\documentclass[12pt]{amsart}
\usepackage{amssymb, amsthm, tikz}
\usepackage{mathtools}
\usepackage{hyperref}
\usepackage{cleveref}
\usepackage{fullpage}
\usepackage{braket}
\usepackage{ytableau}

\newtheorem*{theorem*}{Theorem}
\newtheorem{theorem}{\textbf{Theorem}}[section]
\newtheorem{proposition}[theorem]{\textbf{Proposition}}
\newtheorem{corollary}[theorem]{\textbf{Corollary}}
\newtheorem{lemma}[theorem]{\textbf{Lemma}}

\theoremstyle{definition}

\newtheorem{remark}[theorem]{Remark}
\newtheorem{example}[theorem]{Example}

\newtheorem{assumption*}[theorem]{Assumption}

\numberwithin{equation}{section}

\DeclareMathOperator{\Z}{\mathbb{Z}}
\DeclareMathOperator{\C}{\mathbb{C}}

\DeclareMathOperator{\wt}{\operatorname{wt}}

\DeclareMathOperator{\sva}{\operatorname{a}}
\DeclareMathOperator{\svb}{\operatorname{b}}
\DeclareMathOperator{\svc}{\operatorname{c}}
\DeclareMathOperator{\svd}{\operatorname{d}}

\tikzstyle{path}=[line width=4pt, black, opacity=0.7]
\tikzset{every picture/.style={line width=1pt}}

\title{Free Fermionic Schur Functions}
\author{Slava Naprienko}
\address{Department of Mathematics\\
  University of North Carolina at Chapel Hill\\
  Chapel Hill, North Carolina, 27599, United States of America}
\email[Slava Naprienko]{slava@naprienko.com}

\begin{document}
	
\begin{abstract}
	We introduce a new family of Schur functions $s_{\lambda/\mu;a,b}(x/y)$ that depend on two sets of variables and two sequences of parameters. These \emph{free fermionic Schur functions} generalize and unify double, supersymmetric, and dual Schur functions from literature. These functions have a hidden symmetry that is manifested in the supersymmetric Cauchy identity
    \[
        \sum_{\lambda}s_{\lambda;a,b}(x/y)\widehat{s}_{\lambda;a,b}(z/w) = \prod_{i,j}\frac{1+y_iz_j}{1-x_iz_j}\frac{1+x_iw_j}{1-y_iw_j},
    \]
    where $\widehat{s}_{\lambda;a,b}(z/w) = s_{\lambda';b',a'}(w/z)$ are the dual functions. 
    
    Our approach is based on the integrable six vertex model with free fermionic Boltzmann weights. We show that these weights satisfy the \textit{refined Yang-Baxter equations}, which allows us to prove the generalizations of well-known properties of the Schur functions. We emphasize that some of our results and proofs are novel even in special cases.
\end{abstract}

\maketitle

\tableofcontents

\section{Introduction}
The \emph{Schur functions} $s_\lambda(x)$ are the symmetric functions that occur in various parts of mathematics. In representation theory, they are the characters of the polynomial irreducible representations of the general linear group; they are also the representatives of the characters of the irreducible representations of the symmetric group under the characteristic map. In algebraic geometry, they represent the Schubert cycles in the cohomology ring of Grassmannians. In mathematical physics, they are the polynomial tau functions of the KP hierarchy; they are also the image of the standard basis of the fermionic Fock space under the boson-fermion correspondence. And because of that, they are one of the most fundamental objects in algebraic combinatorics. Naturally, the Schur functions inspired multiple generalizations. 

The \textit{factorial Schur functions} $s_{\lambda}(x | a)$ is one such generalization. This family now depends on a sequence of parameters $(a_i)_{i \in \Z}$. In the special case $a_i = -i+1$, these functions were introduced in \cite{BL89,CL93} inspired by decomposition of tensor products of representations using particular bases. Another variation, with shifted variables, is known as \emph{shifted Schur functions} $s_\lambda^*(x)$ and was introduced and studied by Olshanski and Okounkov in \cite{OO97,OO98}. They form a natural basis for the center of the universal enveloping algebra $U(\mathfrak{gl}_n)$. Another variant, the \textit{double Schur functions} $s_{\lambda}(x \,||\, a)$, was introduced by Okounkov in \cite{Ok98} and developed by Molev in \cite{Mol09}. They differ from the factorial Schur functions only by a reparametrization. Knutson and Tao \cite{KT03} showed that double Schur functions correspond to Schubert cycles in the equivariant cohomology of the Grassmanian. Another application of the double Schur functions is the interpolation of the symmetric functions \cite{Ok98, MS99}.

The \emph{supersymmetric Schur functions} $s_\lambda(x/y)$ is a family of Schur functions coming from representation theory of Lie superalgebra of the general linear group $\mathfrak{gl}(m|n)$ \cite{Ser84, BR87}. For a comprehensive overview and relation to the representation theory of the superalgebra $\mathfrak{gl}(m|n)$, see \cite{M07} and \cite{CW12}. They also are a prominent object in algebraic combinatorics as a plethystic substitution of the classical Schur functions.

Molev \cite{Mol98} simultaneously generalized the factorial and supersymmetric Schur functions by introducing the \textit{factorial supersymmetric Schur functions} $s_\lambda(x/y \,||\, a)$. These functions generalize both the factorial and the supersymmetric Schur functions at the same time. However, Olshanski, Regev, and Vershik \cite{OlRV03} pointed out that Molev's generalization does not have the stability property. They introduced the \textit{Frobenius-Schur functions} $s_{\lambda; a}(x, y)$ as a shifted version of Molev's functions, which do possess the stability property. These functions enter the formula for the combinatorial dimension of a skew Young diagram in terms of the Frobenius coordinates, which plays a key role in the asymptotic character theory of the symmetric groups. When the numbers of variables $x$ and $y$ are equal, the Frobenius-Schur functions differ from Molev's functions only by a shift in the parameters. 

The \emph{dual Schur functions} $\widehat{s}_{\lambda/\mu}(x \,||\, a)$ is another important variation that was originally introduced by Molev in \cite{Mol09}. They are the dual functions to the double Schur functions in the sense of the duality of Hopf algebras; they have been used to study the dual Littlewood-Richardson polynomials, which occur in the comultiplication rules for the double Schur functions. In \cite{LLS21}, Lam, Lee, and Shimozono showed that the dual Schur functions represent the Schubert cycles in the equivariant homology of Grassmannians using the Hopf duality. 

In this paper, we introduce a new family of Schur functions $s_{\lambda/\mu; a,b}(x/y)$ which generalizes and unifies all of the examples above into one family of supersymmetric functions. They depend on a skew diagram $\lambda/\mu$, two sets of variables $x = (x_1,\dots,x_n)$ and $y = (y_1,\dots,y_n)$, and two doubly infinite sequences of parameters $a = (a_i)_{i \in \Z}$ and $b = (b_i)_{i \in \Z}$. These \emph{free fermionic Schur functions} $s_{\lambda/\mu; a,b}(x/y)$ have the following specializations:
\begin{enumerate}
    \item $s_{\lambda/\mu; 0,0}(x/0) = s_{\lambda/\mu}(x)$: classical Schur functions,
    \item $s_{\lambda/\mu;0,0}(x/y) = s_{\lambda/\mu}(x/y)$: supersymmetric Schur functions,
    \item $s_{\lambda/\mu;a,0}(x/a) = s_{\lambda/\mu}(x \,||\, a')$: double Schur functions,
    \item $s_{\lambda/\mu;a,0}(x/y) = s_{\lambda/\mu}(x/y \,||\, a')$: factorial supersymmetric Schur functions,
    \item $s_{\lambda/\mu; a,0}(x/y) = s_{\lambda/\mu; a}(x,y)$: Frobenius-Schur functions,
    \item $s_{\lambda;0,b}(x/0) = \widehat{s}_{\lambda}(x \,||\, b')$: dual Schur functions,
\end{enumerate}
where $a' = (-a_{-i+1})_{i \in \Z}$ is the dual sequence, and $a = 0$ means $a_i = 0$ for all $i \in \Z$. See \Cref{thm:degenerations} for details. 

The free fermionic Schur functions have the following remarkable duality that is manifested in the supersymmetric Cauchy identity: 
\[
	\sum_{\lambda}s_{\lambda; a,b}(x/y)\widehat{s}_{\lambda; a,b}(z/w) = \prod_{i}\frac{1+y_i z_j}{1-x_iz_j}\frac{1+x_i w_j}{1-y_i w_j},
\]
where $\widehat{s}_{\lambda/\mu; a,b}(x/y) = s_{\lambda'/\mu'; b', a'}(y/x)$ are the \emph{dual free fermionic Schur functions}. See \Cref{sec:cauchy} for details. This identity has several noteworthy properties:
\begin{enumerate}
	\item The right-hand side of the identity does not depend on two doubly infinite sequences of parameters $a$ and $b$.
	\item The dual functions use the symmetry in partitions $\lambda$, both sets of variables $x$ and $y$, and both sets of parameters $a$ and $b$. In other words, it is impossible to identify this duality without all of the data involved.
	\item Even in the special case $a = 0$ or $b = 0$, the supersymmetric Cauchy identity is a new result for the factorial supersymmetric Schur functions (Frobenius-Schur functions).
	\item To the best of our knowledge, it is the first generalization of the supersymmetric Cauchy identity beyond the result of Berele and Regev \cite{BR87}. 
\end{enumerate}

In addition to the Cauchy identity, we prove many results that are usually associated with the Schur functions. Among these properties are supersymmetry, combinatorial formulae, generating series for hook functions, the Jacobi-Trudi identity and variations, the Weyl determinant formula, the Berele-Regev factorization, and others. We emphasize that many of these results are novel even in special cases. 

Our approach is based on the integrable six vertex model with free fermionic Boltzmann weights. Specifically, we define the free fermionic Schur functions as the partition function of the six vertex model, where each vertex is assigned two row spectral parameters $x$ and $y$, and two column spectral parameters $a$ and $b$. The weights assigned to vertices of different types are defined as follows:
\[
    \begin{aligned}
        \sva_1(x,y; a, b) &= 1-b x,\\
        \sva_2(x,y; a,b) &= y + a,
    \end{aligned} \quad
    \begin{aligned}
        \svb_1(x,y; a,b) &= 1+b y,\\
        \svb_2(x,y; a,b) &= x - a,
    \end{aligned} \quad
    \begin{aligned}
        \svc_1(x,y; a,b) &= 1-ab,\\
        \svc_2(x,y; a,b) &= x+y.
    \end{aligned}
\]

The central tool in the theory of integrable lattice models is the Yang-Baxter equation. It relates the weights of two vertices by exchanging their row spectral parameters simultaneously. In particular, if $T(x,y; a,b)$ represents a vertex with labels $x,y$ and $a,b$, then the classical Yang-Baxter equation can be written as
\[
    R(x_1,y_1; x_2,y_2)T(x_1,y_1; a,b)T(x_2,y_2; a,b) = T(x_2,y_2; a,b)T(x_1,y_1; a,b)R(x_1,y_1; x_2,y_2).
\]
One of the main novelties of our work is the introduction of new \textit{refined Yang-Baxter equations}, which allow us to exchange the spectral parameters $x$ or $y$ separately:
\begin{align*}
        R^1(x_1,y_1;x_2,y_2)T(x_1,y_1;a,b)T(x_2,y_2; a,b) &= T(x_2,y_1; a,b)T(x_1, y_2; a,b)R^1(x_1,y_1;x_2,y_2),\\    R^2(x_1,y_1;x_2,y_2)T(x_1,y_2;a,b)T(x_2,y_2; a,b) &= T(x_1,y_2; a,b)T(x_2, y_1; a,b)R^2(x_1,y_1;x_2,y_2).
    \end{align*}

The refined Yang-Baxter equations lead to the refined Yang-Baxter algebra of the row transfer operators. This allows us to demonstrate that the free fermionic Schur functions defined in terms of our model are \emph{supersymmetric}, that is, symmetric separately in $x$ and $y$, and satisfy the cancellation property. We remark that it is impossible to get this result using the classical Yang-Baxter equation since it exchanges two parameters $x$ and $y$ at once. 

The free fermionic six vertex model has been previously used to define generalizations of Schur functions. In the pioneer work \cite{BBF11}, the classical Schur functions were given in terms of the free fermionic lattice model. In \cite{BMN14}, the result was extended to the factorial Schur functions. In \cite{Mot17ik}, a more general family of Schur functions was defined using a specific parametrization of weights. In \cite{ABPW21}, Aggarwal, Borodin, Petrov, and Wheeler studied the partition functions of the free fermionic six vertex models with a different parametrization of the weights and showed that the partition functions specialize to the factorial Schur functions and supersymmetric Schur functions. It is possible to relate the weights from \cite{ABPW21} to our weights through a sequence of reparametrizations, which means that many results are analogous to each other.

However, we remark that only with the help of the refined Yang-Baxter equations, we can give a natural proof that the resulting free fermionic Schur functions are supersymmetric. Moreover, with our choices of parametrization, normalization, and shift of parameters, these functions are stable under specialization of variables and thus can be seen as the functions in infinitely many variables as elements of the inverse limit of a graded ring of the supersymmetric functions. Furthermore, our parametrization unifies the factorial supersymmetric Schur functions and the dual Schur functions from Molev's work \cite{Mol09}. This unification is a novel and unexpected result. Even in the special case $a = 0$, the resulting functions are a novel family of the dual supersymmetric Schur functions which extend the approach developed by Molev. By unifying these various types of Schur functions and establishing their stability and supersymmetry, we provide a uniform approach to study all of these functions using the free fermionic six vertex model as our main tool.

\bigskip

\textbf{Acknowledgements.} 
I would like to express my sincere gratitude to Daniel Bump for his invaluable support, guidance, and mentorship throughout the course of this project. I also thank Grigori Olshanski for suggesting the mentioning the dual Cauchy identity. I also thank Anne Schilling and Leonid Petrov for helpful discussions. I am also deeply grateful to the referee for their careful reading and numerous helpful suggestions.

\section{Preliminaries}
We begin by reviewing some standard notation and terminology related to partitions, following Chapter I of \cite{Mac95}. We also introduce the double shifted powers.

A partition $\lambda = (\lambda_1,\lambda_2,\dots,\lambda_n)$ is a non-increasing sequence of non-negative integers. We use Frobenius notation to express partitions as $\lambda = (p | q) = (p_1, p_2, \dots, p_d | q_1, q_2, \dots, q_d)$, where $p_i = \lambda_i - i$ and $q_i = \lambda_i' - i$, and $d$ is the number of boxes in the diagonal of the corresponding Young diagram. The conjugate partition of $\lambda$ is denoted by $\lambda'$, and in Frobenius notation, it is given by $\lambda' = (q | p)$.

Let $\lambda/\mu$ be a skew diagram obtained by removing the boxes of the partition $\mu$ from the partition $\lambda$. A box $\alpha = (i,j) \in \lambda/\mu$ has a left neighbor if $(i,j-1) \in \lambda/\mu$, a right neighbor if $(i,j+1) \in \lambda/\mu$, a top neighbor if $(i-1,j) \in \lambda/\mu$, and a bottom neighbor if $(i+1,j) \in \lambda/\mu$. The content $c(\alpha) = j-i$ of a box $\alpha = (i,j)$ in $\lambda/\mu$ is the difference between its row and column indices.

A connected skew diagram $\lambda/\mu$ that contains no $2 \times 2$ blocks of squares is called a \emph{ribbon} or a \emph{skew hook}. Any skew diagram with no $2 \times 2$ blocks of squares is a disjoint union of ribbons. In a ribbon, every box can have either a left or bottom neighbor, but not both. A box in a ribbon can have neither a left nor a bottom neighbor if it is the southwestmost box. In a union of ribbons, every box either has a right or a top neighbor, but not both. A box in a ribbon can have neither a right nor a top neighbor if it is the northeastmost box. Also, in a ribbon, every box has distinct content.

Let $R(\lambda/\mu)$ be the set of ribbons in a skew diagram $\lambda/\mu$ that has no $2 \times 2$ blocks of boxes. 

Let $a = (a_i)_{i \in \Z}$ be a sequence of indeterminates or complex numbers. We write $a' = (-a_{-i+1})_{i \in \Z}$ for the dual sequence. Note that the operation defining $a'$ is an involution. We also often write $a = 0$ to mean $a_i = 0$ for all $i \in \Z$. Let $x = (x_1,\dots,x_n)$ be a finite sequence of indeterminates. Then, when we write $x = a$, we mean that $x_i = a_i$ for all $1 \leq i \leq n$.

We define the shifted powers $(x | a)^0 = 1$, $(x ; a)^0 = 1$, and for $k > 0$, we define $(x | a)^k$ and $(x;a)^k$ as follows:
\begin{align}
	(x|a)^k &= (x-a_1)(x-a_2)\dots (x-a_k),\\
	(x|a)^{-k} &= \frac{1}{(x-a_0)(x-a_{-1})\dots (x-a_{-k+1})},\\
	(x;a)^k &= (1-a_1 x)(1-a_2 x)\dots (1-a_k x),\\
	(x;a)^{-k} &= \frac{1}{(1-a_0x)(1-a_{-1}x)\dots (1-a_{-k+1}x)}.
\end{align}

Let $a = (a_i)_{i \in \Z}$ and $b = (b_i)_{i \in \Z}$ be two sequences of indeterminates or complex numbers. We define the double shifted powers $(x | a,b)^k$ by
\begin{equation}
	(x | a,b)^k = \frac{(x|a)^k}{(x;b)^k}, \quad k \in \Z.
\end{equation}

Note that $(x|a,b)^{-k} = 1/(x | -a',-b')^k$ and $(x^{-1} | a, b)^k = (x | -b', -a')^{-k}$. 

\section{The six vertex model}

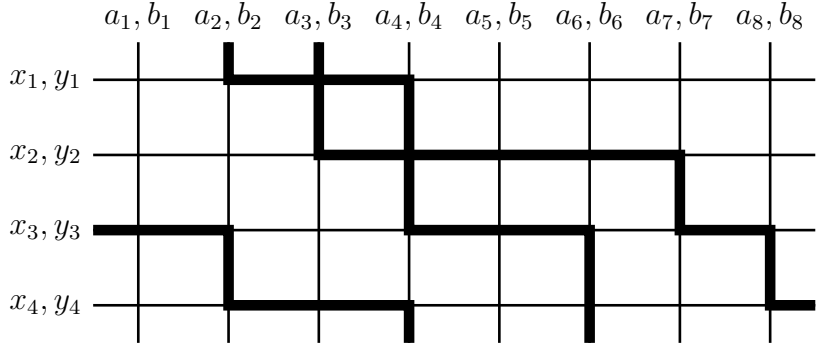
\begin{figure}
    \centering
    \begin{tikzpicture}[xscale=0.6, yscale=0.5]
        \foreach \x in {1,3,5,7} {
            \draw (0,\x) -- (16,\x);
            \pgfmathtruncatemacro{\index}{5-(\x+1)/2}
            \node [left] at (0,\x) {$x_\index, y_\index$};
        }
        \foreach \y in {1,3,5,7,9,11,13,15} {
            \draw (\y,0) -- (\y,8);
            \pgfmathtruncatemacro{\index}{(\y+1)/2}
            \node [above] at (\y,8) {$a_\index, b_\index$};
        }
        
        \draw[path] (0,3) -- (1,3) -- (3,3) -- (3,1) -- (7,1) -- (7,0);
        
        \draw[path] (3,8) -- (3,7) -- (7,7) -- (7,3) -- (11,3) -- (11,1) -- (11,0);
        
        \draw[path] (5,8) -- (5,5) -- (13,5) -- (13,3) -- (15,3) -- (15,1) -- (16, 1);
    \end{tikzpicture}
    \caption{A typical state in the six vertex model.}
    \label{fig:svmodel}
\end{figure}

\begin{figure}
    \centering
    \[
    \begin{array}{cccccc}
        \begin{tikzpicture}[scale=0.75]
            \draw (0,0) -- (2,0);
            \draw (1,-1) -- (1,1);
        \end{tikzpicture} &
        \begin{tikzpicture}[scale=0.75]
            \draw (0,0) -- (2,0);
            \draw (1,-1) -- (1,1);
            \draw[path] (0,0) -- (2,0);
            \draw[path] (1,-1) -- (1,1);
        \end{tikzpicture} &
        \begin{tikzpicture}[scale=0.75]
            \draw (0,0) -- (2,0);
            \draw (1,-1) -- (1,1);
            \draw[path] (1,-1) -- (1,1);
        \end{tikzpicture} &
        \begin{tikzpicture}[scale=0.75]
            \draw (0,0) -- (2,0);
            \draw (1,-1) -- (1,1);
            \draw[path] (0,0) -- (2,0);
        \end{tikzpicture} &
        \begin{tikzpicture}[scale=0.75]
            \draw (0,0) -- (2,0);
            \draw (1,-1) -- (1,1);
            \draw[path] (1,1) -- (1,0) -- (2,0);
        \end{tikzpicture} &
        \begin{tikzpicture}[scale=0.75]
            \draw (0,0) -- (2,0);
            \draw (1,-1) -- (1,1);
            \draw[path] (0,0) -- (1,0) -- (1,-1);
        \end{tikzpicture} \\
        \sva_1 & \sva_2 & \svb_1 & \svb_2 & \svc_1 &  \svc_2
    \end{array}
    \]
    \caption{The six admissible types of vertices. The types of vertices are traditionally called $\sva_1,\sva_2,\svb_1,\svb_2,\svc_1,\svc_2$, following Baxter \cite{Bax82}}
    \label{fig:sixtypes}
\end{figure}
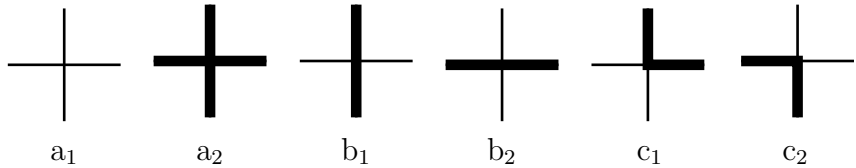

Let us review the main object of the paper -- the six vertex model. 

The six vertex model is a combinatorial system on a rectangular grid like the one in \Cref{fig:svmodel}. The grid consists of vertices and edges. The edges can be of two types: the internal edges which are adjacent to two vertices, and external edges which are adjacent only to one vertex. Each edge can be either occupied, in which case we draw it by a thick line, or unoccupied, in which case we draw it by thin line. Each edge can carry at most one path. In the six vertex model, we allow only the six types of vertices out of sixteen to occur, they are shown in \Cref{fig:sixtypes}. Because of this restriction, the six vertex model bears its name. It is convenient to think of the six vertex model as a lattice path system, where paths enter the grid from the left and from the top, leave the grid on the right and bottom. Note that the paths can intersect, but they can move only right and down. Because of the vertices of type $\sva_2$, the paths in the model can intersect.

An (admissible) \emph{state} in the six vertex model is an arrangement of occupied and unoccupied edges such that only the allowed six types of vertices occur. See \Cref{fig:svmodel} for an example of a state. The word ``admissible'' is a linguistic convenience to separate the valid states where only allowed six vertices happen from the invalid states. Typically, we fix the boundary edges, and consider all possible states of the internal edges. With fixed boundaries, there are only finitely admissible states, which gives a well-defined combinatorial device. 

Usually, the rows and columns of six vertex model are decorated with so-called \emph{spectral parameters} which are some independent indeterminates or complex numbers. In this paper, we decorate each row of the six vertex model with a pair $(x,y)$ and each column with a pair $(a,b)$. We will always use letters $x,y,z,w$ for the row parameters, and letters $a,b$ for the column parameters. 

Now we assign weight to each admissible state. The weight $\wt(s)$ of a state  $s$ is the product of weights of its vertices. Now, to define the weight $\wt(v)$ of a vertex $v$, we equip ourselves with the following six vertex weight functions which depends on the spectral parameters of the row and the column where the vertex occurred.
\begin{equation}\label{eq:ffweights}
    \begin{aligned}
        \sva_1(x,y; a, b) &= 1-b x,\\
        \sva_2(x,y; a,b) &= y + a,
    \end{aligned} \quad
    \begin{aligned}
        \svb_1(x,y; a,b) &= 1+b y,\\
        \svb_2(x,y; a,b) &= x - a,
    \end{aligned} \quad
    \begin{aligned}
        \svc_1(x,y; a,b) &= 1-ab,\\
        \svc_2(x,y; a,b) &= x+y.
    \end{aligned}
\end{equation}
Then a vertex $v$ of type $\sva_1$ that occurred on a row and column with spectral parameters $(x,y)$ and $(a,b)$, respectively, has the weight $\wt(v) = \sva_1(x,y; a,b)$, and so on. 

Finally, for a six vertex model with fixed boundaries, the \emph{partition function} is the sum of weights of all admissible states respecting the boundaries:
\[
	Z = \sum_{s}\wt(s) = \sum_{s}\prod_{v \in s}\wt(v),
\]
where $s$ are the admissible states in the model and $v$ are the vertices in state $s$. 

From the combinatorial point of view, one of the main objectives in the study of integrable lattice models is to identify appropriate weights that result in meaningful and useful partition functions. The six vertex model, for example, has been shown to produce a variety of special functions depending on the choice of weights used. With one set of weights, the model generates the number of alternating sign matrices \cite{Kup96}. With another, it produces Schur functions, and more generally, spherical Whittaker functions for the general linear group over a non-archimedean local field by means of the Casselman-Shalika formula \cite{BBF11}. Using the weights which depends on both rows and columns, one can get the factorial Schur functions \cite{BMN14}. Additionally, using yet another set of weights, the six vertex model generates supersymmetric Schur functions \cite{Har21} and is related to Hamiltonian operators. Recently, by using more general weights, it has been demonstrated that the six vertex model can produce various generalizations of Schur functions \cite{Mot17ik,Mot17,ABPW21}. See also \cite{ZJ09} for the relations of the six vertex model with other combinatorial objects.

A six vertex model is called \emph{free fermionic} if the weights $\sva_1,\sva_2,\svb_1,\svb_2,\svc_1,\svc_2$ satisfy 
\begin{equation}
	\sva_1 \sva_2 + \svb_1 \svb_2 = \svc_1 \svc_2
\end{equation}
for all vertices in the model. The weights \eqref{eq:ffweights} are free fermionic because we have
	\begin{equation}
		(1-b x)(y+a) + (1+by)(x-a) = (1-ab)(x+y).
	\end{equation}
Free fermionic weights provide the six vertex model with two extra features. First, such six vertex model can be interpreted in terms of the non-intersecting lattice paths. We develop this theory in \Cref{sec:lgvforsv}. Second, the free fermionic weights have a larger space of solutions of the Yang-Baxter equation. See \cite{BBF11, N22} for details. 

\subsection{Row transfer operators}
One poweful powerful way to study the combinatorics of the six vertex model is by means of liner algebra. It gives yet another example of the common belief that \emph{math is the art of reducing problems to linear algebra}. 

Let us consider one vertex with row parameters $(x,y)$ and column parameters $(a,b)$:
\[
	\begin{tikzpicture}
        \draw[dotted] (0,0) -- (2,0);
        \draw[dotted] (1,-1) -- (1,1);
        \node [above] at (0,0) {\small$x,y$};
        \node [above] at (1,1) {\small$a,b$};
    \end{tikzpicture}
\]

We associate with the row a two-dimensional vector space $V(x,y)$ and with the column a two dimensional vector space $W(a,b)$. Both vector spaces have a distinguished basis $e_0, e_1$, where $e_0$ corresponds to the unoccupied edge, and $e_1$ corresponds to an occupied edge. 

Then we can see the vertex as an operator 
\[
	T(x,y; a,b)\colon V(x,y) \otimes W(a,b) \to W(a,b) \otimes V(x,y)
\]
given explicitly in the standard basis $e_0 \otimes e_0, e_0 \otimes e_1, e_1 \otimes e_0, e_1 \otimes e_1$ by the matrix
\[
	\begin{pmatrix}
        \sva_1(x,y; a,b) & & & \\
        & \svc_1(x,y;a,b)& \svb_1(x,y; a,b) &\\
        & \svb_2(x,y; a,b) & \svc_2(x,y; a,b) & \\
        & & & \sva_2(x,y; a,b)
    \end{pmatrix} = \begin{pmatrix}
        1-b x & & & \\
        & 1-a b & 1+b y &\\
        & x-a & x+y & \\
        & & & y+a
    \end{pmatrix}.
\]
Now we can interpret the weights of the admissible vertices as the matrix elements of the operator $T(x,y; a,b)$. For example,
\[
	\begin{tikzpicture}[baseline={([yshift=-2ex]current bounding box.center)}]
            \draw (0,0) -- (2,0);
            \draw (1,-1) -- (1,1);
            \draw[path] (1,-1) -- (1,1);
        \node [above] at (0,0) {\small$V(x,y)$};
        \node [above] at (1,1) {\small$W(a,b)$};
        \end{tikzpicture}= \braket{e_0 \otimes e_1 | T(x,y; a,b) | e_1 \otimes e_0} = \svb_1(x,y; a,b) = 1+by.
\]
Here and everywhere in text, we use the braket notation $\braket{v | T | w}$ to represent the matrix coefficient of the operator $T$, vector $v$ and the dual vector $w$. We always assume that we chose the dual basis corresponding to the distinguished basis $e_0,e_1$ of all two-dimensional vector spaces that occur.

Now that we interpret a vertex as an operator, we will consider an entire row in the six vertex model with row parameters $(x,y)$ and column parameters $(a_1,b_1), \dots, (a_m, b_m)$:
\[
   \begin{tikzpicture}[dotted, scale=0.8]
        \draw (0,0) -- (16,0);
        \draw (1,-1) -- (1,1);
        \draw (3,-1) -- (3,1);
        \draw (5,-1) -- (5,1);
        \draw (7,-1) -- (7,1);
        \draw (9,-1) -- (9,1);
        \draw (11,-1) -- (11,1);
        \draw (13,-1) -- (13,1);
        \draw (15,-1) -- (15,1);
        \node [above] at (0,0) {\small$x,y$};
        \node [above] at (1,1) {\small$a_1,b_1$};
        \node [above] at (3,1) {\small$a_2,b_2$};
        \node [above] at (5,1) {\small$a_3,b_3$};
        \node [above] at (7,1) {\small$a_4,b_4$};
        \node [above] at (9,1) {\small$a_5,b_5$};
        \node [above] at (11,1) {\small$a_6,b_6$};
        \node [above] at (13,1) {\small$a_7,b_7$};
        \node [above] at (15,1) {\small$a_8,b_8$};
    \end{tikzpicture} 
\]

We again associate a vector space $V(x,y)$ to the row with spectral parameters $(x,y)$, and to each column with spectral parameters $(a_k, b_k)$, we associate the vector space $W(a_k,b_k)$. Let $a = (a_1,\dots,a_m)$ and $b = (b_1,\dots,b_m)$. Then we associate a vector space $W(a,b)$ with the entire space of columns by $W(a,b) = W(a_1,b_1) \otimes W(a_2,b_2) \otimes \dots \otimes W(a_m,b_m)$.  Then the row in the six vertex model can be represented by an operator
\[
	T(x,y; a,b) = T(x,y; a_1, b_1)T(x,y; a_2, b_2)\dots T(x,y; a_m, b_m),
\]
from $V(x,y) \otimes W(a,b)$ to $W(a,b) \otimes V(x, y)$. In this notation, we assume that the operator $T(x,y; a_k, b_k)$ acts on the parametrized spaces $V(x,y) \otimes W(a_k, b_k)$ by itself, and by identity everywhere else. For example, the weight of the row
\[
   \begin{tikzpicture}[scale=0.8]
        \draw (0,0) -- (12,0);
        \draw (1,-1) -- (1,1);
        \draw (3,-1) -- (3,1);
        \draw (5,-1) -- (5,1);
        \draw (7,-1) -- (7,1);
        \draw (9,-1) -- (9,1);
        \draw (11,-1) -- (11,1);
        \node [above] at (0,0) {\small$x,y$};
        \node [above] at (1,1) {\small$a_1,b_1$};
        \node [above] at (3,1) {\small$a_2,b_2$};
        \node [above] at (5,1) {\small$a_3,b_3$};
        \node [above] at (7,1) {\small$a_4,b_4$};
        \node [above] at (9,1) {\small$a_5,b_5$};
        \node [above] at (11,1) {\small$a_6,b_6$};
        
        \draw[path] (0,0) -- (1,0) -- (1, -1);
        \draw[path] (5,1) -- (5,0) -- (7, 0) -- (7, -1);
        \draw[path] (9, 1) -- (9,0) -- (12, 0);
    \end{tikzpicture} 
\]
is equal to 
\begin{multline*}
	\braket{e_1 \otimes (e_0 \otimes e_0 \otimes e_1 \otimes e_0 \otimes e_1 \otimes e_0) | T(x,y; a,b) | (e_1 \otimes e_0 \otimes e_0 \otimes e_1 \otimes e_0 \otimes e_0) \otimes e_1} = \\
	= \svc_2(x,y; a_1,b_1)\sva_1(x,y; a_2,b_2)\svc_1(x,y; a_3,b_3)\svc_2(x,y; a_4,b_4)\svc_1(x,y; a_5, b_5)\svb_2(x,y; a_6, b_6).
\end{multline*}

Of course, by stacking together the row operators with different row spectral parameters $(x_1,y_1), \dots, (x_n, y_n)$, we get the operator representing the entire six vertex model. However, we will not need the operator for the entire model, so we omit the details. 

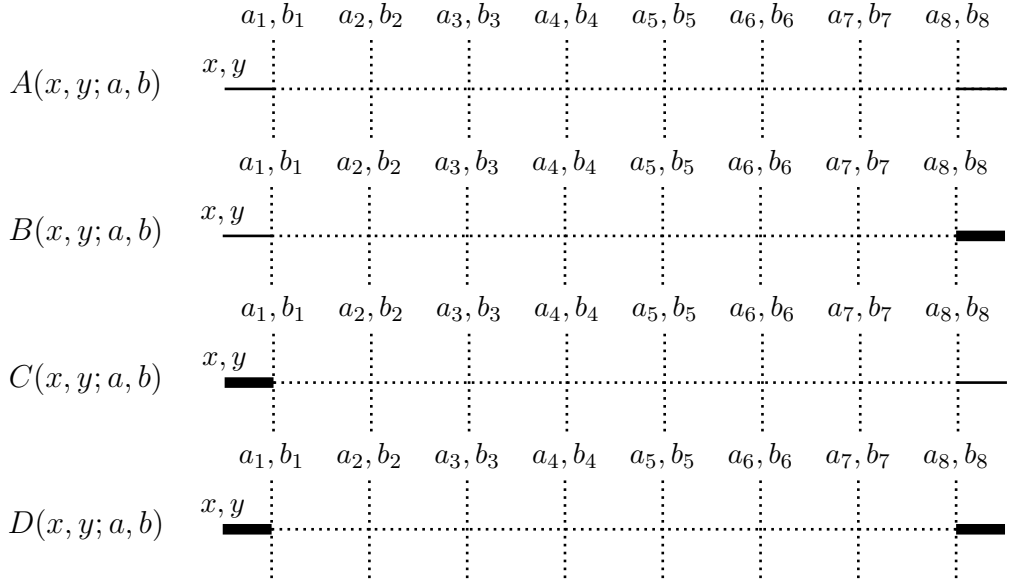
\begin{figure}
    \centering
    \[
        \begin{array}{cc}
        A(x,y; a,b) & \begin{tikzpicture}[baseline={([yshift=-2ex]current bounding box.center)}, scale=0.65]
        \draw[dotted] (1,0) -- (16,0);
        \draw[dotted] (1,-1) -- (1,1);
        \draw[dotted] (3,-1) -- (3,1);
        \draw[dotted] (5,-1) -- (5,1);
        \draw[dotted] (7,-1) -- (7,1);
        \draw[dotted] (9,-1) -- (9,1);
        \draw[dotted] (11,-1) -- (11,1);
        \draw[dotted] (13,-1) -- (13,1);
        \draw[dotted] (15,-1) -- (15,1);
        \draw (0, 0) -- (1, 0);
        \draw (15, 0) -- (16, 0);
        \node [above] at (0,0) {\small$x,y$};
        \node [above] at (1,1) {\small$a_1,b_1$};
        \node [above] at (3,1) {\small$a_2,b_2$};
        \node [above] at (5,1) {\small$a_3,b_3$};
        \node [above] at (7,1) {\small$a_4,b_4$};
        \node [above] at (9,1) {\small$a_5,b_5$};
        \node [above] at (11,1) {\small$a_6,b_6$};
        \node [above] at (13,1) {\small$a_7,b_7$};
        \node [above] at (15,1) {\small$a_8,b_8$};
    \end{tikzpicture}  \\
        B(x,y; a,b) & \begin{tikzpicture}[baseline={([yshift=-2ex]current bounding box.center)}, scale=0.65]
        \draw[dotted] (1,0) -- (15,0);
        \draw[dotted] (1,-1) -- (1,1);
        \draw[dotted] (3,-1) -- (3,1);
        \draw[dotted] (5,-1) -- (5,1);
        \draw[dotted] (7,-1) -- (7,1);
        \draw[dotted] (9,-1) -- (9,1);
        \draw[dotted] (11,-1) -- (11,1);
        \draw[dotted] (13,-1) -- (13,1);
        \draw[dotted] (15,-1) -- (15,1);
        \draw (0, 0) -- (1, 0);
        \draw[path] (15,0) -- (16,0);
        \node [above] at (0,0) {\small$x,y$};
        \node [above] at (1,1) {\small$a_1,b_1$};
        \node [above] at (3,1) {\small$a_2,b_2$};
        \node [above] at (5,1) {\small$a_3,b_3$};
        \node [above] at (7,1) {\small$a_4,b_4$};
        \node [above] at (9,1) {\small$a_5,b_5$};
        \node [above] at (11,1) {\small$a_6,b_6$};
        \node [above] at (13,1) {\small$a_7,b_7$};
        \node [above] at (15,1) {\small$a_8,b_8$};
    \end{tikzpicture} \\
        C(x,y; a,b) & \begin{tikzpicture}[baseline={([yshift=-2ex]current bounding box.center)}, scale=0.65]
        \draw[dotted] (1,0) -- (15,0);
        \draw[dotted] (1,-1) -- (1,1);
        \draw[dotted] (3,-1) -- (3,1);
        \draw[dotted] (5,-1) -- (5,1);
        \draw[dotted] (7,-1) -- (7,1);
        \draw[dotted] (9,-1) -- (9,1);
        \draw[dotted] (11,-1) -- (11,1);
        \draw[dotted] (13,-1) -- (13,1);
        \draw[dotted] (15,-1) -- (15,1);
        \draw[path] (0,0) -- (1,0);
        \draw (15,0) -- (16,0);
        \node [above] at (0,0) {\small$x,y$};
        \node [above] at (1,1) {\small$a_1,b_1$};
        \node [above] at (3,1) {\small$a_2,b_2$};
        \node [above] at (5,1) {\small$a_3,b_3$};
        \node [above] at (7,1) {\small$a_4,b_4$};
        \node [above] at (9,1) {\small$a_5,b_5$};
        \node [above] at (11,1) {\small$a_6,b_6$};
        \node [above] at (13,1) {\small$a_7,b_7$};
        \node [above] at (15,1) {\small$a_8,b_8$};
    \end{tikzpicture} \\
        D(x,y; a,b) & \begin{tikzpicture}[baseline={([yshift=-2ex]current bounding box.center)}, scale=0.65]
        \draw[dotted] (1,0) -- (15,0);
        \draw[dotted] (1,-1) -- (1,1);
        \draw[dotted] (3,-1) -- (3,1);
        \draw[dotted] (5,-1) -- (5,1);
        \draw[dotted] (7,-1) -- (7,1);
        \draw[dotted] (9,-1) -- (9,1);
        \draw[dotted] (11,-1) -- (11,1);
        \draw[dotted] (13,-1) -- (13,1);
        \draw[dotted] (15,-1) -- (15,1);
        \draw[path] (0,0) -- (1,0);
        \draw[path] (15,0) -- (16,0);
        \node [above] at (0,0) {\small$x,y$};
        \node [above] at (1,1) {\small$a_1,b_1$};
        \node [above] at (3,1) {\small$a_2,b_2$};
        \node [above] at (5,1) {\small$a_3,b_3$};
        \node [above] at (7,1) {\small$a_4,b_4$};
        \node [above] at (9,1) {\small$a_5,b_5$};
        \node [above] at (11,1) {\small$a_6,b_6$};
        \node [above] at (13,1) {\small$a_7,b_7$};
        \node [above] at (15,1) {\small$a_8,b_8$};
    \end{tikzpicture}
    \end{array}
    \]
    \caption{The row transfer operators.}
    \label{fig:rowtransferoperators}
\end{figure}

It turns out to be convenient to define the row transfer operators
\[
	A(x,y), B(x,y), C(x,y), D(x,y)\colon W(a; b) \to W(a; b),
\]
 which are the operators on the space of the columns when the boundaries for the row boundary edges are fixed. See \Cref{fig:rowtransferoperators} for illustration. In terms of the row operator $T(x,y; a,b)$, the row transfer operators can be defined by their matrix coefficients:
\begin{align*}
    \langle v | A(x,y) | w \rangle &= \langle e_0 \otimes v | T(x,y; a,b) | w \otimes e_0 \rangle,\\
    \langle v | B(x,y) | w \rangle &= \langle e_1 \otimes v | T(x,y;a,b) | w \otimes e_0 \rangle,\\
    \langle v | C(x,y) | w \rangle &= \langle e_1 \otimes v | T(x,y;a,b) | w \otimes e_0 \rangle,\\
    \langle v | D(x,y) | w \rangle &= \langle e_1 \otimes v | T(x,y; a,b) | w \otimes e_1 \rangle.
\end{align*}

Note that we suppress the dependence on $a,b$ from notation because the operators are defined equally well for one or multiple columns with any column parameters. 

\begin{lemma}\label{lem:vanishing}
	We have $\braket{v | A(t,-t) | w} = 0$ unless $v = w$.
\end{lemma}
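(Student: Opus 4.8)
The plan is to exploit the vanishing of a single Boltzmann weight under the specialization $(x,y) = (t,-t)$. First I would evaluate the six weights \eqref{eq:ffweights} at $x = t$, $y = -t$ and record the one relation that matters: $\svc_2(t,-t;a,b) = t + (-t) = 0$, whereas the other five weights are generically nonzero. By definition, $\langle v | A(t,-t) | w\rangle$ is the partition function of a single row whose left and right horizontal boundary edges are both unoccupied (both $e_0$), with the top edges prescribed by $w$ and the bottom edges by $v$, the sum being taken over the internal horizontal edges. Recall that $\svc_2$ is exactly the vertex in which a horizontal path enters from the left and turns downward, i.e.\ the unique vertex type whose left edge is occupied while its right edge is empty.

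Next I would run a conservation (path-counting) argument along the row. Label the horizontal edges $h_0, h_1, \dots, h_m \in \{0,1\}$ from left to right, so that the fixed boundary gives $h_0 = h_m = 0$. Inspecting \Cref{fig:sixtypes}, among the six admissible vertices only $\svc_1$ increases the horizontal occupation ($h_{k-1}=0 \to h_k = 1$) and only $\svc_2$ decreases it ($h_{k-1}=1 \to h_k = 0$), while the remaining four preserve it. Since $\svc_2(t,-t;a,b) = 0$, every state of nonzero weight avoids $\svc_2$, so the sequence $(h_k)$ is non-decreasing; being non-decreasing with $h_0 = h_m = 0$ forces $h_k \equiv 0$. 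In particular no $\svc_1$ vertex can occur either.

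Finally, with every horizontal edge unoccupied, the only admissible vertex types are $\sva_1$ (all edges empty) and $\svb_1$ (vertical edge occupied, horizontal empty), because $\sva_2, \svb_2, \svc_1, \svc_2$ each require at least one occupied horizontal edge. Both $\sva_1$ and $\svb_1$ have their top and bottom edges equal, so in any nonzero-weight state the bottom configuration $v$ agrees column-by-column with the top configuration $w$. Hence $\langle v | A(t,-t) | w\rangle = 0$ whenever $v \neq w$, as claimed. I do not anticipate a genuine obstacle here: the whole argument is driven by the single observation that the vanishing weight $\svc_2$ is precisely the unique ``turn-down'' vertex, after which the boundary conditions rigidify the horizontal flow and the conclusion follows with no computation.
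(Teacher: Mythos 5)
Your proof is correct and follows essentially the same route as the paper's: the single observation that $\svc_2(t,-t;a,b) = t + (-t) = 0$ kills every state containing a turn-down vertex, after which the empty left and right boundaries of $A$ force the state to be frozen with $v = w$. The paper states this in one line, while you make explicit the horizontal-edge conservation argument (only $\svc_1$ raises and only $\svc_2$ lowers the occupation) that the paper leaves implicit; this is a faithful elaboration, not a different method.
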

\begin{proof}
	When $x = y$, the vertices of type $\svc_2$ have weight $0$, so other states can't occur.
\end{proof}

It is convenient to introduce a ``swap operator'' $P$ defined by $P(v \otimes w) = w \otimes v$ and extended by linearity. Then we define the monodromy matrix $M(x,y; a,b) = T(x,y; a,b)P$ when $a,b$ are single parameters, and 
\[
	M(x,y; a,b) = M(x,y; a_1,b_1)\dots M(x,y; a_m, b_m),
\]
when $a = (a_1,\dots,a_m)$ and $b=(b_1,\dots,b_m)$. Then we have
\begin{equation}
	M(x,y) = \begin{pmatrix}
		A(x,y) & B(x,y) \\
		C(x,y) & D(x,y)
	\end{pmatrix},
\end{equation}
where we again suppressed the dependence on the column parameters. 

The monodromy matrix is a convenient matrix to keep track of all four row transfer operators at the same time. In \Cref{sec:infinitesv}, we show that the monodromy matrix is more suitable for the extension to the case of infinitely many columns than the operator $T(x,y;a,b)$.

When the six vertex model is represented by the row transfer operators, the partition functions become the matrix coefficients of the row transfer operators on the standard basis elements. For example, the partition function of the six vertex model with boundaries from \Cref{fig:svmodel} can be represented in the operator form as the matrix coefficient
\[
	\braket{v | A(x_1,y_1)A(x_2,y_2)C(x_3,y_3)B(x_4, y_4) | w },
\]
where $v = e_0 \otimes e_1 \otimes e_1 \otimes e_0 \otimes e_0 \otimes e_0 \otimes e_0 \otimes e_0$ and $w = e_0 \otimes e_0 \otimes e_0 \otimes e_1 \otimes e_0 \otimes e_1 \otimes e_0 \otimes e_0$.

While the language of the row transfer operators is convenient for the analysis of the partition functions and writing functional relations, the combinatorial nature of the six vertex model is somewhat lost behind the language of the operators. Some of the properties of the six vertex model can be better understood in terms of the operators, and some -- in terms of the combinatorics of the lattice model, see \Cref{sec:lgvforsv} for an example. 

\subsection{Dual six vertex model}\label{sec:dualsv}
In addition to the standard six vertex model, where the paths travel down and right, we need a dual version of the model where paths travel down and left. See \Cref{fig:dualmodel} for illustration. The regular six vertex model and the dual version correspond to the gamma-ice and delta-ice from \cite{BBF11}. 

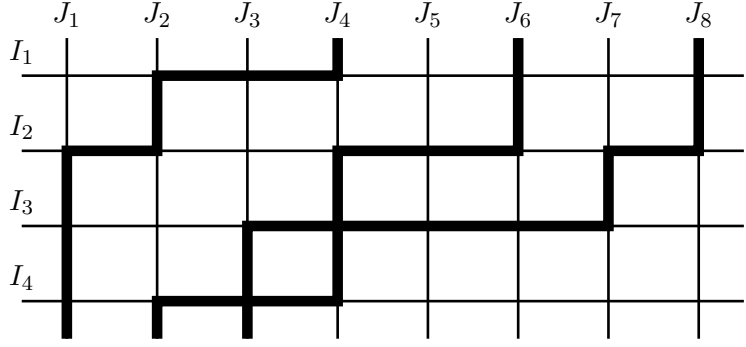
\begin{figure}
    \centering
    \begin{tikzpicture}[xscale=0.6, yscale=0.5]
        \foreach \x in {1,3,5,7} {
            \draw (0,\x) -- (16,\x);
            \pgfmathtruncatemacro{\index}{5-(\x+1)/2}
            \node [above] at (0,\x) {\small $I_\index$};
        }
        \foreach \y in {1,3,5,7,9,11,13,15} {
            \draw (\y,0) -- (\y,8);
            \pgfmathtruncatemacro{\index}{(\y+1)/2}
            \node [above] at (\y,8) {\small $J_\index$};
        }
        
        \draw[path] (1,0) -- (1,5) -- (3,5) -- (3,7) -- (7,7) -- (7,8);
        
        \draw[path] (3,0) -- (3,1) -- (7,1) -- (7,5) -- (11,5) -- (11,7) -- (11,8);
        
        \draw[path] (5,0) -- (5,3) -- (13,3) -- (13,5) -- (15,5) -- (15,8);
    \end{tikzpicture}
    \caption{A typical state in the dual six vertex model.}
    \label{fig:dualmodel}
\end{figure}

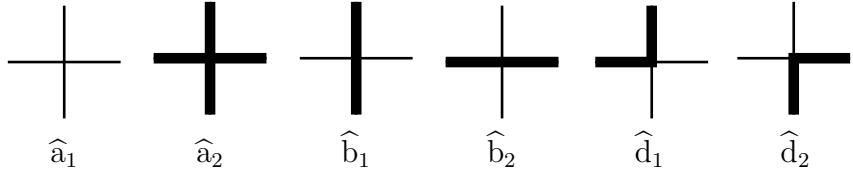
\begin{figure}
    \centering
    \[
    \begin{array}{cccccc}
        \begin{tikzpicture}[scale=0.75]
            \draw (0,0) -- (2,0);
            \draw (1,-1) -- (1,1);
        \end{tikzpicture} &
        \begin{tikzpicture}[scale=0.75]
            \draw (0,0) -- (2,0);
            \draw (1,-1) -- (1,1);
            \draw[path] (0,0) -- (2,0);
            \draw[path] (1,-1) -- (1,1);
        \end{tikzpicture} &
        \begin{tikzpicture}[scale=0.75]
            \draw (0,0) -- (2,0);
            \draw (1,-1) -- (1,1);
            \draw[path] (1,-1) -- (1,1);
        \end{tikzpicture} &
        \begin{tikzpicture}[scale=0.75]
            \draw (0,0) -- (2,0);
            \draw (1,-1) -- (1,1);
            \draw[path] (0,0) -- (2,0);
        \end{tikzpicture} &
        \begin{tikzpicture}[scale=0.75]
            \draw (0,0) -- (2,0);
            \draw (1,-1) -- (1,1);
            \draw[path] (1,1) -- (1,0) -- (0,0);
        \end{tikzpicture} &
        \begin{tikzpicture}[scale=0.75]
            \draw (0,0) -- (2,0);
            \draw (1,-1) -- (1,1);
            \draw[path] (2,0) -- (1,0) -- (1,-1);
        \end{tikzpicture} \\
        \widehat{\sva}_1 & \widehat{\sva}_2 & \widehat{\svb}_1 & \widehat{\svb}_2 & \widehat{\svd}_1 & \widehat{\svd}_2
    \end{array}
    \]
    \caption{The six admissible types of vertices of the dual six vertex model. The types of vertices are traditionally called $\widehat{\sva}_1,\widehat{\sva}_2,\widehat{\svb}_1,\widehat{\svb}_2,\widehat{\svd}_1,\widehat{\svd}_2$, following the special case of the eight vertex model from \cite{Bax82}}
    \label{fig:dualsixtypes}
\end{figure}

Because of the change of the direction of paths, the six admissible types of vertices change such that $\svc_1$ and $\svc_2$ turn into the types $\widehat{\svd}_1$ and $\widehat{\svd}_2$. To make the difference between the vertices at all times, when we discuss the dual six vertex model, we ornament all types with the hats. In, particular, the six types of the dual six vertex models are shown in \Cref{fig:dualsixtypes}. We again decorate rows and columns with spectral parameters and define the dual vertex weight functions, which are different from the standard weights \eqref{eq:ffweights} only in swapping parameters $a,b$ for the corresponding types of vertices:
\begin{equation}\label{eq:dualffweights}
    \begin{aligned}
        \widehat{\sva}_1(x,y; a, b) &= 1-a x,\\
        \widehat{\sva}_2(x,y; a,b) &= y + b,
    \end{aligned} \quad
    \begin{aligned}
        \widehat{\svb}_1(x,y; a,b) &= 1+a y,\\
        \widehat{\svb}_2(x,y; a,b) &= x - b,
    \end{aligned} \quad
    \begin{aligned}
        {\widehat{\svd}}_1(x,y; a,b) &= 1-ab,\\
        {\widehat{\svd}}_2(x,y; a,b) &= x+y.
    \end{aligned}
\end{equation}

We also represent vertices of the dual six vertex model in terms of operators acting on the vector spaces $V(x,y)$ and $W(a,b)$. The operator representing a vertex is $\widehat{T}(x,y; a,b)\colon V(x,y) \otimes W(a,b) \to W(a,b) \otimes V(x,y)$ is given in the standard basis by
\[
	\begin{pmatrix}
        \widehat{\sva}_1(x,y; a,b) & & & \widehat{\svd}_2(x,y; a,b)\\
        &  & \widehat{\svb}_1(x,y; a,b) &\\
        & \widehat{\svb}_2(x,y; a,b) &  & \\
        \widehat{\svd}_1(x,y; a,b) & & & \widehat{\sva}_2(x,y; a,b)
    \end{pmatrix} = \begin{pmatrix}
        1-a x & & & x+y\\
        & & 1+a y &\\
        & x-b &  & \\
        1-a b & & & y+b
    \end{pmatrix}.
\]

Analogously to the standard case, we extend the operator to the case of a row with multiple columns, and introduce the dual row transfer operators $\widehat{A}, \widehat{B}, \widehat{C}, \widehat{D}$, and the monodromy matrix $\widehat{M}(x,y; a,b)$. We do not fill the details. 

\subsection{The Yang-Baxter equations}
What makes the subject of the integrable lattice models different from other combinatorial analogues like tableaux, Gelfand-Tsetlin patterns, or crystal bases is the existence of the additional structure provided by the Yang-Baxter equation. Informally, the Yang-Baxter equation is a set of relations on the weights of the vertices that guarantees that the row transfer operators satisfy enough functional relations to make it possible to compute the partition functions \emph{exactly}. Because of this reason, integrable lattice models are also called \emph{exactly solvable} lattice models. 

The surprising feature of the Yang-Baxter equation is that the equation itself is a local equation involving weights of only three vertices at a time, but the consequences are the global functional equations for the row transfer matrices with any number of columns. In other words, ``local structure'' implies ``global structure''.

There are two ways to describe the Yang-Baxter equation. One is the graphical way that involves a new type of vertices, called \emph{cross vertices}, and their weights. These new vertices can be attached to the grid of the six vertex model and be moved around to give the functional equations for the partition functions. For this combinatorial description, see Lemma 1 in \cite{BBF11} or Figure 6 in \cite{ABPW21}. We instead describe the Yang-Baxter equation in terms of the functional relations for the row transfer operators. 

We introduce three operators acting on the parametrized vector spaces associated with the rows of the six vertex model:
\begin{align*}
	R^1(x_1,y_1; x_2,y_2)\colon& V(x_1,y_1) \otimes V(x_2,y_2) \to V(x_2,y_1) \otimes V(x_1, y_2),\\
	R^2(x_1,y_1; x_2,y_2)\colon& V(x_1,y_1) \otimes V(x_2,y_2) \to V(x_1,y_2) \otimes V(x_2, y_1),\\
	R(x_1,y_1; x_2,y_2)\colon& V(x_1,y_1) \otimes V(x_2,y_2) \to V(x_2,y_2) \otimes V(x_2, y_2). 	
\end{align*}
In the standard basis, these operators are given by matrices
\begin{align*}
	R^1(x_1,y_1; x_2,y_2) &= \begin{pmatrix}
        x_1 + y_1 & & &\\
         & x_2 + y_1 & 0 &\\
         & x_1-x_2 & x_1+y_1 & \\
         &  &  & x_2 + y_1
    \end{pmatrix},\\
    R^2(x_1,y_1; x_2,y_2) &= \begin{pmatrix}
        x_2 + y_2 & & &\\
         & x_2 + y_2 & y_2-y_1 &\\
         & 0 & x_2+y_1 & \\
         &  &  & x_2 + y_1
    \end{pmatrix},\\
    R(x_1, y_1; x_2, y_2) &= \begin{pmatrix}
        x_1 + y_2 & & &\\
         & x_2 + y_2 & y_2-y_1 &\\
         & x_1-x_2 & x_1+y_1 & \\
         &  &  & x_2 + y_1
    \end{pmatrix}.
\end{align*}
We note that the matrix $R$ factors into the product of two five vertex matrices (see Section 4.7 of \cite{WZ18} for another instance of factorization into the product of two five vertex matrices):
\[
    R(x_1,y_1; x_2,y_2) = (x_2+y_2)^{-1}R^1(x_1,y_2; x_2,y_1)R^2(x_1,y_1; x_2,y_2).
\] 

We will show that these matrices satisfy the Yang-Baxter equations (also called $RRR = RRR$ relations). Traditionally, the solutions of this form of the Yang-Baxter equation are called \emph{$R$-matrices}. 

\begin{theorem}[The refined Yang-Baxter equations]
	{
	\small
	\begin{align*}
		R^1(x_1,y_1; x_2,y_2)R(x_1,y_1; x_3,y_3)R(x_2,y_2; x_3,y_3) &= R(x_2,y_1; x_3,y_3)R(x_1,y_2; x_3, y_3)R^1(x_1,y_1; x_2,y_2),\\
		R^2(x_1,y_1; x_2,y_2)R(x_1,y_1; x_3,y_3)R(x_2,y_2; x_3,y_3) &= R(x_1,y_2; x_3,y_3)R(x_2,y_1; x_3, y_3)R^2(x_1,y_1; x_2,y_2),\\
		R(x_1,y_1; x_2,y_2)R(x_1,y_1; x_3,y_3)R(x_2,y_2; x_3,y_3) &= R(x_2,y_2; x_3,y_3)R(x_1,y_1; x_3, y_3)R(x_1,y_1; x_2,y_2),
	\end{align*}}
	where all equalities are meant as operators from $V(x_1,y_1) \otimes V(x_2,y_2) \otimes V(x_3,y_3)$ to the tensor product of the parametrized vector spaces that is determined by the applied operators. 
\end{theorem}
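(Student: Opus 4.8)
The plan is to regard each of the three relations as an identity between explicit linear operators on the eight-dimensional space $V(x_1,y_1)\otimes V(x_2,y_2)\otimes V(x_3,y_3)$, so that in principle each is a matrix computation, and then to make this tractable by exploiting a conserved quantity, finally obtaining the third (classical) equation from the two refined ones via the factorization. First I would record that each of $R$, $R^1$, and $R^2$ preserves the total number of occupied edges: in the basis $e_0,e_1$ the only off-diagonal entries of these $4\times 4$ matrices sit in the central block spanned by $e_0\otimes e_1$ and $e_1\otimes e_0$, both of which carry a single path. Hence every factor appearing in the theorem is block-diagonal for the grading of $(\mathbb{C}^2)^{\otimes 3}$ by particle number, and each $RRR=RRR$ relation decouples into four independent identities on the graded pieces, of dimensions $1,3,3,1$.

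On the two one-dimensional pieces (zero paths and three paths) both sides reduce to a product of the relevant diagonal weights, so these are immediate scalar identities. The genuine content lies in the two three-dimensional blocks. There I would write, in the natural basis of the one-path sector $e_1\otimes e_0\otimes e_0,\ e_0\otimes e_1\otimes e_0,\ e_0\otimes e_0\otimes e_1$ (and its two-path analogue), the $3\times 3$ matrix of each factor — every $R$ acting on its two designated slots and as the identity on the third — multiply the three matrices on each side, and compare. After clearing the denominator built into $R$, this is a finite polynomial check, which I would carry out for each of the two refined equations.

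For the third equation I would not compute afresh but deduce it from the first two via the stated factorization
\[
    R(x_1,y_1;x_2,y_2)=(x_2+y_2)^{-1}R^1(x_1,y_2;x_2,y_1)\,R^2(x_1,y_1;x_2,y_2).
\]
Writing the leftmost $R(x_1,y_1;x_2,y_2)$ of the classical equation as this product, I would first pass $R^2(x_1,y_1;x_2,y_2)$ through $R(x_1,y_1;x_3,y_3)R(x_2,y_2;x_3,y_3)$ using the second refined equation, and then pass $R^1(x_1,y_2;x_2,y_1)$ through the resulting pair using the first refined equation with $y_1$ and $y_2$ interchanged (legitimate since the refined equations hold identically in the parameters). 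Both refined equations leave their $R^1$- or $R^2$-factor unchanged, so these factors reassemble on the right into exactly $R^1(x_1,y_2;x_2,y_1)R^2(x_1,y_1;x_2,y_2)=(x_2+y_2)R(x_1,y_1;x_2,y_2)$, the prefactor $(x_2+y_2)^{-1}$ cancels, and this telescoping reproduces the right-hand side of the classical equation.

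The main obstacle is bookkeeping rather than any single hard estimate. Because $R^1$ and $R^2$ are \emph{not} endomorphisms but maps between different parametrized spaces $V(x_i,y_j)$, applying them permutes the spectral parameters among the tensor slots, and one must track these permutations so that the arguments of the subsequent $R$-factors — together with the accompanying scalar denominators — match on the two sides of each equation and at every step of the telescoping. Once the particle-number reduction is in place, the only genuine computation is the multiplication of two pairs of $3\times 3$ matrices for the refined equations, and the deduction of the classical equation is then purely formal.
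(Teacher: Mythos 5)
Your proposal is correct and follows essentially the same route as the paper: the two refined equations are verified by direct matrix computation (your particle-number block decomposition into $1+3+3+1$ is just a sensible way of organizing that check), and the classical equation is then deduced from the refined ones via the factorization $R(x_1,y_1;x_2,y_2)=(x_2+y_2)^{-1}R^1(x_1,y_2;x_2,y_1)R^2(x_1,y_1;x_2,y_2)$, exactly as the paper indicates. Your telescoping — applying the $R^2$-equation first and then the $R^1$-equation with $y_1$ and $y_2$ interchanged — reassembles the factors correctly, so the argument goes through.
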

\begin{proof}
	It is a direct computation and comparison of entries of two $8 \times 8$ matrices. Note that the equations for $R^1$ and $R^2$ imply the equation for $R$ thanks to the factorization.
\end{proof}

We also notice that the $R$-matrices have natural inverses:
\begin{proposition}
	Let $I_4$ be the identity matrix of size $n$. 
	\begin{align*}
		R^1(x_1,y_1; x_2,y_2)R^1(x_2,y_1; x_1,y_2) &= (x_1+y_1)(x_2+y_1)I_4,\\
		R^2(x_1,y_1; x_2,y_2)R^2(x_1,y_2; x_2,y_1) &= (x_2+y_1)(x_2+y_2)I_4,\\
		R(x_1,y_1; x_2,y_2)R(x_2,y_2; x_1,y_1) &= (x_1+y_2)(x_2+y_1)I_4.
	\end{align*}
\end{proposition}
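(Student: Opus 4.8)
The plan is to exploit the common block structure of the three matrices rather than expand anything by brute force. In the ordered basis $e_0 \otimes e_0,\ e_0 \otimes e_1,\ e_1 \otimes e_0,\ e_1 \otimes e_1$, each of $R^1(x_1,y_1;x_2,y_2)$, $R^2(x_1,y_1;x_2,y_2)$, and $R(x_1,y_1;x_2,y_2)$ is block diagonal: it multiplies $e_0 \otimes e_0$ and $e_1 \otimes e_1$ by scalars, and acts on the middle two-dimensional space $\langle e_0 \otimes e_1,\ e_1 \otimes e_0\rangle$ through a single $2 \times 2$ block $N$. Since in each identity the second factor is obtained from the first by a permutation of the row parameters, it has the same block shape, so every product on the left-hand side is again block diagonal and each identity decomposes into three independent checks: the two corner scalars, and the middle $2 \times 2$ block.

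The essential observation, which I would isolate as the key step, is that for each of the three matrices the parameter swap producing the second factor acts on the middle block exactly as the adjugate. Writing $N = \left(\begin{smallmatrix}\alpha & \beta \\ \gamma & \delta\end{smallmatrix}\right)$ for the middle block of the first factor, one checks in each case that the middle block of the second factor equals $\operatorname{adj}(N) = \left(\begin{smallmatrix}\delta & -\beta \\ -\gamma & \alpha\end{smallmatrix}\right)$; for instance, swapping $x_1 \leftrightarrow x_2$ in the block $\left(\begin{smallmatrix} x_2+y_1 & 0 \\ x_1-x_2 & x_1+y_1\end{smallmatrix}\right)$ of $R^1$ yields precisely its adjugate $\left(\begin{smallmatrix} x_1+y_1 & 0 \\ x_2-x_1 & x_2+y_1\end{smallmatrix}\right)$, and similarly for $R^2$ and $R$. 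Because $N\operatorname{adj}(N) = \det(N)\,I_2$, the middle block of each product is automatically the scalar $\det(N)$ times $I_2$.

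It then remains to record that the two corner scalars behave compatibly: the swap exchanges the two corner entries of the first factor, so each corner of the product is the product of those two scalars, and a one-line factorization shows this product equals $\det(N)$ in every case, namely $(x_1+y_1)(x_2+y_1)$, $(x_2+y_1)(x_2+y_2)$, and $(x_1+y_2)(x_2+y_1)$ for $R^1$, $R^2$, and $R$ respectively. Hence all four diagonal entries of each product agree with the claimed scalar and the off-diagonal entries vanish, which gives the three identities. I do not expect any genuine obstacle: the only computation requiring attention is $\det(N)$ in the $R$ case, where both off-diagonal entries $x_1-x_2$ and $y_2-y_1$ are present and one must check that $(x_2+y_2)(x_1+y_1) - (y_2-y_1)(x_1-x_2)$ factors as $(x_1+y_2)(x_2+y_1)$; for $R^1$ and $R^2$ one off-diagonal entry vanishes, the block is triangular, and the cancellation is immediate.
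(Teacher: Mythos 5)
Your proof is correct, and I verified the key claims: in each of the three cases the parameter swap does send the middle $2\times 2$ block $N$ to $\operatorname{adj}(N)$, the two corner scalars get exchanged, and $\det(N)$ equals the product of the two corner entries (including the only nontrivial case, $(x_2+y_2)(x_1+y_1)-(y_2-y_1)(x_1-x_2)=(x_1+y_2)(x_2+y_1)$ for $R$). The paper's own proof is simply ``by direct multiplication,'' so in substance you are doing the same verification; what your organization adds is an explanation of \emph{why} the products are scalar --- the swap acts as the adjugate on the middle block and as a transposition on the corners --- which reduces the check to three small determinant computations instead of a full $4\times 4$ product, and makes it transparent that the three scalars on the right-hand sides are forced to be the determinants of the respective middle blocks. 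This is a legitimate and slightly more illuminating route to the same conclusion; no gaps.
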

\begin{proof}
	By direct multiplication. 
\end{proof}

We see that these $R$-matrices satisfy a version of the braid relations with two sets of parameters. A remarkable property is that matrices $R^1$ and $R^2$ move only one parameter at a time, and the classical $R$-matrix $R$ factors into $R^1$ and $R^2$. In the classical Yang-Baxter equations, the $R$-matrices only exchange the labels of the parametrized vector spaces which in our case would correspond to the simultaneous exchange of parameters $(x_1,y_1)$ and $(x_2,y_2)$. Because our new matrices allow to change only one parameter at a time, we call them the solutions of the \emph{refined} Yang-Baxter equation. 

\begin{remark}
	By setting $y_i = -q^2 x_i$, the $R$-matrix $R(x_1,y_1; x_2,y_2)$ transforms into the well-known $R$-matrix for the standard evaluation representations of the affine quantum supergroup $U_q(\widehat{\mathfrak{sl}(1|1)})$:
    
    \[
        R_q(x_1,x_2) = \begin{pmatrix}
            x_1 - q^2 x_2 & & &\\
             & x_2 - q^2 x_2 & q^2(x_1-x_2) &\\
             & x_1-x_2 & x_1 - q^2 x_1 & \\
             &  &  & x_2 - q x_1
        \end{pmatrix}.
    \]

    More generally, by setting $y_i = t_i x_i$, we obtain the $R$-matrix from \cite{BBF11} up to a change of variables and rescaling. Furthermore, it is possible to relate these $R$-matrices to the the $R$-matrix given by weights (2.6) from \cite{ABPW21} through similar transformations. 
\end{remark}

\begin{remark}
    The five vertex $R$-matrices $R^1$ and $R^2$ can further be simplified:
    \begin{align*}
        R^1(x_1,y_1; x_2,y_2) &= (x_1+y_1)I_4 - (x_1-x_2)E,\\
        R^2(x_1,y_1; x_2,y_2) &= (x + y_2)I_4 + (y_1-y_2)E',
    \end{align*}
    where 
    \[
    	E = \begin{pmatrix}
            0 & & &\\
            & 1& 0&\\
            & -1 & 0 &\\
            & & & 1
        \end{pmatrix}, \quad E' = \begin{pmatrix}
            1 & & &\\
            & 0& -1&\\
            & 0 & 1 &\\
            & & & 1
        \end{pmatrix}.
    \]
    
    Consider elements $E_i$ and $E'_i$ for $1 \leq i \leq n-1$ acting on $(\C^2)^{\otimes n}$, where the action is on the $i$-th and $(i+1)$-th sites, and by identity elsewhere. Then these generators satisfy the braid relations:
    \[
    	(E_i)^2 = E_i, \quad (E'_i)^2 = E'^2.
    \]
    \[
    	E_i E_{i+1}E_i = E_{i+1} E_{i}E_{i+1}, \quad E'_i E'_{i+1}E_i = E'_{i+1} E'_{i}E'_{i+1}.
    \]
    \[
    	E_{i}E_{j} = E_{j}E_{i}, \quad E'_{i}E_{j}' = E_{j'}E_{i}', \quad \text{for $|i-j| > 1$}. 
    \]
    Moreover, we have
    \[
    	E_i E_i' = E_i', \quad E_i'E_i = E_i.
    \]
    Hence, the algebra of the $R$-matrices $R^1(x_1,y_1; x_2,y_2)$ and $R^2(x_1,y_1; x_2,y_2)$ on $(\C^2)^{\otimes n}$ is an algebra generated by $E_i$'s and $E_i'$'s with the relations above. This algebra can be thought of as a distant free fermionic relative of the Temperley Lieb algebra. \end{remark}

The main application of the $R$-matrices in this paper is that they solve the star-triangle equation (also called $RTT = TTR$ relation) for the operators $T(x,y; a,b)$ which represent a vertex in the six vertex model. Confusingly, it is also often called the Yang-Baxter equation despite having a different form from the equations above. 
 
\begin{theorem}[The refined star-triangle equations]\label{thm:refinedstartriangle}
	Let $a,b$ be single parameters. Write $T(x,y)$ for $T(x,y; a,b)$ for brevity. Then
	\begin{align*}\label{eq:ybequation}
		R^1(x_1,y_1;x_2,y_2)T(x_1,y_1)T(x_2,y_2) &= T(x_2,y_1)T(x_1,y_2)R^1(x_1,y_1; x_2,y_2),\\
		R^2(x_1,y_1;x_2,y_2)T(x_1,y_1)T(x_2,y_2) &= T(x_1,y_2)T(x_2,y_1)R^2(x_1,y_1; x_2,y_2),\\
		R(x_1,y_1;x_2,y_2)T(x_1,y_1)T(x_2,y_2) &= T(x_2,y_2)T(x_1,y_1)R(x_1,y_1; x_2,y_2)
	\end{align*}
	where all equalities are meant as operators from $V(x_1,y_1) \otimes V(x_2,y_2) \otimes W(a,b)$ to the tensor product of the parametrized vector spaces that is determined by the applied operators. 
\end{theorem}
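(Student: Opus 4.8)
The plan is to prove the three identities in the order $R^1, R^2, R$, treating the first two as the essential cases and obtaining the third for free. Once the $R^1$ and $R^2$ star--triangle relations are established, the $R$ relation follows exactly as in the $R$ case of the refined Yang--Baxter equations: one composes the two moves and invokes the factorization
\[
    R(x_1,y_1;x_2,y_2) = (x_2+y_2)^{-1}R^1(x_1,y_2;x_2,y_1)R^2(x_1,y_1;x_2,y_2).
\]
Concretely, applying the $R^2$ relation rewrites $T(x_1,y_1)T(x_2,y_2)$ as $T(x_1,y_2)T(x_2,y_1)$ up to conjugation by $R^2(x_1,y_1;x_2,y_2)$, which swaps the two $y$-parameters; then applying the $R^1$ relation with arguments $(x_1,y_2;x_2,y_1)$ rewrites $T(x_1,y_2)T(x_2,y_1)$ as $T(x_2,y_2)T(x_1,y_1)$, which swaps the two $x$-parameters. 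The product of the two $R$-matrices that appears is precisely $(x_2+y_2)R(x_1,y_1;x_2,y_2)$ by the factorization, so dividing through yields the $R$ star--triangle relation. Thus the full-swap relation is the composition of an $x$-swap and a $y$-swap, matching the idea behind the refined equations.

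For the $R^1$ and $R^2$ relations themselves I would argue by direct verification. Each side is an operator on $V(x_1,y_1)\otimes V(x_2,y_2)\otimes W(a,b)\cong\C^8$, hence an explicit $8\times 8$ matrix whose entries are low-degree polynomials in $x_1,y_1,x_2,y_2,a,b$, and the claim is a finite list of polynomial identities. To cut down the bookkeeping I would first observe that every admissible vertex conserves the number of occupied edges, so $T(x,y;a,b)$ and all three $R$-matrices preserve the total occupation number on the triple tensor product. Both sides are therefore block-diagonal with blocks of sizes $1,3,3,1$, indexed by total occupation $0,1,2,3$; the two one-dimensional blocks (the ``all empty'' and ``all occupied'' sectors) match trivially, and only the two $3\times 3$ blocks require attention. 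Within those I would exploit the five-vertex (hence sparse) structure of $R^1$ and $R^2$, together with the block-diagonal free fermionic shape of $T$, to reduce each comparison to a handful of short computations.

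Conceptually, the reason such an $R$ exists at all is that $R(x_1,y_1;x_2,y_2)$ is itself a free fermionic vertex: its entries satisfy $\sva_1\sva_2+\svb_1\svb_2=\svc_1\svc_2$, namely
\[
    (x_1+y_2)(x_2+y_1)+(y_2-y_1)(x_1-x_2) = (x_1+y_1)(x_2+y_2),
\]
so the star--triangle relation is the free fermionic Yang--Baxter equation for two vertices of the same family. I expect the main obstacle to be not the arithmetic, which is low-degree and, after the occupation-number block decomposition, quite short, but the parameter bookkeeping. Since $R^1$ permutes only the $x$-labels and $R^2$ only the $y$-labels of the two auxiliary spaces $V(x_i,y_i)$, the parametrized spaces attached to each tensor factor change after every move; one must track these relabelings carefully in order to apply each relation with the correct arguments and to recognize the factorization in the combined identity. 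Getting this indexing right, rather than the computation, is where the care lies.
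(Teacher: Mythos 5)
Your proposal is correct and takes essentially the same approach as the paper, whose entire proof is a direct computation comparing entries of two $8\times 8$ matrices. Your two refinements---deriving the $R$ relation from the $R^1$ and $R^2$ relations via the stated factorization (exactly as the paper itself does for the $RRR$ equations), and organizing the entrywise check by the conserved occupation number into blocks of sizes $1,3,3,1$---are both valid and merely streamline the same verification.
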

\begin{proof}
	It is a direct computation and comparison of entries of two $8 \times 8$ matrices.
\end{proof}

Informally, the $R$-matrix allows one to exchange the operators $T(x_1,y_1)$ and $T(x_2,y_2)$ when it ``passes'' through them in the Yang-Baxter equation. The miraculous thing is that the $R$-matrix does not depend on parameters $a,b$! Hence, one can use the $R$-matrix to exchange two row operators with arbitrary number of columns.

\begin{corollary}
	The Yang-Baxter equations from \Cref{thm:refinedstartriangle} are true when $a = (a_1,\dots,a_m)$ and $b = (b_1,\dots,b_m)$, in which case $T(x_1,y_1)$ and $T(x_2,y_2)$ are the row operators with $m$ columns labeled by $(a_1,b_1), \dots, (a_m,b_m)$. 
\end{corollary}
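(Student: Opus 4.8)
The plan is to reduce the multi-column identities to the single-column case of \Cref{thm:refinedstartriangle} by the standard ``train'' argument, run as an induction on the number of columns $m$. I will carry out the $R^1$-relation in detail; the $R^2$- and $R$-relations follow by the identical argument, and alternatively the $R$-relation can be deduced from the other two through the factorization $R(x_1,y_1;x_2,y_2) = (x_2+y_2)^{-1}R^1(x_1,y_2;x_2,y_1)R^2(x_1,y_1;x_2,y_2)$ already noted in the excerpt.

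First I would isolate the one structural fact that powers the induction: the vertex operator $T(x_i,y_i;a_j,b_j)$ acts nontrivially only on the row space $V(x_i,y_i)$ and the single column space $W(a_j,b_j)$, and as the identity on every other tensor factor. Hence for $j \neq k$ the operators $T(x_1,y_1;a_j,b_j)$ and $T(x_2,y_2;a_k,b_k)$ act on disjoint factors and therefore commute. This commutativity is the only new input beyond the base case, and it is exactly what lets us slide the vertices of one column past those of another.

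The base case $m = 1$ is precisely \Cref{thm:refinedstartriangle}. For the inductive step I would split off the first column, writing $T(x_i,y_i;a,b) = T(x_i,y_i;a_1,b_1)\,T^{\geq 2}(x_i,y_i)$, where $T^{\geq 2}(x_i,y_i)$ denotes the row operator over columns $2,\dots,m$. Starting from $R^1\,T(x_1,y_1;a,b)\,T(x_2,y_2;a,b)$, I would: (i) use the commutativity above to move the column-$1$ factor of row $2$ to the left, past the columns-$2,\dots,m$ factor of row $1$, so that the two column-$1$ vertices become adjacent; (ii) apply the base case on column $1$ to pass $R^1$ through that pair, swapping $x_1$ and $x_2$ there; (iii) apply the inductive hypothesis to pass $R^1$ through columns $2,\dots,m$; and (iv) use commutativity once more to regroup the swapped factors into $T(x_2,y_1;a,b)\,T(x_1,y_2;a,b)\,R^1$. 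Because $R^1$ is independent of $a,b$, the very same operator is passed through at every stage, which is precisely what makes the telescoping possible.

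The only genuinely delicate point is the tensor-factor bookkeeping: since each $T$ maps $V \otimes W \to W \otimes V$ and thus permutes factors, one must keep careful track of which copies of the row spaces $R^1$ currently acts on as it is pushed rightward. I expect this to be the main obstacle, but it is organizational rather than computational: once a consistent ordering convention is fixed, each of the four manipulations is immediate and no weight computation is required. The statements for $R^2$ and $R$ then follow verbatim.
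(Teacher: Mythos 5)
Your proposal is correct and is exactly the argument the paper intends: the paper's proof consists of the single line ``by repeating application of \Cref{thm:refinedstartriangle},'' which is precisely the induction on columns you spell out, powered by the commutativity of vertex operators acting on disjoint tensor factors and the fact that $R^1$, $R^2$, $R$ are independent of $a,b$. The extra care you take with the tensor-factor bookkeeping is the only content the paper leaves implicit, and your handling of it is sound.
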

\begin{proof}
	By repeating application of \Cref{thm:refinedstartriangle}.
\end{proof}

For our applications, it is particularly convenient to express the Yang-Baxter equation in terms of the functional equations for the row transfer operators. We write $\check{R}(x_1,y_1; x_2,y_2) = R(x_1,y_1; x_2,y_2)P$, where $P$ is the swap operator.

\begin{corollary}\label{lem:ybmonodromy}
	\begin{align*}
		\check{R}^1(x_1,y_1; x_2,y_2)\left(M(x_1,y_1) \otimes M(x_2,y_2)\right) &= \left(M(x_2,y_1) \otimes M(x_1,y_2)\right)\check{R}^1(x_1,y_1; x_2,y_2),\\
		\check{R}^2(x_1,y_1; x_2,y_2)\left(M(x_1,y_1) \otimes M(x_2,y_2)\right) &= \left(M(x_1,y_2) \otimes M(x_2,y_1)\right)\check{R}^2(x_1,y_1; x_2,y_2),\\
		\check{R}(x_1,y_1; x_2,y_2)\left(M(x_1,y_1) \otimes M(x_2,y_2)\right) &= \left(M(x_2,y_2) \otimes M(x_1,y_1)\right)\check{R}(x_1,y_1; x_2,y_2),
	\end{align*}
	where the equality is meant as the entrywise equality of two matrices in the standard basis, and the tensor product is meant to be non-commutative.
\end{corollary}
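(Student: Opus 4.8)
The plan is to obtain these operator identities as a direct repackaging of the refined star-triangle equations, pushing the swap operator $P$ through them. The two definitions that bridge the two formalisms are the monodromy matrix $M(x,y) = T(x,y;a,b)P$ and the twisted $R$-matrices $\check{R} = RP$, $\check{R}^1 = R^1 P$, $\check{R}^2 = R^2 P$. The only content beyond \Cref{thm:refinedstartriangle} is the dictionary observation that the non-commutative tensor product $M(x_1,y_1)\otimes M(x_2,y_2)$, read as an operator on $V(x_1,y_1)\otimes V(x_2,y_2)\otimes W(a,b)$, is precisely the ordered product of the two row monodromy operators sharing the common quantum space $W(a,b)$; once this dictionary is in place, the three claimed identities are the three star-triangle relations rewritten, and the ``entrywise equality of matrices in the standard basis'' in the statement is exactly what the postcomposition with $P$ makes meaningful.

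First I would settle the classical case. By the multi-column corollary following \Cref{thm:refinedstartriangle}, the relation $R(x_1,y_1;x_2,y_2)T(x_1,y_1)T(x_2,y_2) = T(x_2,y_2)T(x_1,y_1)R(x_1,y_1;x_2,y_2)$ already holds for the row operators with $m$ columns labelled by $(a_1,b_1),\dots,(a_m,b_m)$. I would then insert the identity $P^2 = \mathrm{id}$ in the appropriate places and regroup: each $T(x_i,y_i)$ is recombined with a swap into a monodromy operator $M(x_i,y_i)$, while the residual swaps merge with the bare $R$ on each side to produce $\check{R} = RP$. Tracking the tensor factors then turns the left-hand side into $\check{R}(M(x_1,y_1)\otimes M(x_2,y_2))$ and the right-hand side into $(M(x_2,y_2)\otimes M(x_1,y_1))\check{R}$, which is the third identity. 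Passing to $\check{R}$ rather than $R$ is essential because $R$ interchanges the parametrized spaces $V(x_1,y_1)$ and $V(x_2,y_2)$, whereas postcomposing with $P$ returns an endomorphism of a fixed space whose matrix entries can be compared directly.

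The refined cases $\check{R}^1$ and $\check{R}^2$ are formally identical; only the exchanged spectral parameters change. For $\check{R}^1$ the output factors carry $(x_2,y_1)$ and $(x_1,y_2)$, matching the codomain $V(x_2,y_1)\otimes V(x_1,y_2)$ of $R^1$, and for $\check{R}^2$ they carry $(x_1,y_2)$ and $(x_2,y_1)$, matching the codomain of $R^2$. Applying the same manipulation to the first two lines of \Cref{thm:refinedstartriangle} therefore yields the first two lines of the corollary, with no further input needed since $R^1$ and $R^2$ each move a single row parameter.

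The hard part will be the bookkeeping of the tensor-factor orderings and, relatedly, the order in which the entries $A,B,C,D$ of the two monodromy matrices compose on the common space $W(a,b)$: after the swaps are pushed through, one must confirm that the $W$-operators multiply in the order dictated by the non-commutative product $M\otimes M$ and not the reverse, and that the leftover copies of $P$ cancel exactly against those hidden in $\check{R}=RP$. This is routine but error-prone, so I would verify it once on a single column, where $M$, $T$, $P$, and $R$ are explicit $4\times 4$ matrices, and then invoke the multi-column star-triangle relation to extend it to arbitrary $W(a,b)$.
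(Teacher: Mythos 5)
Your proposal is correct and is essentially the paper's own (implicit) argument: the corollary is stated without proof precisely because it is the standard repackaging of the refined star-triangle equations of \Cref{thm:refinedstartriangle} (in their multi-column form) using $M = TP$ and $\check{R} = RP$, with the swap operators cancelling as you describe. Your bookkeeping of which spectral parameters appear in the output factors for $\check{R}^1$ and $\check{R}^2$ matches the statement, so nothing further is needed.
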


It is hard not to appreciate the compact form of these equations, where each equation encode $16$ functional relations for the row transfer operators $A,B,C,D$. Let us immediately demonstrate the power of these functional relations by writing down some of these relations.

\begin{proposition}\label{lem:operatorrelations} We have the following properties of the row transfer operators. 
	\item The operators $A,D$ have separate symmetry in $x$'s and $y$'s:
	\begin{align*}
		A(x_1,y_1)A(x_2,y_2) &= A(x_2,y_1)A(x_1, y_2) = A(x_1,y_2)A(x_2,y_1) = A(x_2,y_2)A(x_1,y_1),\\
		D(x_1,y_1)D(x_2,y_2) &= D(x_2,y_1)D(x_1, y_2) = D(x_1,y_2)D(x_2,y_1) = D(x_2,y_2)D(x_1,y_1).
	\end{align*}
	
	\item The operators $B,C$ have partial symmetry in $x$'s and $y$'s:
        \begin{align*}
            (x_1+y_1)B(x_1,y_1)B(x_2,y_2) &= (x_2+y_1)B(x_2,y_1)B(x_1,y_2),\\
            (x_2+y_2)B(x_1,y_1)B(x_2,y_2) &= (x_2+y_1)B(x_1,y_2)B(x_2,y_1),\\
            (x_2+y_1)C(x_1,y_1)C(x_2,y_2) &= (x_1+y_1)C(x_2,y_1)C(x_1,y_2),\\
            (x_2+y_1)C(x_1,y_1)C(x_2,y_2) &= (x_2+y_2)C(x_1,y_2)C(x_2,y_1).
        \end{align*}
\end{proposition}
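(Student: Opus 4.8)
The plan is to derive every stated identity by extracting individual matrix entries from the three monodromy relations of \Cref{lem:ybmonodromy}. Writing $M_1 = M(x_1,y_1)$ and $M_2 = M(x_2,y_2)$, the non-commutative tensor product $M_1 \otimes M_2$ is the $4\times 4$ array, in the auxiliary basis $e_0 \otimes e_0, e_0 \otimes e_1, e_1 \otimes e_0, e_1 \otimes e_1$, whose entries are the operator products $(M_1)_{ik}(M_2)_{jl}$ on $W(a,b)$. In particular its four corner entries are $A(x_1,y_1)A(x_2,y_2)$ (top-left), $B(x_1,y_1)B(x_2,y_2)$ (top-right), $C(x_1,y_1)C(x_2,y_2)$ (bottom-left), and $D(x_1,y_1)D(x_2,y_2)$ (bottom-right), and the same description holds for the right-hand sides with the appropriately permuted spectral parameters. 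So the whole proposition is a matter of reading off these four corner entries from each relation.

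The key structural input is the shape of $\check{R}^1 = R^1 P$ and $\check{R}^2 = R^2 P$. Since $P$ fixes $e_0 \otimes e_0$ and $e_1\otimes e_1$ while swapping $e_0\otimes e_1$ and $e_1 \otimes e_0$, and since $R^1, R^2$ act on the two extreme-weight lines $\C(e_0\otimes e_0)$ and $\C(e_1 \otimes e_1)$ by scalars, both $\check{R}^1$ and $\check{R}^2$ have a single nonzero entry in each of the rows and columns indexed by $e_0\otimes e_0$ and $e_1\otimes e_1$, sitting on the diagonal corner. Explicitly, the relevant corner scalars are $x_1+y_1$ and $x_2+y_1$ for $\check{R}^1$, and $x_2+y_2$ and $x_2+y_1$ for $\check{R}^2$. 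Consequently, in each matrix identity of \Cref{lem:ybmonodromy}, the four corner entries of the left-hand side reduce to a single scalar times a corner of $M_1 \otimes M_2$, and the corresponding entries of the right-hand side to a corner of the permuted product times a single scalar — no middle-block contributions intervene.

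Carrying this out for the first ($\check{R}^1$) relation, where the right side carries $M(x_2,y_1)\otimes M(x_1,y_2)$, the top-left entry gives $(x_1+y_1)A(x_1,y_1)A(x_2,y_2) = (x_1+y_1)A(x_2,y_1)A(x_1,y_2)$, the bottom-right entry the corresponding identity for $D$, the top-right entry $(x_1+y_1)B(x_1,y_1)B(x_2,y_2) = (x_2+y_1)B(x_2,y_1)B(x_1,y_2)$, and the bottom-left entry the stated $C$-relation; doing the same for the second ($\check{R}^2$) relation, with $M(x_1,y_2)\otimes M(x_2,y_1)$ on the right, yields the remaining $B$- and $C$-identities and the $y$-swapped symmetries of $A$ and $D$. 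The full swap $A(x_1,y_1)A(x_2,y_2)=A(x_2,y_2)A(x_1,y_1)$ and its $D$-analogue come identically from the classical ($\check{R}$) relation. I expect the only real obstacle to be bookkeeping: fixing the correspondence between operator products and matrix positions under the non-commutative tensor product and the swap $P$, and checking that the scalar prefactors cancel for $A,D$ (both corners carry equal scalars) but survive as genuine prefactors for $B,C$ (where the two relevant corners carry $x_1+y_1$ versus $x_2+y_1$, and $x_2+y_2$ versus $x_2+y_1$) — which is precisely the reason $A,D$ are fully symmetric while $B,C$ are only partially so.
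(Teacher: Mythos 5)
Your proposal is correct and is exactly the paper's argument (the paper's proof is the one-line ``By equating entries of both sides of \Cref{lem:ybmonodromy}''), merely carried out in detail: you correctly identify the four corner entries of the non-commutative tensor product with the products $AA$, $BB$, $CC$, $DD$, observe that the corner rows and columns of $\check{R}^1$ and $\check{R}^2$ are supported on the diagonal with the scalars $x_1+y_1$, $x_2+y_1$ and $x_2+y_2$, $x_2+y_1$ respectively, and read off the six relations, with the scalars cancelling for $A,D$ but persisting for $B,C$. No gaps.
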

\begin{proof}
	By equating entries of both sides of \Cref{lem:ybmonodromy}.
\end{proof}

In other words, now we know that some of the row transfer operators \emph{commute} or commute up to a factor. These facts are absolutely not obvious from the combinatorial definition of the six vertex model. Moreover, the refined Yang-Baxter equations allows us to find that there is a \emph{separate symmetry} in parameters $x$'s and $y$'s which is impossible to achieve using the standard Yang-Baxter equation because it exchanges \emph{both} parameters at the same time.

Generally, by reading all entries from \Cref{lem:ybmonodromy}, one gets multiple functional relations for the operators $A,B,C,D$. We do no write them in full because we do not use most of them in this paper. However, we note that these operator relations refine the relations given in Proposition 2.4 of \cite{ABPW21} or Corollary 4.3 of \cite{Kor21} because we get all relations coming from the partial symmetry provided by the refined Yang-Baxter equations. 

Similarly, one can define the $R$-matrices $\widehat{R}^1, \widehat{R}^2, \widehat{R}$ that solve the similar refined Yang-Baxter equations ($RRR = RRR$ type) and the star-triangle equations ($RTT = TTR$) for the dual operators $\widehat{T}(x,y)$ and for the monodromy matrix $\widehat{M}(x,y)$. We omit the details.

There is another class of the interactions between the vertices that is captured by the Yang-Baxter equation that connects the standard vertices with the dual ones. The results of this type occurred in \cite{BBF11}, for the $R$-matrices connecting gamma and delta ice.

Consider the following operator $\overline{R}(x,y; z,w)$ given in the standard basis by the matrix
\begin{equation}
	\overline{R}(x,y; z,w) = \begin{pmatrix}
        1- y w& & & z+w\\
         &  & 1+w x &\\
         & 1+ y z &  & \\
        x+y &  &  & -(1-x z)
    \end{pmatrix}.
\end{equation}

\begin{theorem}[The Yang-Baxter equation of mixed type]
	\[
		\overline{R}(x,y; z,w)T(x,y;a,b)\widehat{T}(z,w;a,b) = \widehat{T}(z,w;a,b)T(x,y;a,b)\overline{R}(x,y;z,w).
	\]
\end{theorem}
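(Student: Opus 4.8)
The plan is to verify the identity directly, exactly in the spirit of the proofs of \Cref{thm:refinedstartriangle}, by expanding both sides as $8 \times 8$ matrices acting on $V(x,y) \otimes V(z,w) \otimes W(a,b)$ and comparing entries. Here $\overline{R}(x,y;z,w)$ acts on the two row spaces $V(x,y) \otimes V(z,w)$ and by identity on $W(a,b)$; the operator $T(x,y;a,b)$ acts on $V(x,y) \otimes W(a,b)$ with matrix given by \eqref{eq:ffweights}; and $\widehat{T}(z,w;a,b)$ acts on $V(z,w) \otimes W(a,b)$ with matrix given by \eqref{eq:dualffweights}. The column space $W(a,b)$ plays the role of the common auxiliary factor through which both vertices are threaded, so that the $R$-matrix $\overline{R}$ exchanges the two rows while $a,b$ are untouched.

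To keep the computation manageable, I would first exploit a parity conservation law. Writing a basis vector as $e_i \otimes e_j \otimes e_k$ with $i,j,k \in \{0,1\}$, one reads off from the matrices that $\overline{R}$ preserves $i+j \bmod 2$, that $T(x,y;a,b)$ preserves $i+k \bmod 2$ (indeed it conserves the path number $i+k$ exactly), and that $\widehat{T}(z,w;a,b)$ preserves $j+k \bmod 2$. Consequently every operator appearing on either side preserves the total parity $i+j+k \bmod 2$, so each $8 \times 8$ matrix is block diagonal with two $4 \times 4$ blocks (the vectors with even and with odd total occupation). It therefore suffices to apply both sides to each of the eight standard basis vectors and match the at most four resulting terms in each case.

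The substantive content then reduces to a short list of polynomial identities in $x,y,z,w,a,b$. The off-diagonal entries of $\overline{R}$ -- in particular the corner weights $z+w$, $x+y$, and the lone sign-bearing entry $-(1-xz)$ -- are engineered so that the terms produced by the corner-turning dual vertices $\widehat{\svd}_1,\widehat{\svd}_2$ (the $e_0 \otimes e_0 \leftrightarrow e_1 \otimes e_1$ transitions of $\widehat{T}$, which are absent from $T$) reconcile with the terms produced by the standard vertices; I expect the free fermionic relation $\sva_1\sva_2 + \svb_1\svb_2 = \svc_1\svc_2$, satisfied by both weight systems, to be the algebraic input that makes these cancellations close up. I expect the main obstacle to be bookkeeping rather than depth: because the standard and dual models carry opposite path orientations, one must track the position of the auxiliary factor $W(a,b)$ and the signs with care, and confirm that the output orderings $W \otimes V(x,y)$ and $W \otimes V(z,w)$ are used consistently on the two sides. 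Once these conventions are pinned down, the even- and odd-parity blocks each collapse to a handful of identities verified by inspection.
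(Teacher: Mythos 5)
Your proposal matches the paper's proof, which is simply a direct computation comparing the entries of the two $8\times 8$ matrices on $V(x,y)\otimes V(z,w)\otimes W(a,b)$. The parity decomposition and the observation that the free fermionic relation $(x+y)(z+w)=(1+wx)(1+yz)-(1-yw)(1-xz)$ drives the cancellations involving the corner vertices are sensible organizational refinements of the same verification, not a different argument.
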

\begin{proof}
	Direct computation. 
\end{proof}

As before, we can reformulate the result in terms of the row transfer operators. 

\begin{corollary}\label{lem:dualybmonodromy}
	\[
		\check{\overline{R}}(x,y; z,w)\left(M(x,y) \otimes \widehat{M}(z,w)\right) = \left(\widehat{M}(z,w) \otimes M(x,y)\right)\check{\overline{R}}(x,y; z,w).
	\]
\end{corollary}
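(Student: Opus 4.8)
The plan is to bootstrap the single-vertex identity of the preceding Yang--Baxter equation of mixed type up to the monodromy matrices, following the same two-step pattern by which \Cref{lem:ybmonodromy} is deduced from \Cref{thm:refinedstartriangle}. First I would extend the mixed equation from a single column to an entire row. Writing $T(x,y;a,b) = T(x,y;a_1,b_1)\cdots T(x,y;a_m,b_m)$ and $\widehat{T}(z,w;a,b) = \widehat{T}(z,w;a_1,b_1)\cdots \widehat{T}(z,w;a_m,b_m)$ for $a = (a_1,\dots,a_m)$ and $b = (b_1,\dots,b_m)$, the decisive point is that $\overline{R}(x,y;z,w)$ acts only on the auxiliary spaces $V(x,y) \otimes V(z,w)$ and is \emph{independent} of the column parameters. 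Because $T(x,y;a_j,b_j)$ and $\widehat{T}(z,w;a_k,b_k)$ act on disjoint tensor factors for $j \neq k$, the two products can be interleaved column by column into $\prod_k T(x,y;a_k,b_k)\widehat{T}(z,w;a_k,b_k)$, and the single-column identity can then be applied to each pair in turn, sliding $\overline{R}$ through the whole row by a standard train argument. This produces the row-level relation
\[
    \overline{R}(x,y;z,w)\,T(x,y;a,b)\widehat{T}(z,w;a,b) = \widehat{T}(z,w;a,b)\,T(x,y;a,b)\,\overline{R}(x,y;z,w)
\]
for an arbitrary number of columns.

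Second, I would repackage this into monodromy form. Using $M(x,y) = T(x,y)P$, $\widehat{M}(z,w) = \widehat{T}(z,w)P$, and $\check{\overline{R}} = \overline{R}P$, the swap operators convert the operator products $T\widehat{T}$ and $\widehat{T}T$ (which map the auxiliary factor across the column space) into the honest $4 \times 4$ operator-valued matrices $M(x,y) \otimes \widehat{M}(z,w)$ and $\widehat{M}(z,w) \otimes M(x,y)$, with $\overline{R}$ becoming $\check{\overline{R}}$. This is the identical bookkeeping used to pass from \Cref{thm:refinedstartriangle} to \Cref{lem:ybmonodromy}, only with one standard and one dual monodromy matrix in place of two standard ones.

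The main obstacle is essentially delegated upstream: the single-column mixed equation itself is granted as the preceding theorem (a direct entry comparison, complicated only by the fact that $T(x,y)$ and $\widehat{T}(z,w)$ share the same column space $W(a,b)$). After that, the only things to verify are that standard and dual row operators attached to distinct columns genuinely commute, so the interleaving is legitimate, and that the swap operators are threaded consistently with the chosen ordering of tensor factors. Neither is deep, so the corollary follows mechanically once the local identity is in hand.
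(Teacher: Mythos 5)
Your proposal is correct and follows the same route the paper (implicitly) takes: the corollary is deduced from the single-vertex mixed Yang--Baxter equation exactly as \Cref{lem:ybmonodromy} is deduced from \Cref{thm:refinedstartriangle}, namely by repeated application across columns (legitimate because $\overline{R}$ is independent of the column parameters and operators on distinct column factors commute) followed by the swap-operator bookkeeping that converts $T\widehat{T}$ and $\widehat{T}T$ into the monodromy-matrix form. The paper omits this argument entirely, so your write-up simply makes explicit what the author leaves to the reader.
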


Again, this compact equality is actually $16$ relations for the operators $A,B,C,D$ and $\widehat{A},\widehat{B},\widehat{C},\widehat{D}$. Let us write down some of these relations. 

\begin{proposition}
	\small
	\begin{align*}
		(1-yw)A(x,y)\widehat{A}(z,w) + (z+w)C(x,y)\widehat{C}(z,w) &= (1-yw)\widehat{A}(z,w)A(x,y) + (x+y)\widehat{B}(z,w)B(x,y),\\
		(1-x z)\widehat{A}(z,w)A(x,y) + (z+w)\widehat{C}(z,w) C(x,y) &= (1-x z)A(x,y)\widehat{A}(z,w) + (x+y)B(x,y)\widehat{B}(z,w),\\
		(1-x z)D(x,y)\widehat{D}(z,w)+(z+w)\widehat{C}(z,w)C(x,y) &= (1- x z)\widehat{D}(z,w)D(x,y) +  (x+y)B(x,y)\widehat{B}(z,w) ,\\
		(1- yw)\widehat{D}(z,w)D(x,y) + (z+w)C(x,y)\widehat{C}(z,w) &= (1- y w)D(x,y)\widehat{D}(z,w) + (x+y)\widehat{B}(z,w)B(x,y).
	\end{align*}
	
	By combining these relations, we also get
	\[
		A(x,y) \widehat{A}(z,w) - \widehat{A}(z,w)A(x,y) = \widehat{D}(z,w)D(x,y) - D(x,y)\widehat{D}(z,w).
	\]
\end{proposition}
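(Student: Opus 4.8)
The plan is to derive the identity purely algebraically from the four operator relations of the preceding proposition, which are themselves read off entrywise from the mixed Yang--Baxter relation of \Cref{lem:dualybmonodromy}; no new lattice-model computation is needed. The whole point is that the combined relation is a formal consequence of the four that precede it.

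First I would rewrite each of the four relations by collecting the two ``diagonal'' commutator terms on one side and the two ``off-diagonal'' products (those involving $B,\widehat B$ or $C,\widehat C$) on the other. Explicitly, the first relation becomes
\[
	(1-yw)\bigl(A(x,y)\widehat A(z,w) - \widehat A(z,w)A(x,y)\bigr) = (x+y)\widehat B(z,w)B(x,y) - (z+w)C(x,y)\widehat C(z,w),
\]
while the fourth relation rearranges to
\[
	(1-yw)\bigl(\widehat D(z,w)D(x,y) - D(x,y)\widehat D(z,w)\bigr) = (x+y)\widehat B(z,w)B(x,y) - (z+w)C(x,y)\widehat C(z,w).
\]

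The key observation is that the right-hand sides of these two rewritten relations are \emph{literally} the same expression, and both left-hand sides carry the same scalar prefactor $(1-yw)$. Subtracting one from the other therefore gives
\[
	(1-yw)\Bigl[\bigl(A(x,y)\widehat A(z,w) - \widehat A(z,w)A(x,y)\bigr) - \bigl(\widehat D(z,w)D(x,y) - D(x,y)\widehat D(z,w)\bigr)\Bigr] = 0.
\]
Since $1-yw$ is a single nonzero element of the ring of coefficients (we regard all matrix entries as polynomials, or rational functions, in the spectral parameters, and $1-yw$ is not the zero polynomial), we may cancel it, yielding exactly
\[
	A(x,y)\widehat A(z,w) - \widehat A(z,w)A(x,y) = \widehat D(z,w)D(x,y) - D(x,y)\widehat D(z,w),
\]
which is the claim.

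There is essentially no genuine obstacle here: the mathematical content resides entirely in \Cref{lem:dualybmonodromy}, and this derivation is pure bookkeeping. The only point requiring a word of care is the cancellation of the scalar $(1-yw)$; this is legitimate precisely because it is a \emph{scalar} (not an operator) multiplying an operator identity, so cancelling it leaves an honest operator identity that holds identically in the parameters. As a consistency check and an independent route, I note that the second and third relations share the common right-hand side $(x+y)B(x,y)\widehat B(z,w) - (z+w)\widehat C(z,w)C(x,y)$ and carry the common prefactor $(1-xz)$; subtracting those and cancelling $(1-xz)$ produces the same identity. This confirms that the result is insensitive to which pair of relations one starts from.
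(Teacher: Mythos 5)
Your derivation is correct and matches what the paper intends: the four displayed relations are read off as entries of the matrix identity in \Cref{lem:dualybmonodromy} (which you rightly take as given), and the final commutator identity is exactly the ``combination'' the paper alludes to, obtained by pairing the first and fourth relations (or the second and third), observing that their rearranged right-hand sides coincide, and cancelling the nonzero scalar polynomial $1-yw$ (resp.\ $1-xz$). The paper omits this algebra entirely, so your write-up simply supplies the omitted bookkeeping, including the correct justification for the scalar cancellation.
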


\begin{proposition}\label{lem:finiteoperatorcauchy}
	We have the following relation for the row transfer operators:
	\small
	\begin{multline*}
		(1+y z)(1+x w)\widehat{A}(z,w)A(x,y) - (1-x z)(1-y w)A(x,y)\widehat{A}(z,w) =\\
		= (1-y w)(x+y)B(x,y)\widehat{B}(z,w) +(1-x z)(z+w)C(x,y)\widehat{C}(z,w) + (x+y)(z+w)D(x,y)\widehat{D}(z,w).
	\end{multline*}
\end{proposition}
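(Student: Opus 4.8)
The plan is to obtain this identity, exactly like the four relations of the preceding proposition, by reading off and combining entries of the single mixed-type Yang--Baxter equation in \Cref{lem:dualybmonodromy}. That corollary is one $4\times 4$ operator-matrix identity $\check{\overline{R}}(x,y;z,w)\bigl(M(x,y)\otimes\widehat{M}(z,w)\bigr)=\bigl(\widehat{M}(z,w)\otimes M(x,y)\bigr)\check{\overline{R}}(x,y;z,w)$, and expanding both sides in the standard basis of the auxiliary $\C^2\otimes\C^2$ produces sixteen scalar relations among the row transfer operators $A,B,C,D$ and $\widehat{A},\widehat{B},\widehat{C},\widehat{D}$. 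The target will emerge as one explicit polynomial-coefficient combination of just two of these entries.

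First I would isolate the two relevant entries (writing $A=A(x,y)$, $\widehat{A}=\widehat{A}(z,w)$, and so on, for brevity). The $(1,1)$ entry reproduces the first relation of the previous proposition, $(1-yw)A\widehat{A}+(z+w)C\widehat{C}=(1-yw)\widehat{A}A+(x+y)\widehat{B}B$. The crucial second ingredient is the $(1,4)$ entry, which was not among the four relations written out before; reading it off gives $(1-yw)B\widehat{B}+(z+w)D\widehat{D}=(z+w)\widehat{A}A-(1-xz)\widehat{B}B$. Both are obtained by the same entrywise comparison already used for the preceding proposition, so they require no computation beyond bookkeeping.

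Next I would eliminate the unwanted $\widehat{B}B$ terms: multiply the $(1,1)$ relation by $(1-xz)$, the $(1,4)$ relation by $(x+y)$, and add the two equations. The resulting $\widehat{B}B$ contributions $(1-xz)(x+y)\widehat{B}B$ and $-(x+y)(1-xz)\widehat{B}B$ cancel, while the two $\widehat{A}A$ contributions combine with coefficient $(1-xz)(1-yw)+(x+y)(z+w)$. The one genuinely non-mechanical step is the factorization $(1-xz)(1-yw)+(x+y)(z+w)=(1+yz)(1+xw)$, which collapses this into the single coefficient $(1+yz)(1+xw)$. Moving the $A\widehat{A}$ term across then leaves the surviving terms $A\widehat{A},B\widehat{B},C\widehat{C},D\widehat{D}$ assembled into exactly the stated identity.

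I expect the only real obstacle to be recognizing the correct pair of entries, in particular the $(1,4)$ entry: it was not recorded among the previously displayed relations, and it cannot be reached from them alone. Indeed, in that earlier list $A\widehat{A}$ and $\widehat{A}A$ always occur antisymmetrically (with equal and opposite coefficients), whereas the target requires $\widehat{A}A$ paired with $-A\widehat{A}$ under the genuinely quadratic coefficients $(1+yz)(1+xw)$ and $(1-xz)(1-yw)$, whose sum is nonzero. Once the two entries and the factorization identity are in hand, the verification is a one-line linear combination.
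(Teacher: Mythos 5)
Your proposal is correct and follows essentially the same route as the paper: the paper's proof simply says the identity follows by algebraic manipulation of the entries of \Cref{lem:dualybmonodromy} together with the free fermionic identity $(x+y)(z+w)=(1+xw)(1+yz)-(1-xz)(1-yw)$, and your combination --- $(1-xz)$ times the $(1,1)$ entry plus $(x+y)$ times the $(1,4)$ entry, with the $\widehat{B}B$ terms cancelling --- is exactly such a manipulation, checked to be consistent with the matrix $\check{\overline{R}}$ and the stated convention for the tensor product. You have merely made explicit which two of the sixteen entries are needed, which the paper leaves implicit.
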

\begin{proof}
	Follows by algebraic manipulations from the relations in \Cref{lem:dualybmonodromy} when the following free fermionic condition is used in the calculations: 
	\[
		(x+y)(w+z) = (1+wx)(1+yz) - (1-yw)(1-xz).\qedhere
	\]
\end{proof}

The $R$-matrix $\overline{R}(x,y; z,w)$ also satisfies the Yang-Baxter equation of type $RRR = RRR$ and can be factored into the product of two five vertex matrices which gives the refined relations between the standard operators and their duals. However, we omit the details. 

At the end of the day, we have an algebra generated by the row transfer operators $A(x,y),B(x,y),C(x,y),D(x,y)$ and their duals $\widehat{A}(x,y),\widehat{B}(x,y), \widehat{C}(x,y), \widehat{D}(x,y)$. These operators satisfy multiple relations coming from the refined Yang-Baxter equations for the standard vertices, for their dual, and for the mixed type. This algebra naturally acts on the space of columns $W(a,b)$ for any $a = (a_1,\dots,a_n)$ and $b = (b_1,\dots,b_n)$. 	The algebra of this kind is sometimes called the Yang-Baxter algebra. This algebra has a rich structure that can be exploited to ``solve'' the six vertex model. In particular, the algebraic Bethe ansatz is a powerful technique for finding explicit expressions for the matrix coefficients of the row transfer operators in terms of solutions to the Bethe equations. These solutions can then be used to compute the partition function and correlation functions of the six vertex model. These methods were used with great success in \cite{ABPW21}, Appendix A, where formulas for the partition functions were found using the algebraic Bethe Ansatz methods. It would be interesting to explore if the new refined relations could improve this approach.

\subsection{Partition functions}\label{sec:partitionfunctions}
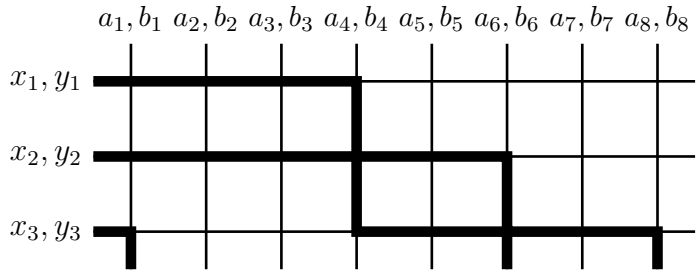
\begin{figure}
    \centering
    \begin{tikzpicture}[xscale=0.5, yscale=0.5]
        \foreach \x in {1,3,5} {
            \draw (0,\x) -- (16,\x);
            \pgfmathtruncatemacro{\index}{4-(\x+1)/2}
            \node [left] at (0,\x) {$x_\index, y_\index$};
        }
        \foreach \y in {1,3,5,7,9,11,13,15} {
            \draw (\y,0) -- (\y,6);
            \pgfmathtruncatemacro{\index}{(\y+1)/2}
            \node [above] at (\y,6) {\small $a_\index, b_\index$};
        }
        
        \draw[path] (0, 1) -- (1, 1) -- (1, 0);
        \draw[path] (0, 3) -- (7, 3) -- (7, 1) -- (11, 1) -- (11, 0);
        \draw[path] (0, 5) -- (7, 5) -- (7,3) -- (11, 3) -- (11, 1) -- (15, 1) -- (15, 0);
    \end{tikzpicture}
    \caption{A typical state in a model for $Z_\alpha$ with $n=3$ and $\alpha = (8, 6, 1)$.}
    \label{fig:modeltype2}
\end{figure} 

In this section we finally put the functional relations for the row transfer operators into practice and find the explicit expressions for the partition functions with various boundary conditions. This section demonstrates the unique feature of the integrable lattice models: while they are combinatorial objects, thanks to integrability, it is possible to find exact expressions for their partition functions.

Let us compute the partition function of the following model. We have $n$ rows labeled by $(x_1,y_1), \dots, (x_n,y_n)$ from top to bottom, and $M$ columns labeled by $(a_1,b_1), \dots, (a_M,b_M)$ from left to right. Then $n$ paths enter the grid on the left at each row, and leave the grid at the bottom columns with the positions $\alpha = (\alpha_1,\dots,\alpha_n)$ read right to left. See \Cref{fig:modeltype2} for the illustration. Let $Z_\alpha$ denote the partition function of this model. We find a determinant expression for $Z_\alpha$ following the method described in Theorem 5 of \cite{BBF11}. Our result generalizes Theorem 5 of \cite{BBF11} as well as Theorem 3.9 of \cite{ABPW21}.

\begin{proposition}\label{lem:determinantfortype2}
	\begin{equation}
		\frac{Z_{\alpha}(x, y; a,b)}{\prod_{i=1}^{n}\prod_{j=1}^{M}(1-b_jx_i)} = \prod_{i=1}^{n}(x_i+y_i) \prod_{i<j}\frac{x_i+y_j}{x_i-x_j}\det\left(\frac{\prod_{k=1}^{\alpha_j-1}(x_i-a_j)}{\prod_{k=1}^{\alpha_j}(1-b_j x_i)}\right)_{1 \leq i,j \leq n}.
	\end{equation}
\end{proposition}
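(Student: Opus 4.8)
The plan is to realize $Z_\alpha$ as a matrix coefficient of a product of the row transfer operators $B$, use the refined commutation relations of \Cref{lem:operatorrelations} to reduce the $x$-dependence to an alternating object, and then identify that object with the stated bialternant using a single-row computation. First I would rewrite the partition function in operator form. Since in this model every row receives exactly one path from the left, no paths enter from the top, and $n$ paths leave through the bottom at the columns $\alpha_1 > \cdots > \alpha_n$, the model computes
\[
    Z_\alpha = \braket{e_\alpha | B(x_1,y_1)B(x_2,y_2)\cdots B(x_n,y_n) | \mathbf{0}},
\]
where $\mathbf{0} = e_0^{\otimes M}$ is the empty top state and $e_\alpha$ is the column state occupied exactly in columns $\alpha_1,\dots,\alpha_n$. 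The case $n=1$ is a direct computation: a single path entering row $i$ on the left runs horizontally across columns $1,\dots,c-1$ (each of weight $\svb_2 = x_i-a_k$), turns downward in column $c$ (weight $\svc_2 = x_i+y_i$), and leaves columns $c+1,\dots,M$ empty (weight $\sva_1 = 1-b_kx_i$); dividing by $\prod_{k=1}^M(1-b_kx_i)$ gives $(x_i+y_i)$ times the determinant entry $\prod_{k<c}(x_i-a_k)/\prod_{k\le c}(1-b_kx_i)$. This simultaneously settles the base case and identifies the matrix entries as the normalized single-row partition functions.

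Next I would use the two partial-symmetry relations for $B$ from \Cref{lem:operatorrelations}, namely $(x_1+y_1)B(x_1,y_1)B(x_2,y_2) = (x_2+y_1)B(x_2,y_1)B(x_1,y_2)$ (which permutes the $x$'s while fixing the $y$'s) and $(x_2+y_2)B(x_1,y_1)B(x_2,y_2) = (x_2+y_1)B(x_1,y_2)B(x_2,y_1)$ (which permutes the $y$'s), to control how $Z_\alpha$ transforms under permutations of the row parameters. Setting $P(x,y) := \prod_{1\le i\le j\le n}(x_i+y_j) = \prod_i(x_i+y_i)\prod_{i<j}(x_i+y_j)$, composing the two relations shows that the simultaneous exchange $(x_k,y_k)\leftrightarrow(x_{k+1},y_{k+1})$ multiplies $Z_\alpha$ by exactly the scalar $(x_{k+1}+y_k)/(x_k+y_{k+1})$ that $P$ itself acquires, while the first relation alone, together with the fact that each weight $\sva_2,\svb_1,\svc_2$ is at most linear in $y_i$, constrains the low-degree polynomial dependence on the $y$'s. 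Together these force
\[
    S(x) \;:=\; \frac{Z_\alpha(x,y)}{P(x,y)\prod_{i,j}(1-b_jx_i)}
\]
to be symmetric in $x_1,\dots,x_n$ and independent of $y$; I would verify the scalar bookkeeping against a direct $n=2$ evaluation, where $Z_{(2,1)} = (x_1+y_1)(x_2+y_2)(x_1+y_2)(1-a_1b_2)$ and $S = (1-a_1b_2)/\prod_{i,j}(1-b_jx_i)$.

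It then remains to prove $S(x)\prod_{i<j}(x_i-x_j) = \det\big(f_{\alpha_j}(x_i)\big)$ with $f_c(x) = \prod_{k<c}(x-a_k)/\prod_{k\le c}(1-b_kx)$. The left-hand side is alternating in the $x_i$, and since the $f_c$ are linearly independent with controlled pole structure, an alternating combination built from them is pinned down by the coefficient of the diagonal product $\prod_i f_{\alpha_i}(x_i)$; this coefficient is read from the configuration in which the path of row $i$ exits in column $\alpha_i$, and the single-row computation shows it equals $1$, matching the identity term of the determinant. I expect the genuine difficulty to lie precisely here, not in the routine single-row or symmetry calculations: one must show that the $y$-dependence factors cleanly as $P(x,y)$ (the factor $\prod_{i<j}(x_i+y_j)$ is not visible vertex-by-vertex, but emerges only after summation), and that the crossing configurations — present because $\sva_2$ allows paths to intersect — reorganize under the refined commutation relations into exactly the remaining signed permutation terms of the determinant rather than polluting it with other $f_c$. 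Controlling this reorganization, i.e.\ confirming that $S\cdot\prod_{i<j}(x_i-x_j)$ lands in the span of the $f_{\alpha_j}$ with the correct diagonal coefficient, is the crux of the argument.
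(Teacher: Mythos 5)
Your setup matches the paper's: you write $Z_\alpha$ as a matrix coefficient of a product of row transfer operators (the paper uses $C$, whose \emph{left} boundary edge is occupied, rather than $B$ --- a notational slip on your part, since in this model the paths enter from the left), you do the same single-row computation, and you extract the factor $\prod_{i\le j}(x_i+y_j)$ using the refined commutation relations together with a degree count in the $y$'s. Up to that point your argument is sound, though the degree count should be made explicit: row $i$ contains exactly $i$ vertices of the types $\sva_2,\svb_1,\svc_2$ (the only weights involving $y_i$), so $\deg_{y_i}Z_\alpha\le i$, which exactly matches the degree of $\prod_i(x_i+y_i)\prod_{i<j}(x_i+y_j)$ in $y_i$ and forces the ratio to be constant in $y$.

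The genuine gap is the step you yourself flag as the crux. Knowing that $S(x)\prod_{i<j}(x_i-x_j)$ is alternating and that the ``diagonal'' configuration contributes $\prod_i f_{\alpha_i}(x_i)$ with coefficient $1$ does not determine the function: such data pins it down only after you know it lies in the $n!$-dimensional span of the products $\prod_i f_{\alpha_{\sigma(i)}}(x_i)$, and nothing in your argument establishes that --- generic states have several $\svc_1/\svc_2$ turns per row and do not contribute terms of that form. The paper closes this gap with the specialization $y_i=-x_i$: once the ratio $Z_\alpha/\bigl(\prod_i(x_i+y_i)\prod_{i<j}(x_i+y_j)\bigr)$ is known to be $y$-independent, one divides out the mandatory single factor $(x_i+y_i)$ per row and then sets $y_i=-x_i$, which annihilates every state containing a second $\svc_2$ vertex in any row. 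The surviving states are exactly the $n!$ configurations in which each path goes straight right and then straight down, indexed by the permutation matching rows to exit columns; the crossings are $\sva_2$ vertices whose weights become $-(x-a)$ and supply precisely the sign of the permutation, so the sum is literally the expansion of the determinant. Your proof needs this specialization (or an equivalent mechanism, e.g.\ the LGV argument of \Cref{lem:svlgv}) to be complete.
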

\begin{proof}
	We note that since all weights \eqref{eq:ffweights} in the six vertex model are polynomials in $x,y,a,b$'s, the partition function is also a polynomial in these variables, and hence an element of a unique factorization domain. The partition function $Z_\alpha$ can be equivalently rewritten in terms of the row transfer operators:
	\[
		Z_\alpha = \braket{e_0^{\otimes M} | C(x_1,y_1)C(x_2,y_2)C(x_3,y_3) | e_{m_1(\alpha)} \otimes \dots \otimes e_{m_M(\alpha)}},
	\]
	where $m_i(\alpha) = 1$ if $i \in \alpha$ and $0$ otherwise.

	By inspection, we have at least one vertex of type $\svc_2$ at each row contributing the factor $(x_i+y_i)$ to each state. By the operator relations in \Cref{lem:operatorrelations}, this matrix element is divisible by $(x_i+y_j)$ for all $1 \leq i < j \leq n$. Since all of these polynomials are co-prime, the matrix element $Z_\alpha$ is actually divisible by the product $\prod_{1 \le i < j \leq n}(x_i+y_j)$. Counting the degrees of $y_i$'s in $Z_\alpha(x,y; a,b)$, we conclude that the following ratio is independent on $y$'s:
	\[
		\frac{Z_{\alpha}(x,y;a,b)}{\prod_{i=1}^{n}(x_i+y_i)\prod_{1 \leq i<j \leq n}(x_i+y_j)}
	\]
	Hence, we can set $y_i = -x_i$ for all $1 \leq i \leq n$. Notice that the weight $\sva_2(x,y;a,b) = (y-a)$ becomes $-(x-a)$, and the weight $c_2(x,y;a,b) = x+y$ becomes $0$. Hence, the only states that contribute to the partition function are the ones with no vertices of type $\svc_2$ which are in bijection with the permutation group, and vertices of type $\sva_2$ contribute the sign of the permutation. Hence, the partition function is equal to the symmetrizer of the state with no vertices of type $\svc_2$ and $\sva_2$. There is only one such state when path starting at row $i$ moves right till column $\alpha_i$, and then goes directly down. The weight of this state is 
	\[
		\prod_{i=1}^{n}\prod_{j=1}^{\alpha_i-1}(x_i-a_j)\prod_{j=\alpha_i+1}^{M}(1-b_j x_i).
	\]
	By pulling out all factors $\prod_{j=1}^{M}(1-b_jx_i)$ from the determinant, we get the result. 
\end{proof}

In the special case when $M=n$ and $\alpha = (n,n-1,\dots,1)$, one can go further and compute the determinant explicitly. 

\begin{proposition}[Vandermonde-type determinant]\label{lem:vandermondetype}
	\begin{equation}
		\det\left(\frac{\prod_{k=1}^{n-j}(x_i-a_j)}{\prod_{k=1}^{n-j+1}(1-b_j x_i)}\right)_{1 \leq i,j \leq n} = \frac{\prod_{1 \leq i < j \leq n}(1-a_ib_j)(x_i-x_j)}{\prod_{i,j=1}^{n}(1-b_j x_i)}.
	\end{equation}
\end{proposition}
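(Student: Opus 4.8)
The plan is to evaluate the determinant directly, treating its entry as the $\alpha=(n,n-1,\dots,1)$ specialization of \Cref{lem:determinantfortype2} and using the standard ``clear the denominators, then isolate an antisymmetric polynomial'' technique. First I would rewrite the $(i,j)$ entry as
\[
	\frac{1}{\prod_{k=1}^{n}(1-b_kx_i)}\,g_j(x_i), \qquad g_j(x) = \prod_{k=1}^{n-j}(x-a_k)\prod_{k=n-j+2}^{n}(1-b_kx),
\]
where $g_j$ is a polynomial of degree exactly $n-1$ in $x$. Pulling the factor $\prod_{k=1}^{n}(1-b_kx_i)^{-1}$ out of the $i$-th row produces precisely the prefactor $\prod_{i,j=1}^{n}(1-b_jx_i)^{-1}$, so it remains to prove the polynomial identity $\det\bigl(g_j(x_i)\bigr)_{1\le i,j\le n} = \prod_{1\le i<j\le n}(1-a_ib_j)(x_i-x_j)$.

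Next I would extract the Vandermonde factor. The determinant $\det(g_j(x_i))$ is antisymmetric under interchanging any two $x_i$ (this swaps two rows), and since each $g_j$ has degree $\le n-1$, the determinant has degree $\le n-1$ in every $x_i$ separately. An antisymmetric polynomial of degree $\le n-1$ in each variable is a scalar multiple of $\prod_{i<j}(x_i-x_j)$, the scalar being independent of the $x$'s; hence $\det(g_j(x_i)) = c(a,b)\prod_{i<j}(x_i-x_j)$ for some $c(a,b)$.

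The heart of the argument is to show $c(a,b)=\prod_{i<j}(1-a_ib_j)$. I would first prove that $c$ vanishes whenever $a_ib_j=1$ for a pair $i<j$. On the locus $a_i=1/b_j$ one has $1-b_jx=-b_j(x-a_i)$, and a short index check shows that for \emph{every} column $m$ at least one of the factors $(x-a_i)$ or $(1-b_jx)$ occurs in $g_m$: the factor $(x-a_i)$ is present when $m\le n-i$, the factor $(1-b_jx)$ is present when $m\ge n-j+2$, and since $i<j$ these two ranges cover all $m\in\{1,\dots,n\}$ with no gap. Thus all $n$ polynomials $g_1,\dots,g_n$ become divisible by $(x-a_i)$, so they lie in the $(n-1)$-dimensional space $(x-a_i)\,\C[x]_{\le n-2}$ and are linearly dependent; the columns of $(g_j(x_i))$ are then dependent and the determinant vanishes. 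Consequently each irreducible $(1-a_ib_j)$ divides $c$, and so does their product. A degree count finishes the job: $a_i$ occurs in $g_m$ only for $m\le n-i$ and $b_j$ only for $m\ge n-j+2$, giving $\deg_{a_i}c\le n-i$ and $\deg_{b_j}c\le j-1$, which match the degrees of $\prod_{i<j}(1-a_ib_j)$ in $a_i$ and $b_j$ exactly. Hence $c=\kappa\prod_{i<j}(1-a_ib_j)$ for a scalar $\kappa$, and specializing $a=b=0$, where $g_j(x)=x^{n-j}$ and $\det(x_i^{n-j})=\prod_{i<j}(x_i-x_j)$, forces $\kappa=1$. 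Reassembling the pieces yields the stated formula.

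I expect the common-factor step to be the main obstacle: everything else is bookkeeping, but recognizing that the condition $a_ib_j=1$ with $i<j$ collapses \emph{all} columns into a single $(n-1)$-dimensional space — rather than merely making two columns proportional — is the decisive observation that produces the full product $\prod_{i<j}(1-a_ib_j)$. One has to verify carefully that the two index ranges $m\le n-i$ and $m\ge n-j+2$ leave no uncovered column, which is exactly where the hypothesis $i<j$ enters.
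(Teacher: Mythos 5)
Your proof is correct, and its skeleton matches the paper's: clear the denominators row by row, use antisymmetry together with the degree bound $\deg_{x_i}\le n-1$ to extract the Vandermonde product, and then identify the remaining $x$-independent factor. Where you diverge is in that last identification. The paper reads off $\prod_{i<j}(1-a_ib_j)$ directly as the coefficient of the monomial $x_1^{n-1}x_2^{n-2}\cdots x_n^0$ in the cleared determinant $\det\bigl(g_j(x_i)\bigr)$; since every $g_j$ has degree exactly $n-1$ in $x$, every permutation contributes to that coefficient, so this step is a genuine (if routine) computation that the paper leaves implicit. You instead establish divisibility by each irreducible $1-a_ib_j$ --- your observation that on the locus $a_ib_j=1$ the two column ranges $m\le n-i$ and $m\ge n-j+2$ cover $\{1,\dots,n\}$ exactly when $i<j$, forcing all $n$ columns into the $(n-1)$-dimensional space $(x-a_i)\,\C[x]_{\le n-2}$, is correct and is the crux --- and then close the argument with a degree count in each $a_i$ and $b_j$ and the normalization $a=b=0$. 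Both routes are valid; yours trades the coefficient extraction for a divisibility-plus-degree argument and is the more fully justified of the two.
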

\begin{proof}
	Multiply both sides by the denominator of the right side and by inspection of the entries in the determinant, observe that both sides are polynomials in $x_1,\dots,x_n$ of the same degrees. After factoring, we only need to figure out the constant which is easily seen to be equal to $\prod_{i < j}(1 - a_ib_j)$ as the coefficient of the leading term $x_1^{n-1}\dots x_{n-1}^1 x_n^0$.
\end{proof}

\begin{remark}
	Similarly, one gets a better-looking determinant identity
	\[
		\det\left(\frac{(x_i - a_1)\dots (x_i-a_{n-j})}{(x_i-b_1)\dots (x_i-b_{n-j+1})}\right)_{1 \leq i,j \leq n} = \frac{\prod_{1 \leq i<j \leq n}(a_i - b_j)(x_i - x_j)}{\prod_{i,j=1}^{n}(x_i - b_j)}.
	\]	
\end{remark}

The special case when $M = n$ and $\alpha = (n,n-1,\dots,1)$ is called the \emph{domain wall boundary conditions}. In this case, we can compute the partition function as a closed product. 

\begin{theorem}[Domain Wall Boundary Conditions]\label{thm:dwbc}
	Let $M = n$ and $\rho_n = (n,n-1,\dots,1)$. Then
	\begin{equation}
		Z_{\rho_n;a,b}(x;y) = \prod_{i=1}^{n}(x_i+y_i) \prod_{1 \leq i < j \leq n}(x_i+y_j)(1-a_ib_j). 
	\end{equation}	
\end{theorem}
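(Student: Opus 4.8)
The plan is to derive this as an immediate corollary of the two preceding propositions, \Cref{lem:determinantfortype2} and \Cref{lem:vandermondetype}, so that the only work is specialization and bookkeeping. First I would specialize \Cref{lem:determinantfortype2} to the domain wall case $M = n$ and $\alpha = \rho_n = (n, n-1, \dots, 1)$. Here $\alpha_j = n - j + 1$, so $\alpha_j - 1 = n - j$, and the $(i,j)$-entry of the determinant on the right-hand side becomes exactly $\dfrac{\prod_{k=1}^{n-j}(x_i - a_k)}{\prod_{k=1}^{n-j+1}(1 - b_k x_i)}$. This is precisely the matrix whose determinant is evaluated in \Cref{lem:vandermondetype}.

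Next I would substitute the value of that determinant. Combining the two propositions gives
\[
    Z_{\rho_n;a,b}(x;y) = \left[\prod_{i=1}^{n}\prod_{j=1}^{n}(1-b_j x_i)\right] \prod_{i=1}^{n}(x_i+y_i) \prod_{i<j}\frac{x_i+y_j}{x_i-x_j} \cdot \frac{\prod_{1 \leq i < j \leq n}(1-a_ib_j)(x_i-x_j)}{\prod_{i,j=1}^{n}(1-b_j x_i)}.
\]
The two copies of $\prod_{i,j=1}^{n}(1-b_j x_i)$ cancel directly. Likewise, the factor $\prod_{i<j}(x_i - x_j)$ from the numerator of the Vandermonde-type evaluation cancels the denominator $\prod_{i<j}(x_i - x_j)$ hidden in $\prod_{i<j}\frac{x_i+y_j}{x_i-x_j}$, leaving $\prod_{i<j}(x_i+y_j)$. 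What survives is
\[
    Z_{\rho_n;a,b}(x;y) = \prod_{i=1}^{n}(x_i+y_i)\prod_{1 \leq i < j \leq n}(x_i+y_j)(1-a_ib_j),
\]
which is exactly the claimed formula.

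Since both ingredients are already established in the excerpt, there is no genuine conceptual obstacle here; the statement is essentially a repackaging of \Cref{lem:determinantfortype2} and \Cref{lem:vandermondetype}. The only point requiring care is the index convention in the determinant entries of \Cref{lem:determinantfortype2} — one must read the shifted-power products as running over the column index $k$ (so that the $j$th column uses $a_1, \dots, a_{\alpha_j - 1}$ and $b_1, \dots, b_{\alpha_j}$), matching the single surviving state computed in the proof of that proposition. With $\alpha = \rho_n$ this lines up verbatim with the matrix in \Cref{lem:vandermondetype}, after which the cancellations above are purely routine.
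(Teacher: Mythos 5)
Your proof is correct and is exactly the paper's argument: the paper's proof of \Cref{thm:dwbc} is literally ``by combining the previous two results,'' i.e., specializing \Cref{lem:determinantfortype2} to $\alpha=\rho_n$ and substituting the evaluation from \Cref{lem:vandermondetype}, with the cancellations you describe. Your note about reading the product index as $k$ rather than $j$ in the determinant entries is the right way to interpret the (apparently typographical) indices in \Cref{lem:determinantfortype2}.
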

\begin{proof}
	By combining the previous two results.	
\end{proof}

Let us next consider the following model. We have $n$ rows labeled by $(x_1,y_1), \dots, (x_n,y_n)$ from top to bottom, and $M_1+M_2$ columns labeled by $(a_{-M_1+1},b_{-M_1+1}), \dots (a_{M_2}, b_{M_2})$ from left to right. Then $n$ paths enter from the top at any $n$ columns with labels $\alpha = (\alpha_1,\dots,\alpha_n)$ in the range $-M_1+1$ to $0$, and leave at the bottom at any $n$ columns with labels $\beta = (\beta_1,\dots,\beta_n)$ in the range from $1$ to $M_2$. Let $Z^{BR}_{\alpha,\beta}$ be the partition function of this model.

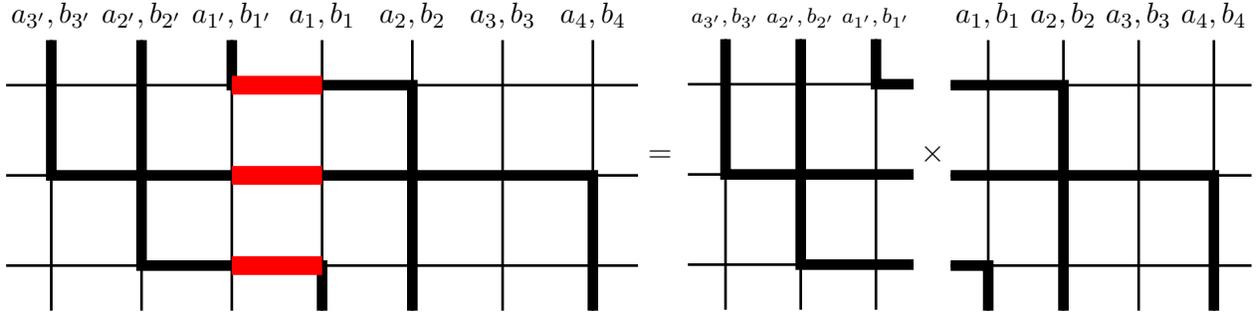
\begin{figure}
    \centering
    \[
    \vcenter{\hbox{\begin{tikzpicture}[xscale=0.6, yscale=0.6]
        \foreach \y in {1,3,5}{
            \draw (0,\y) -- (14, \y);
        }

        \foreach \x in {1,3,5,7,9,11,13}{
            \draw (\x,0) -- (\x, 6);
            
            \ifnum \x > 5
                \pgfmathtruncatemacro{\index}{(\x+1)/2-3}
                \node [above] at (\x, 6) {\small $a_{\index},b_{\index}$};
            \else
                \pgfmathtruncatemacro{\index}{-(\x+1)/2+4}
                \node [above] at (\x, 6) {\small $a_{\index'},b_{\index'}$};
            \fi
        }
        
        \draw[path] (1,6) to (1,3) to (3,3) to (3,1) to (7,1) to (7,0);
        \draw[path] (3,6) to (3,3) to (5,3) to (9,3) to (9,1) to (9,0);
        \draw[path] (5,6) to (5,5) to (9,5) to (9,3) to (13,3) to (13,0);
        \draw[line width=2.5mm, red] (5,5) to (7,5);
        \draw[line width=2.5mm, red] (5,3) to (7,3);
        \draw[line width=2.5mm, red] (5,1) to (7,1);
    \end{tikzpicture}}} = \vcenter{\hbox{\begin{tikzpicture}[xscale=0.5, yscale=0.6]
        \foreach \y in {1,3,5}{
            \draw (0,\y) -- (6, \y);
        }

        \foreach \x in {1,3,5}{
            \draw (\x,0) -- (\x, 6);

            \pgfmathtruncatemacro{\index}{-(\x+1)/2+4}
            \node [above] at (\x, 6) {\tiny $a_{\index'},b_{\index'}$};
        }
        
        \draw[path] (1,6) to (1,3) to (3,3) to (3,1) to (6,1);
        \draw[path] (3,6) to (3,3) to (6,3);
        \draw[path] (5,6) to (5,5) to (6,5);
    \end{tikzpicture}}} \times \vcenter{\hbox{\begin{tikzpicture}[xscale=0.5, yscale=0.6]
        \foreach \y in {1,3,5}{
            \draw (0,\y) -- (8, \y);
        }

        \foreach \x in {1,3,5,7}{
            \draw (\x,0) -- (\x, 6);

            \pgfmathtruncatemacro{\index}{(\x+1)/2}
            \node [above] at (\x, 6) {\small $a_{\index},b_{\index}$};
        }
        
        \draw[path] (0,1) to (1,1) to (1,0);
        \draw[path] (0,3) to (2,3) to (5,3) to (7,3) to (7,0);
        \draw[path] (0,5) to (3,5) to (3,3) to (3,0);
    \end{tikzpicture}}}
    \]
    \caption{The $n$ paths must pass through the horizontal edges in the middle. Then partition function factors into the left and the right sides.}
    \label{fig:factorization}
\end{figure}

\begin{proposition}[Berele-Regev-type factorization]\label{lem:bereleregevf}
	\[
		Z^{BR}_{\alpha,\beta}(x,y; a,b) = Z^*_{\beta}(x,y; a,b)Z_{\alpha}(x,y; a,b),
	\]
	where $Z_\alpha$ is the partition function with $M_2$ columns, and $Z_\beta^*$ is the partition function as shown in \Cref{fig:factorization}. 
\end{proposition}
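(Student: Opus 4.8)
The plan is to prove the factorization by a \emph{vertical cut} argument, splitting the grid along the vertical line separating the left block of columns (indices $-M_1+1,\dots,0$) from the right block (indices $1,\dots,M_2$), exactly as the red edges indicate in \Cref{fig:factorization}. Any admissible state of the full model restricts to a state on the left block and a state on the right block, and the two restrictions glue back to a valid global state precisely when they agree on the $n$ horizontal edges crossing the cut, one in each row. Recording the occupation of these edges as a vector $\gamma\in\{0,1\}^n$, and using that the weight of a state is the product of the weights of its vertices, the partition function decomposes as a sum over the interface data,
\[
	Z^{BR}_{\alpha,\beta}(x,y;a,b) = \sum_{\gamma\in\{0,1\}^n} Z_L(\alpha,\gamma)\, Z_R(\gamma,\beta),
\]
where $Z_L(\alpha,\gamma)$ is the partition function of the left block with top boundary $\alpha$ and right (cut) boundary $\gamma$, and $Z_R(\gamma,\beta)$ is that of the right block with left (cut) boundary $\gamma$ and bottom boundary $\beta$, all remaining external edges being unoccupied.

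The crux is to show that only $\gamma=(1,1,\dots,1)$ contributes. This is forced by the monotonicity of the model: every path travels only rightward and downward, so once a path is right of the cut it can never return, and hence each path crosses the cut \emph{at most once}. In the $Z^{BR}$ model all $n$ paths enter from the top at the columns $\alpha$, which lie in the left block, and all exit at the bottom at the columns $\beta$, which lie in the right block; no path enters or exits through the external left or right edges, nor through the bottom of the left block. Consequently every one of the $n$ paths begins strictly left of the cut and ends strictly right of it, so it crosses \emph{exactly once}. This produces $n$ crossings distributed among the $n$ cut edges, and since each edge carries at most one path, every cut edge must be occupied. Thus $\gamma=(1^n)$ is the unique interface state with $Z_L(\alpha,\gamma)\,Z_R(\gamma,\beta)\neq 0$, and the sum collapses to the single term $Z^{BR}_{\alpha,\beta}=Z_L(\alpha,1^n)\,Z_R(1^n,\beta)$.

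It then remains to identify the two surviving factors. The factor $Z_R(1^n,\beta)$ is the model on the $M_2$ right columns in which a path enters every row from the left and the paths exit at the bottom at the positions $\beta$; this is exactly the standard partition function studied in \Cref{lem:determinantfortype2}. The factor $Z_L(\alpha,1^n)$ is the model on the $M_1$ left columns in which paths enter from the top at the positions $\alpha$ and exit on the right in every row; this is precisely the starred partition function $Z^\ast$ depicted in \Cref{fig:factorization}. Matching these against the two factors appearing in the statement completes the argument.

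I expect the main obstacle to be the forced-occupation step, namely making the path-conservation count fully rigorous: one must carefully track that the external left, right, and (left-block) bottom boundaries are genuinely empty, and confirm that the crossing vertices of type $\sva_2$ let paths intersect without merging or reversing, so that the ``each monotone path crosses any vertical line at most once'' principle applies verbatim. Once that is established, the multiplicativity of weights across the cut and the identification of the two blocks with $Z^\ast$ and the standard partition function are routine bookkeeping.
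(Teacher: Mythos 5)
Your proposal is correct and follows essentially the same route as the paper: the paper's proof is exactly the observation that the boundary conditions force all $n$ monotone paths to cross the $n$ horizontal edges along the vertical cut, so every cut edge is occupied and the weight (being a product over vertices) factors into the left and right blocks. Your write-up merely makes the interface-sum and forced-occupation steps explicit, which the paper leaves as "easily follows from the combinatorial description."
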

\begin{proof}
	The proof easily follows from the combinatorial description of the six vertex model. Due to the boundary conditions of the model, all admissible states must have the $n$ paths crossing through the edges in the middle. This means that the partition function of the model factors into the product of two partition functions for the left and the right sides. These partition functions are given by exactly by the factors on the right side.	
\end{proof}

We remark that the proof is unexpectedly simple because of the combinatorial nature of the six vertex model. The similar proof in terms of the row transfer operators or in terms of other combinatorial objects related to the Berele-Regev factorization is much harder. For example, the involution that swaps all occupied edges to unoccupied can be expressed via relations of the weight functions $\sva_1(x,y; a,b) \leftrightarrow \sva_2(x,y; a,b)$, $\svb_1(x,y; a,b) \leftrightarrow \svb_2(x,y; a,b)$, and $\svc_1(x,y; a,b) \leftrightarrow \svc_2(x,y; a,b)$. Then with a little care, one can automatically conclude the dual versions of the results above. In particular, one can express $Z_\beta^*(x,y; a,b)$ in terms of $Z_\alpha(x,y; a,b)$. However, we omit details as it is out of the scope of this paper. 

In the special case, we get the following two-sided generalization of the partition function with the domain wall boundary conditions. 

\begin{proposition}\label{lem:boxvalue}
	Let $\alpha = (n,n-1,\dots,1)$ and $\beta = (n,n-1,\dots,1)$. Then
	\[
		Z^{BR}_{\alpha,\beta}(x,y; a,b) = \prod_{i,j=1}^{n}(x_i+y_j)\prod_{i<j}(1-a_ib_j)(1-a_i'b_j').
	\]
\end{proposition}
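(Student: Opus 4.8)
The plan is to evaluate $Z^{BR}_{\rho_n,\rho_n}$ by first splitting it through the Berele--Regev factorization of \Cref{lem:bereleregevf} and then recognizing each factor as a domain wall partition function. Writing $\alpha=\beta=\rho_n=(n,n-1,\dots,1)$, \Cref{lem:bereleregevf} gives
\[
Z^{BR}_{\rho_n,\rho_n}(x,y;a,b) = Z^*_{\rho_n}(x,y;a,b)\,Z_{\rho_n}(x,y;a,b),
\]
so the task reduces to computing the two factors separately. The right factor $Z_{\rho_n}$ is exactly the domain wall partition function, so \Cref{thm:dwbc} yields immediately
\[
Z_{\rho_n}(x,y;a,b) = \prod_{i=1}^{n}(x_i+y_i)\prod_{1\le i<j\le n}(x_i+y_j)(1-a_ib_j).
\]

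For the left factor $Z^*_{\rho_n}$, I would observe that, since $\beta=\rho_n$, every column carries an incoming path at the top and every row an outgoing path on the right, so it is itself a domain wall configuration for the reflected model whose columns carry the dual parameters $a_i',b_i'$, as recorded in \Cref{fig:factorization}. To evaluate it, I would invoke the occupied/unoccupied involution discussed after \Cref{lem:bereleregevf}, which interchanges the weights $\sva_1\leftrightarrow\sva_2$, $\svb_1\leftrightarrow\svb_2$, $\svc_1\leftrightarrow\svc_2$; combined with reversing the column order, this expresses $Z^*_{\rho_n}$ as a standard domain wall sum in the dual sequences $a'=(-a_{-i+1})$ and $b'=(-b_{-i+1})$, the double sign making each factor $(1-a_i'b_j')$, giving
\[
Z^*_{\rho_n}(x,y;a,b) = \prod_{1\le i<j\le n}(x_j+y_i)(1-a_i'b_j').
\]
Alternatively, one can bypass the involution and rerun the determinant argument of \Cref{lem:determinantfortype2} together with the Vandermonde evaluation of \Cref{lem:vandermondetype} directly for the reflected model. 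This is the step I expect to require the most care, since one must track precisely how the reflection and the passage to the dual sequence enter the column weights (note in particular that the reflected corner vertices no longer contribute the factors $x_i+y_i$, which is why $Z^*_{\rho_n}$ has lower degree than $Z_{\rho_n}$).

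Finally, multiplying the two factors and collecting the row variables via the elementary identity
\[
\prod_{i=1}^{n}(x_i+y_i)\prod_{1\le i<j\le n}(x_i+y_j)(x_j+y_i) = \prod_{i,j=1}^{n}(x_i+y_j)
\]
produces
\[
Z^{BR}_{\rho_n,\rho_n}(x,y;a,b) = \prod_{i,j=1}^{n}(x_i+y_j)\prod_{1\le i<j\le n}(1-a_ib_j)(1-a_i'b_j'),
\]
as claimed. The only nonroutine ingredient is the evaluation of $Z^*_{\rho_n}$; everything else is an application of \Cref{lem:bereleregevf}, \Cref{thm:dwbc}, and the product identity above.
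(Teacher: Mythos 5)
Your overall strategy---factor $Z^{BR}_{\rho_n,\rho_n}$ through \Cref{lem:bereleregevf}, evaluate the right factor by \Cref{thm:dwbc}, evaluate the left factor as a reflected domain-wall partition function, and recombine using $\prod_{i}(x_i+y_i)\prod_{i<j}(x_i+y_j)(x_j+y_i)=\prod_{i,j}(x_i+y_j)$---is exactly the paper's proof, which says only that both sides factor into the domain wall boundary conditions and ``the dual version,'' and then takes the product.

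The gap is in your evaluation of $Z^*_{\rho_n}$. In the reflected block every one of the $n$ paths enters from the top and leaves to the right, so each path makes at least one top-to-right turn, i.e.\ a vertex of type $\svc_1$ with weight $1-ab$. These turns are the reflected analogues of the corner $\svc_2$-vertices of the ordinary domain-wall model, and they contribute exactly the diagonal factors you explicitly discard (your parenthetical ``the reflected corner vertices no longer contribute'' is the false step): the correct value is $Z^*_{\rho_n}=\prod_{i\le j}(1-a_i'b_j')\prod_{i<j}(x_j+y_i)$, not $\prod_{i<j}(1-a_i'b_j')(x_j+y_i)$. This is immediate for $n=1$, where the left block is a single $\svc_1$-vertex of weight $1-a_1'b_1'$, and for $n=2$ a direct two-state enumeration plus the free-fermion identity gives $(1-a_1'b_1')(1-a_2'b_2')(1-a_1'b_2')(x_2+y_1)$. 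Consequently the product of the two factors is $\prod_{i,j}(x_i+y_j)\prod_{i=1}^{n}(1-a_i'b_i')\prod_{i<j}(1-a_ib_j)(1-a_i'b_j')$, which is precisely what the subsequent proposition for $\widetilde{s}_{[n^n];a,b}$ records (and that proposition is stated to follow from this one); the extra $\prod_{i}(1-a_i'b_i')$ appears to be missing from the statement of \Cref{lem:boxvalue} itself. In short, your method is the right one, but you reverse-engineered a value of $Z^*_{\rho_n}$ that the model does not produce instead of carrying the reflection argument through; doing so honestly yields the corrected right-hand side.
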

\begin{proof}
	We note that both left and right side factorize into the domain wall boundary conditions given by \Cref{thm:dwbc} and the dual version. Then we take the product.
\end{proof}

Let us consider an example of the interaction between the standard six vertex model and the dual version to get us more comfortable working with both types at the same time. When we show both kinds of the models graphically, we draw circles for the vertices of the standard model and squares for the dual ones. 

We consider a grid with just two rows and $m$ columns. The first row is a standard row with the spectral parameter $(x,y)$. The second row is a dual row with the spectral parameter $(z,w)$. The columns have parameters $(a_1,b_1) ,\dots, (a_m,b_m)$. The boundaries on top and bottom are empty. The boundaries on the left are both occupied, and on the right are both unoccupied. The model with a typical admissible state for $m=8$ is shown below:
\[
   \begin{tikzpicture}[scale=0.6]
        \draw (1, -2) -- (15, -2);
        \draw (1, 0) -- (15, 0);
        
        \draw (1,-3) -- (1, -2);
        \draw (3,-3) -- (3, -2);
        \draw (5,-3) -- (5, -2);
        \draw (7,-3) -- (7, -2);
        \draw (9,-3) -- (9, -2);
        \draw (11,-3) -- (11, -2);
        \draw (13,-3) -- (13, -2);
        \draw (15,-3) -- (15, -2);
        
        \draw (1,0) -- (1, 1);
        \draw (3,0) -- (3, 1);
        \draw (5,0) -- (5, 1);
        \draw (7,0) -- (7, 1);
        \draw (9,0) -- (9, 1);
        \draw (11,0) -- (11, 1);
        \draw (13,0) -- (13, 1);
        \draw (15,0) -- (15, 1);
        
        \draw (1,-2) -- (1, 0);
        \draw (3,-2) -- (3, 0);
        \draw (5,-2) -- (5, 0);
        \draw (7,-2) -- (7, 0);
        \draw (9,-2) -- (9, 0);
        \draw (11,-2) -- (11, 0);
        \draw (13,-2) -- (13, 0);
        \draw (15,-2) -- (15, 0);
        
        \draw[path] (0,0) -- (9,0) -- (9, -2) -- (0, -2);
        \draw (15,0) to (16,0);
        \draw (15,-2) to (16,-2);
        
        \node[draw, circle, fill=white, inner sep=3pt] at (1, 0) {};
        \node[draw, circle, fill=white, inner sep=3pt] at (3, 0) {};
        \node[draw, circle, fill=white, inner sep=3pt] at (5, 0) {};
        \node[draw, circle, fill=white, inner sep=3pt] at (7, 0) {};
        \node[draw, circle, fill=white, inner sep=3pt] at (9, 0) {};
        \node[draw, circle, fill=white, inner sep=3pt] at (11, 0) {};
        \node[draw, circle, fill=white, inner sep=3pt] at (13, 0) {};
        \node[draw, circle, fill=white, inner sep=3pt] at (15, 0) {};
        
        \node[draw, rectangle, fill=white, inner sep=3.5pt] at (1, -2) {};
        \node[draw, rectangle, fill=white, inner sep=3.5pt] at (3, -2) {};
        \node[draw, rectangle, fill=white, inner sep=3.5pt] at (5, -2) {};
        \node[draw, rectangle, fill=white, inner sep=3.5pt] at (7, -2) {};
        \node[draw, rectangle, fill=white, inner sep=3.5pt] at (9, -2) {};
        \node[draw, rectangle, fill=white, inner sep=3.5pt] at (11, -2) {};
        \node[draw, rectangle, fill=white, inner sep=3.5pt] at (13, -2) {};
        \node[draw, rectangle, fill=white, inner sep=3.5pt] at (15, -2) {};
        
        \node [above] at (0,0) {\small$x,y$};
        \node [above] at (0,-2) {\small$z,w$};
        \node [above] at (1,1) {\small$a_1,b_1$};
        \node [above] at (3,1) {\small$a_2,b_2$};
        \node [above] at (5,1) {\small$a_3,b_3$};
        \node [above] at (7,1) {\small$a_4,b_4$};
        \node [above] at (9,1) {\small$a_5,b_5$};
        \node [above] at (11,1) {\small$a_6,b_6$};
        \node [above] at (13,1) {\small$a_7,b_7$};
        \node [above] at (15,1) {\small$a_8,b_8$};
    \end{tikzpicture} 
\]

The weight of this state written in two columns for convenience is 
\begin{align*}
	\wt(s) = &(x-a_1)(x-a_2)(x-a_3)(x-a_4)(x+y)(1-b_6 x)(1- b_7 x)(1-b_8 x)\\
	&(z-b_1)(z-b_2)(z-b_3)(z-b_4)(1-a_5b_5)(1-a_6 z)(1-a_7 z)(1-a_8 z).
\end{align*}

It is easy to see that the partition function $Z$ of the model is equal to 
\[
	G_m = \sum_{k=1}^{m}(1-a_kb_k)(x+y)\prod_{i=1}^{k-1}(x-a_i)(z-b_i)\prod_{i=k+1}^{m}(1-b_k x)(1-a_k z).
\]

\begin{proposition}[Sum of the double factorial geometric progression]
    \begin{equation}\label{eq:geomprog}
        \sum_{k=0}^{m}(1-a_{k+1}b_{k+1})\frac{(x|a)^k}{(x;b)^{k+1}}\frac{(z|b)^k}{(z;a)^{k+1}} = \frac{1 - \frac{(x|a)^{m+1}}{(x;b)^{m+1}}\frac{(z|b)^{m+1}}{(z;b)^{m+1}}}{1-xz}.
    \end{equation}
\end{proposition}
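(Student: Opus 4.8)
The plan is to recognize the sum as a telescoping series. Introduce the partial products
\[
    u_k = \frac{(x|a)^k}{(x;b)^k}\cdot\frac{(z|b)^k}{(z;a)^k}, \qquad k \geq 0,
\]
so that $u_0 = 1$ (empty products) and the right-hand side, with the last denominator read as $(z;a)^{m+1}$ to match the summand, is exactly $\frac{1-u_{m+1}}{1-xz}$. Since each factor of the shifted powers increments by one term, the consecutive ratio is
\[
    \frac{u_{k+1}}{u_k} = \frac{(x-a_{k+1})(z-b_{k+1})}{(1-b_{k+1}x)(1-a_{k+1}z)},
\]
and the general summand rewrites cleanly as $(1-a_{k+1}b_{k+1})\,u_k/\big[(1-b_{k+1}x)(1-a_{k+1}z)\big]$. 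The goal is then to show that this equals $\frac{u_k-u_{k+1}}{1-xz}$.

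The key step is the purely algebraic identity
\[
    (1-b_{k+1}x)(1-a_{k+1}z) - (x-a_{k+1})(z-b_{k+1}) = (1-a_{k+1}b_{k+1})(1-xz),
\]
which I would verify by expanding both sides: the cross terms $a_{k+1}z$ and $b_{k+1}x$ cancel, leaving $1 - a_{k+1}b_{k+1} - xz + a_{k+1}b_{k+1}xz = (1-a_{k+1}b_{k+1})(1-xz)$. This is the free fermionic condition in disguise, the same structural cancellation that underlies the weights \eqref{eq:ffweights}, and it is what makes the telescoping work. Dividing through by $(1-b_{k+1}x)(1-a_{k+1}z)$ and multiplying by $u_k$ gives
\[
    u_k - u_{k+1} = u_k\cdot\frac{(1-a_{k+1}b_{k+1})(1-xz)}{(1-b_{k+1}x)(1-a_{k+1}z)} = (1-xz)\cdot(\text{$k$-th summand}),
\]
which is precisely the identification of each term as a difference.

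With this in hand the sum collapses: $\sum_{k=0}^m \frac{u_k - u_{k+1}}{1-xz} = \frac{u_0 - u_{m+1}}{1-xz} = \frac{1 - u_{m+1}}{1-xz}$, using $u_0 = 1$, which is the claimed right-hand side. There is essentially no obstacle here beyond spotting the correct telescoping variables $u_k$ and carrying out the one-line expansion of the free fermionic identity; an equivalent induction on $m$ would use the same algebraic step in its inductive passage from $m$ to $m+1$. The only point requiring care is the indexing of the denominators in the shifted powers, to ensure the last factor on the right is $(z;a)^{m+1}$ so that it coincides with $u_{m+1}$.
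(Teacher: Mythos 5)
Your proof is correct, and it takes a genuinely different route from the paper. You telescope directly: setting $u_k = \frac{(x|a)^k}{(x;b)^k}\frac{(z|b)^k}{(z;a)^k}$, the one-line expansion $(1-b_{k+1}x)(1-a_{k+1}z)-(x-a_{k+1})(z-b_{k+1})=(1-a_{k+1}b_{k+1})(1-xz)$ identifies each summand with $\frac{u_k-u_{k+1}}{1-xz}$, and the sum collapses. The paper instead treats the sum $G_m$ as the matrix element $\braket{e_0^{\otimes m}|C(x,y)\widehat{C}(z,w)|e_0^{\otimes m}}$ of the two-row mixed six vertex model, applies one of the mixed-type Yang--Baxter relations from \Cref{lem:dualybmonodromy} (noting that the $\widehat{C}C$ term vanishes on the vacuum), evaluates the remaining matrix elements explicitly, and rearranges to get $(1-xz)G_m=(x+y)\bigl((x;b)^m(z;a)^m-(x|a)^m(z|b)^m\bigr)$. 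Your argument is more elementary and entirely self-contained, and it makes transparent that the identity is nothing but the free fermionic condition iterated; the paper's argument is longer but serves a expository purpose, namely demonstrating how the Yang--Baxter algebra of row transfer operators produces such identities mechanically, which is the template for the operator Cauchy identity later in \Cref{thm:operatorcauchy}. You were also right to flag and correct the indexing in the stated right-hand side: the final denominator should read $(z;a)^{m+1}$, consistent with the pattern of the summands and with what the paper's own derivation produces.
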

\begin{proof}
	We compute $G_m$ in a different way. In terms of the row transfer operators, we have
	\[
		G_m = \braket{e_0^{\otimes m} | C(x,y)\widehat{C}(z,w) | e_0^{\otimes m}}.
	\]
	Now, using the relations for the row transfer operators from \Cref{lem:dualybmonodromy}, we have
	{\small 
	\[
		(x+y)A(x,y)\widehat{A}(z,w) - (1-xz)C(x,y)\widehat{C}(z,w) = (1-yw)\widehat{C}(z,w)C(x,u) + (x+y)\widehat{D}(z,w)D(x,y).
	\]}Apply $\braket{e_0^{\otimes m} | \cdot | e_0^{\otimes m}}$ to both sides. It is easy to see that the matrix element of $\widehat{C}(z,w)C(x,y)$ is zero. By evaluating the rest of the matrix elements explicitly and rearranging the terms, we get $(1-x z)G_m = (x+y)\left((x;b)^m (z;a)^m - (x|a)^m (z|b)^m\right)$. Now divide both sides by $(x+y)(1-xz)(x;b)^m (z;a)^m$ and change $m$ with $m+1$.  
\end{proof}

Using the row transfer operators, involutions of the six vertex model, and the LGV lemma (\Cref{lem:svlgv}), one can compute many more partition functions and find new identities. 

\section{The infinite six vertex model}\label{sec:infinitesv}
Now we extend the six vertex model to the case of infinitely many columns.

Let $a = (a_i)_{i \in \Z}$ and $b = (b_i)_{i \in \Z}$ be two doubly infinite sequences of indeterminates or complex numbers. Let $W(a,b) = \bigoplus_{k \in \Z}W(a_k,b_k)$ be the tensor product of infinitely many column spaces. Let $\mathcal{W}$ be a subspace of $W(a,b)$ spanned by basis vectors $\otimes_{k \in \Z}^{\infty}e_{i_k}$ such that $i_k = 0$ for all large enough $k \gg 0$, and $i_k = 1$ for all large enough $k \ll 0$. 

Let $a^{n,m} = (a_{-n+1}, a_{-n+2}, \dots, a_{m-1},a_m)$ denote a finite subsequence of parameters. Then for each vector $v$, and for large enough integers $n,m$, there exists a vector $v^{n,m} \in W(a^{n,m}, b^{n,m})$ such that
\[
	v = \dots e_{1} \otimes e_{1} \otimes e_1 \otimes v^{n,m} \otimes e_0 \otimes e_0 \otimes e_0 \dots
\]
We usually identify $v$ with the projection $v^{n,m}$ to simplify notation. 

Our goal is to extend the six vertex model to the case of infinitely many columns when considering only the vectors from $\mathcal{W}$ for the space of columns. The idea is to take the limit of the operators with finite number of columns labeled by $(a_{-n+1},b_{-n+1}), \dots, (a_m,b_m)$ as $n,m \to \infty$. In order to avoid infinite products, we normalize the weights of the vertices for each operator such that the vertices that occur infinitely often have weight $1$. In terms of the equations, we define operators $\mathcal{A}_{a,b},\mathcal{B}_{a,b},\mathcal{C}_{a,b},\mathcal{D}_{a,b}$ and their dual versions $\widehat{\mathcal{A}}_{a,b}, \widehat{\mathcal{B}}_{a,b}, \widehat{\mathcal{C}}_{a,b}, \widehat{\mathcal{D}}_{a,b}$ using the matrix coefficients, where we identify $v,w$ with its projections to $W(a^{n,m}, b^{n,m})$: 
{ \footnotesize 
\begin{align*}
    \braket{v | \mathcal{A}(x,y) | w} = \lim_{n,m \to +\infty}\frac{\braket{v | A(x,y;a^{n,m},b^{n,m}) | w }}{(y;b')^{n}(x;b)^{m}}, \quad \braket{v | \widehat{\mathcal{A}}(x,y) | w} &= \lim_{n,m \to +\infty}\frac{\braket{v | \widehat{A}(x,y;a^{n,m},b^{n,m}) | w }}{(y;a')^{n}(x;a)^{m}},\\
    \braket{v | \mathcal{B}(x,y) | w} = \lim_{n,m \to +\infty}\frac{\braket{v | B(x,y;a^{n,m},b^{n,m}) | w }}{(y;b')^n(x|a)^{m}}, \quad \braket{v | \widehat{\mathcal{B}}(x,y) | w} &= \lim_{n,m \to +\infty}\frac{\braket{v | \widehat{B}(x,y;a^{n,m},b^{n,m}) | w }}{(y;a')^{n}(x;b)^{m}},\\
    \braket{v | \mathcal{C}(x,y) | w} = \lim_{n,m \to +\infty}\frac{\braket{v | C(x,y;a^{n,m},b^{n,m}) | w }}{(y|a')^n(x;b)^{m}}, \quad \braket{v | \widehat{\mathcal{C}}(x,y) | w} &= \lim_{n,m \to +\infty}\frac{\braket{v | \widehat{C}(x,y;a^{n,m},b^{n,m}) | w }}{(y|b')^{n}(x;a)^{m}},\\
    \braket{v | \mathcal{D}(x,y) | w} = \lim_{n,m \to +\infty}\frac{\braket{v | D(x,y;a^{n,m},b^{n,m}) | w }}{(x|a)^{m}(y|a')^m}, \quad \braket{v | \widehat{\mathcal{D}}(x,y) | w} &= \lim_{n,m \to +\infty}\frac{\braket{v | \widehat{D}(x,y;a^{n,m},b^{n,m}) | w }}{(y|b')^{n}(x|b)^{m}}.
\end{align*}
}
Note that we omit the parameters $a,b$ from notation. 

It is easy to see that the operators are well-defined on $\mathcal{W}$ because for each operator the weights of vertices that appear infinitely often will cancel out with denominator. Moreover, in each case, for fixed $v,w \in \mathcal{W}$, the limit actually \emph{stabilizes}, so for large enough $n,m$, the matrix coefficient is equal to the normalized matrix coefficient of the finite operator. In other words, the limit should is formal, and all results remain algebraic (and not analytic). 

Combinatorically, the infinite six vertex model corresponds to the extension of the finite six vertex model in both directions so that the columns are parametrized by all integers. Then one extends the row transfer operators by normalizing weights of the vertices. 

From now on, we concentrate only on operators $\mathcal{A}$ and $\widehat{\mathcal{A}}$.

\subsection{Functional relations for the row transfer operators}\label{sec:functionalrelations}
Since the row transfer operators $\mathcal{A}$ and $\widehat{\mathcal{A}}$ are obtained as a limit, many of the properties follow from the finite versions.

\begin{proposition}\label{lem:operatorrelationsforinfa}
    The operators $\mathcal{A}$ and $\widehat{\mathcal{A}}$ satisfy the following properties:
    \begin{enumerate}
        \item The cancellation property: $\mathcal{A}(t,-t) = 1$ and $\widehat{\mathcal{A}}(t,-t) = 1$.
        \item The operators have separate symmetry in $x$'s and $y$'s:
        	\begin{align*}
        		\mathcal{A}(x_1,y_1)\mathcal{A}(x_2,y_2) &= \mathcal{A}(x_2,y_1)\mathcal{A}(x_1, y_2) = \mathcal{A}(x_1,y_2)\mathcal{A}(x_2,y_1) = \mathcal{A}(x_2,y_2)\mathcal{A}(x_1,y_1),\\
        		\widehat{\mathcal{A}}(x_1,y_1)\widehat{\mathcal{A}}(x_2,y_2) &= \widehat{\mathcal{A}}(x_2,y_1)\widehat{\mathcal{A}}(x_1, y_2) = \widehat{\mathcal{A}}(x_1,y_2)\widehat{\mathcal{A}}(x_2,y_1) = \widehat{\mathcal{A}}(x_2,y_2)\widehat{\mathcal{A}}(x_1,y_1).
        	\end{align*}
    \end{enumerate}
\end{proposition}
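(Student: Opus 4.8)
The plan is to reduce both claims to facts already established for the finite operators and then push them through the stabilizing limit that defines $\mathcal{A}$ and $\widehat{\mathcal{A}}$. The only genuinely new content beyond the finite theory is the bookkeeping of the normalization factors and the justification that the limit commutes with operator composition.

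For the cancellation property I would first invoke \Cref{lem:vanishing}, which already gives $\braket{v | A(t,-t;a^{n,m},b^{n,m}) | w} = 0$ unless $v = w$: the specialization $x = t$, $y = -t$ kills the weight $\svc_2 = x+y$, so no path may turn and the top and bottom boundaries must agree. On the diagonal $v = w$ there is a single contributing state, in which every path runs straight down, and the crucial observation is that the two surviving vertex types carry the same weight at $y = -t$, namely $\sva_1 = 1 - b_j t$ and $\svb_1 = 1 + b_j(-t) = 1 - b_j t$. Hence $\braket{w | A(t,-t;a^{n,m},b^{n,m}) | w} = \prod_{j=-n+1}^{m}(1 - b_j t)$. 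I would then evaluate the normalizing factor at the same point: $(t;b)^m = \prod_{j=1}^{m}(1-b_j t)$ and $(-t;b')^{n} = \prod_{j=-n+1}^{0}(1-b_j t)$, whose product is exactly $\prod_{j=-n+1}^{m}(1-b_j t)$. The ratio is therefore identically $1$ on the diagonal and $0$ off it, so $\mathcal{A}(t,-t) = 1$; the computation for $\widehat{\mathcal{A}}$ is verbatim with $a$ replacing $b$ in every weight and in the normalization.

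For the separate symmetry the finite operators already satisfy the four-fold identity by \Cref{lem:operatorrelations}. The key point is that the normalization attached to each of the four products in the first line of \Cref{lem:operatorrelations} is the \emph{same} symmetric factor $(y_1;b')^{n}(y_2;b')^{n}(x_1;b)^{m}(x_2;b)^{m}$, since permuting the $x$'s between the two factors merely reorders $(x_1;b)^{m}(x_2;b)^{m}$ and likewise for the $y$'s. I would divide the finite identity by this common factor and take the limit; each normalized finite product then converges entrywise to the corresponding composition of infinite operators, and the four limits coincide. The dual statement for $\widehat{\mathcal{A}}$ is identical with $a'$, $a$ in place of $b'$, $b$.

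The main obstacle is this last step: verifying that $\braket{v | \mathcal{A}(x_1,y_1)\mathcal{A}(x_2,y_2) | w}$ is genuinely the limit of the normalized finite products, i.e. that the limit commutes with the intermediate summation $\sum_u \braket{v | \mathcal{A}(x_1,y_1) | u}\braket{u | \mathcal{A}(x_2,y_2) | w}$. I would control this via the observation that paths travel only down and to the right, so in the stacked two-row configuration the intermediate boundary $u$ must have all of its occupied sites weakly between those of $w$ and those of $v$; for fixed $v,w \in \mathcal{W}$ this leaves only finitely many admissible $u$, and this bound is uniform in $n,m$. Uniform finiteness of the intermediate sum lets me interchange limit and sum, after which the stabilization of each matrix coefficient (already noted when $\mathcal{A}$ and $\widehat{\mathcal{A}}$ were defined) closes the argument.
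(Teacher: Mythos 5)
Your argument is correct and takes essentially the same route as the paper: reduce to the finite-column operators via the stabilizing limit and invoke \Cref{lem:vanishing} and \Cref{lem:operatorrelations} together with the normalization. The details you supply that the paper leaves implicit --- the exact cancellation of the diagonal value $\prod_j(1-b_jt)$ against $(-t;b')^n(t;b)^m$, the separate symmetry of the common normalizing factor, and the uniform finiteness of the intermediate sum over $u$ --- all check out.
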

\begin{proof}
	For any $v,w \in \mathcal{W}$, there exist large enough integers $n,m > 0$ such that the matrix elements of $\mathcal{A}$ and $\widehat{\mathcal{A}}$ are up to normalization are equal to the matrix elements of $A(x,y)$ and $\widehat{A}(x,y)$ with finitely many columns. Then the result follows from the finite case, normalization, and the dual versions by \Cref{lem:vanishing}, \Cref{lem:operatorrelations}.
\end{proof}

The surprising thing is that under certain assumptions, the operator relations can \emph{simplify} in the limit. See the detailed discussion of the similar simplified relations in Section 7 of \cite{ABPW21}. We do not write the full list of simplified relations here.  

\begin{theorem}\label{thm:operatorcauchy}
	The operators $\mathcal{A}$ and $\widehat{\mathcal{A}}$ satisfy the following relation: 
	\begin{equation}
		\mathcal{A}(x,y)\widehat{\mathcal{A}}(z,w) = \frac{1+y z}{1-x z}\frac{1+x w}{1-y w}\widehat{\mathcal{A}}(z,w)\mathcal{A}(x,y),
	\end{equation}
	where the equality is understood in the following sense. For given $v,w \in \mathcal{W}$, there exists large enough integers $k,l$ such that the matrix elements of both sides of the equation are equal given that
	\[
		\frac{y-a_k'}{1-b_k'y}\frac{1-b_k'w}{w-a_k'} = 0 \quad \text{and} \quad \frac{x-a_l}{1-b_l x}\frac{1-b_lz}{z-a_l} = 0.
	\]
\end{theorem}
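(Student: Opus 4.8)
The plan is to obtain the relation by pushing the finite operator Cauchy identity of \Cref{lem:finiteoperatorcauchy} into the infinite model through the limiting procedure that defines $\mathcal{A}$ and $\widehat{\mathcal{A}}$. Fix $v,w \in \mathcal{W}$ and choose $k,l$ so large that the nontrivial parts of $v$ and $w$ lie inside $W(a^{k,l},b^{k,l})$; these will be the extreme columns $-k+1$ and $l$ at which the two displayed vanishing conditions are imposed. Working in the finite model on columns $-k+1,\dots,l$, I apply $\braket{v|\,\cdot\,|w}$ to both sides of
\[
    (1+yz)(1+xw)\widehat{A}(z,w)A(x,y) - (1-xz)(1-yw)A(x,y)\widehat{A}(z,w) = \mathcal{R},
\]
where $\mathcal{R}$ abbreviates the three mixed terms $(1-yw)(x+y)B(x,y)\widehat{B}(z,w) + (1-xz)(z+w)C(x,y)\widehat{C}(z,w) + (x+y)(z+w)D(x,y)\widehat{D}(z,w)$, and then divide by $N = (y;b')^{k}(x;b)^{l}\,(w;a')^{k}(z;a)^{l}$, the product of the normalizing denominators attached to $\mathcal{A}(x,y)$ and $\widehat{\mathcal{A}}(z,w)$ in their definitions.

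The two terms on the left are handled by the stabilization built into the construction. Since $N$ is the common normalization for either ordering of $A(x,y)$ and $\widehat{A}(z,w)$, for $k,l$ beyond the support of $v,w$ one has $\braket{v|A(x,y)\widehat{A}(z,w)|w}/N = \braket{v|\mathcal{A}(x,y)\widehat{\mathcal{A}}(z,w)|w}$ and the analogous identity with the operators reversed. Hence the normalized left side is exactly $(1+yz)(1+xw)\braket{v|\widehat{\mathcal{A}}\mathcal{A}|w} - (1-xz)(1-yw)\braket{v|\mathcal{A}\widehat{\mathcal{A}}|w}$, and once $\braket{v|\mathcal{R}|w}/N = 0$ is established, dividing by $(1-xz)(1-yw)$ yields precisely the asserted relation.

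The heart of the argument is therefore to show $\braket{v|\mathcal{R}|w}/N = 0$, and this is where the two hypotheses enter. For each mixed term I rewrite the normalized matrix element as the corresponding composite script-operator coefficient (obtained by dividing through by the natural normalizations of $\mathcal{B}\widehat{\mathcal{B}}$, $\mathcal{C}\widehat{\mathcal{C}}$, $\mathcal{D}\widehat{\mathcal{D}}$) times the leftover ratio of $N$ against those normalizations. Each leftover ratio telescopes into an explicit finite product of single-column factors indexed by $-k+1,\dots,l$, and the two displayed conditions are tuned so that the extreme factor of this product vanishes: the factor at column $l$, built from the standard-side data $(a_l,b_l)$, is governed by $\tfrac{x-a_l}{1-b_lx}\tfrac{1-b_lz}{z-a_l}$, while the factor at column $-k+1$, built from the dual-side data $(a'_k,b'_k)$, is governed by $\tfrac{y-a'_k}{1-b'_ky}\tfrac{1-b'_kw}{w-a'_k}$ (the $D\widehat{D}$ term carries factors of both kinds and is killed by either). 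The main obstacle is exactly this bookkeeping: one must track the normalization of the \emph{composite} mixed operators, including the one-column charge shift produced by $B,C$ relative to the charge-preserving $A,D$, and check that after applying the shift and inversion identities for the double shifted powers together with the free fermionic relation $(x+y)(w+z) = (1+wx)(1+yz) - (1-yw)(1-xz)$ already used in \Cref{lem:finiteoperatorcauchy}, the residual single-column products line up with the two stated vanishing conditions. With the three terms annihilated, the surviving equation rearranges to the claim.
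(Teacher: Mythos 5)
Your proposal follows the same route as the paper: apply $\braket{v|\cdot|w}$ to the finite identity of \Cref{lem:finiteoperatorcauchy}, divide by the normalization so that the two $A\widehat{A}$-terms stabilize to the script operators, and kill the three mixed terms by observing that every contributing state carries, at the far-right or far-left columns, exactly the single-column factors appearing in the two displayed vanishing conditions (the paper phrases this as an inspection of the graphical states rather than as a telescoping of normalization ratios, but it is the same bookkeeping). The argument is correct and matches the paper's proof in all essentials.
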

\begin{proof}
	By \Cref{lem:finiteoperatorcauchy}, 
	\begin{multline*}
		(1+y z)(1+x w)\widehat{A}(z,w)A(x,y) - (1-x z)(1-y w)A(x,y)\widehat{A}(z,w) =\\
		= (1-y w)(x+y)B(x,y)\widehat{B}(z,w) +(1-x z)(z+w)C(x,y)\widehat{C}(z,w) + (x+y)(z+w)D(x,y)\widehat{D}(z,w).
	\end{multline*}
	Let $v,w \in \mathcal{W}$. Then we project $v,w$ to $W(a^{n,m}, b^{n,m})$ for large enough $n,m$ and keep the same notation. We divide both sides by $(y;b)^{n} (x;b)^{m}(w|a')^{n}(z|a)^{m}$ so that the row transfer operators $A$ and $\widehat{A}$ will be converge to $\mathcal{A}$ and $\widehat{\mathcal{A}}$ as $n,m$ goes to infinity. 
	
	By inspection, for large enough $k,l$ that depend on $v,w$, using the graphical presentation of the row transfer operators, we find that all admissible states of the operators on the right side of the identity have either factors 
	\[
		\frac{y-a_k'}{1-b_k'y}\frac{1-b_k'w}{w-a_k'} \quad \text{or} \quad \frac{x-a_k}{1-b_k x}\frac{1-b_kz}{z-a_k},
	\] 
	or both. At the same time, the matrix elements of the operators $A$ and $\widehat{A}$ don't have these factors thanks to normalization. Hence, under the assumption, as $n,m$ goes to infinity, the right side is equal to zero, and the left side converges to the operators $\mathcal{A}$ and $\widehat{\mathcal{A}}$.	
\end{proof}

More generally, we have well-defined operators 
\[
	\mathcal{A}(x_1,y_1)\dots \mathcal{A}(x_n,y_n)\widehat{\mathcal{A}}(z_1,w_1)\dots \widehat{\mathcal{A}}(z_m,w_m),
\]
where the equality is understood in the following sense. For any $v,w \in \mathcal{W}$, there exists large enough integers $k,l$ such that the matrix elements of both sides of the equation are equal given that 
\[
	\frac{y_i-a_k'}{1-b_k'y_i}\frac{1-b_k'w_j}{w_j-a_k'} = 0 \quad \text{and} \quad \frac{x_i-a_l}{1-b_l x_i}\frac{1-b_lz_j}{z_j-a_l} = 0.
\]
In other words, despite a priori there are infinitely many states, we understand the identity in a purely algebraic sense. These conditions can alternatively be formalized by treating the matrix elements on both sides are the formal power series in the double factorial powers, but we do not develop this line.

\begin{remark}
	The result can also be made analytical. See Section 7 of \cite{ABPW21}, in particular the discussion around equations (7.18), (7.19). We set parameters $a = (a_i)_{i \in \Z}$ and $b = (b_i)_{i \in \Z}$, and variables $x,y$ to be complex numbers. Then for convergence we assume the following conditions. Then for convergence, it is enough to assume that 
		\[
			\left|\frac{(x|a)^{m+1}}{(x;b)^{m+1}}\frac{(z|b)^{m+1}}{(z;a)^{m+1}}\right| \to 0.
		\]
		It is sufficient to assume that for infinitely many $j=1,2,\dots$, we have
	\[
		\left|\frac{x-a_j}{1-b_j x}\frac{z-b_j}{1-a_j z}\right| < 1 -\epsilon < 1.
	\]
	
	If $x=(x_1,\dots,x_n)$ and $z = (z_1,\dots,z_m)$, we assume these inequalities for all $x_i$'s and $z_j$'s.
\end{remark}

\subsection{Matrix coefficients in terms of ribbons}
In this section, we describe the matrix coefficients of the operators $\mathcal{A}$ and $\widehat{\mathcal{A}}$ explicitly in the basis of charged partitions. More generally, the entire Yang-Baxter algebra generated by the row transfer operators could be described in terms of ribbons. We omit this description because we do not use them in this paper. See Section 4.3 of \cite{Kor21} for the relevant discussion. 

Recall that $\mathcal{W}$ is spanned by basis vectors $\otimes_{k \in \Z}^{\infty}e_{i_k}$ such that there exist $m_-,m_+$ such that for any $i_k = 0$ for $k \geq m_-$ and $i_k = 1$ for $k \leq m_+$. In other words, the sequence of indices stabilizes by $1$ on the left, and by $0$ on the right. Such binary sequences are called \textit{Maya diagrams}. Let $\lambda$ be a partition and $c \in \Z$ be an integer that we call \emph{level} or \emph{charge}. Then we denote
\begin{equation}
	\alpha_c(\lambda) = \{\lambda_i-i+1+c\}_{i=1}^{\infty}, \quad \beta_c(\lambda) = \{u-\lambda_i' + c\}_{i=1}^{\infty}. 
\end{equation}

It is well-known that (see I, (1.7) in \cite{Mac95}) for any partition $\lambda$, we have $\alpha_c(\lambda) \cap \beta_c(\lambda) = \emptyset$, and
\begin{equation}
	\alpha_c(\lambda) \sqcup \beta_c(\lambda) = \{\lambda_i-i+1+c\}_{i=1}^{\infty} \sqcup \{i-\lambda_i'+c\}_{i=1}^{\infty} = \Z.
\end{equation}

It is not hard to see that the tuples $(\lambda;c)$ for all partitions $\lambda$ and charges $c \in \Z$ produce all Maya diagrams, where the corresponding Maya diagram $(i_k)_{k \in \Z}$ has $i_k = 0$ if $k \in \beta_c(\lambda)$ and $i_k = 1$ if $k \in \alpha_c(\lambda)$. We denote by $\ket{\lambda; c} = \otimes_{k \in \Z}e_{i_k}$ the corresponding basis elements, and by $\bra{\lambda; c}$ the corresponding dual basis. Sometimes these partitions are called \emph{charged partitions}. For the level zero, we write simply $\ket{\lambda}$ and $\bra{\lambda}$. 

Hence, the space $\mathcal{W}$ decomposes into the direct sum $\mathcal{W} = \bigoplus_{c \in \Z}\mathcal{W}_c$, where each $W_c$ is spanned by basis elements $|\lambda; c \rangle$ for all partitions $\lambda$. Each $\mathcal{W}_c$ is called a \emph{level space}.

Let us find the explicit expression for the matrix coefficients of $\mathcal{A}$ and $\widehat{\mathcal{A}}$ in the basis of the charged partitions. The following graphical representations will motivate the results. 

Let $v = \bra{\mu; 3}$ and $w = \ket{\lambda; 3}$ for $\lambda = (6,6,4,2,2)$ and $\mu = (7,6,5,5,3,2,1)$. Then $\braket{v | \mathcal{A}(x,y) | w}$ is graphically shown below. The picture extends on the left with only vertices of type $\svb_1$, and on the right with only vertices of type $\sva_1$. The white circles are normalized so that $\sva_1 = 1$ and the gray circles are normalized so that $\svb_1 = 1$. 
\[
   \begin{tikzpicture}[xscale=0.45, yscale=0.5]
        \draw (1,0) -- (33,0);
        \draw (0,0) -- (1,0);
        \draw (33,0) -- (34,0);
                
       	\draw[path] (1,1) -- (1,-1);
        
        \draw[path] (3,1) -- (3,0) -- (9,0) -- (9,-1);
        \draw[path] (5,1) -- (5,-1);
        
        \draw[path] (11,1) -- (11,0) -- (21,0) -- (21,-1);
        \draw[path] (13,1) -- (13,-1);
        \draw[path] (19,1) -- (19,-1);
        
        \draw[path] (25,1) -- (25,-1);
        
        \draw[path] (27,1) -- (27,0) -- (29,0) -- (29,-1);
        
        \node [above] at (-0.5,0) {\small$x,y$};
        
 		\foreach \i in {1,...,17} {
            \pgfmathtruncatemacro{\x}{2*\i-1}
            \draw (\x,-1) -- (\x,1);
            \pgfmathtruncatemacro{\si}{\i-5}
            
            \ifnum\i<6
		        \ifnum\x=1
		        	\node [above, rotate=20] at (\x,1) {\small $\dots$};
		            \node[draw, circle, fill=gray, inner sep=3pt] at (\x,0) {};
		        \else
		            \node [above, rotate=20] at (\x,1) {\tiny $a_{\si},b_{\si}$};
		            \node[draw, circle, fill=gray, inner sep=3pt] at (\x,0) {};
		        \fi
		    \else
		        \ifnum\x=33
		        	\node [above, rotate=20] at (\x,1) {\small $\dots$};
		            \node[draw, circle, fill=white, inner sep=3pt] at (\x,0) {};
		        \else
		            \node [above, rotate=20] at (\x,1) {\tiny $a_{\si},b_{\si}$};
		            \node[draw, circle, fill=white, inner sep=3pt] at (\x,0) {};
		        \fi
		    \fi
        }
    \end{tikzpicture}
\]
Then the weight is equal to 
{\small 
\[
	\left(\frac{1-a_{-3}b_{-3}}{1+b_{-3}y}\right)\left(\frac{y+a_{-2}}{1+b_{-2}y}\right)\left(\frac{x-a_{-1}}{1+b_{-1}y}\right)\left(\frac{x+y}{1+b_{0}y}\right)\left(\frac{1-a_{1}b_{1}}{1-b_{1}x}\right)\left(\frac{y+a_{2}}{1-b_{2}x}\right)\dots \left(\frac{x+y}{1-b_{10}x}\right)
\]}

From the graphical presentation, we see that the matrix coefficient can be expressed in terms of the product of ``ribbons'', the configurations of paths of the following form:
\[
    \begin{tikzpicture}[scale=0.75]
        \draw (0,0) -- (12,0);
        \draw (1,-1) -- (1,1);
        \draw (3,-1) -- (3,1);
        \draw (5,-1) -- (5,1);
        \draw (7,-1) -- (7,1);
        \draw (9,-1) -- (9,1);
        \draw (11,-1) -- (11,1);
        \draw[path] (1,1) -- (1,0) -- (11,0) -- (11,-1);
        \draw[path] (3,1) -- (3,-1);
        \draw[path] (9,1) -- (9,-1);
    \end{tikzpicture}
\]
The name ``ribbon'' is justified by the following
\begin{lemma}\label{lem:ribbonmaya}
	Let $\lambda/\mu$ be a ribbon with the minimal content $k$ and maximal content $K$. Let $i_1,\dots,i_{r-1}$ be the contents of the boxes that have bottom neighbors. Then
	\begin{enumerate}
		\item $k \in \alpha_0(\mu) \cap \beta_0(\lambda) = \alpha_0(\mu) \setminus \alpha_0(\lambda)$,
		\item $i_1,\dots,i_{r-1} \in \alpha_0(\lambda) \cap \alpha_0(\mu)$,
		\item $K+1 \in \alpha_0(\lambda) \cap \beta_0(\mu) = \alpha_0(\lambda) \setminus \alpha_0(\mu)$.
	\end{enumerate}
\end{lemma}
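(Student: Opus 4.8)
The plan is to translate both claims into the language of the Maya diagram and then read them off from the local structure of the ribbon. Throughout I use the dictionary between $\alpha_0(\nu)$ and the boundary of $\nu$ that is built into the definition: $m\in\alpha_0(\nu)$ exactly when $\nu_p-p+1=m$ for some row index $p\ge 1$ (an empty row $p$ contributing the value $1-p$), i.e.\ exactly when the rightmost box of some row of $\nu$ sits on the diagonal of content $m-1$. Since $\alpha_0(\nu)\sqcup\beta_0(\nu)=\Z$, membership $m\in\beta_0(\nu)$ is the same as $m\notin\alpha_0(\nu)$, and this is precisely what rewrites the intersections in (1) and (3) as the set differences stated. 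First I would record the one structural fact that drives everything: along the ribbon \emph{each step from southwest to northeast raises the content by exactly one}, since both an eastward step $(i,j)\to(i,j+1)$ and a northward step $(i+1,j)\to(i,j)$ increase $j-i$ by $1$. Hence the contents are exactly $k,k+1,\dots,K$, each occurring once; in particular every box with a bottom neighbor has content strictly between $k$ and $K$, so the $i_1,\dots,i_{r-1}$ all lie in $\{k+1,\dots,K\}$.

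For (1) and (3) I would analyze the two extreme boxes. Let $\beta_1=(s,t)$ be the southwestmost box, of content $k=t-s$; by the ribbon property it has neither a left nor a bottom neighbor. Because $(s,t-1)\in\lambda$ is not a box of the ribbon it must belong to $\mu$, which forces $\mu_s=t-1$ and hence $k=\mu_s-s+1\in\alpha_0(\mu)$. On the other hand, the absence of a bottom neighbor together with the monotonicity $\mu_s\ge\mu_{s+1}$ shows $(s+1,t)\notin\lambda$, and a short comparison of row lengths then shows that no row of $\lambda$ has its rightmost box on the diagonal of content $k-1$, so $k\notin\alpha_0(\lambda)$, i.e.\ $k\in\beta_0(\lambda)$. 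This is (1). Part (3) is the mirror image: the same argument applied to the northeastmost box (content $K$), which has neither a right nor a top neighbor, yields $K+1\in\alpha_0(\lambda)$ and $K+1\notin\alpha_0(\mu)$. Alternatively (3) follows from (1) applied to the conjugate ribbon $\lambda'/\mu'$, whose contents are the reflection $[-K,-k]$, under the standard reflection $m\mapsto 1-m$ of Maya diagrams induced by conjugation.

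Part (2) is the heart of the matter, and it is where the $2\times2$-free condition enters. Fix a box $\beta=(i,j)$ of content $m$ that has a bottom neighbor $(i+1,j)$. Since in a ribbon a box cannot have both a left and a bottom neighbor, $\beta$ has no left neighbor, so $(i,j-1)\in\lambda$ is not in the ribbon and therefore belongs to $\mu$; combined with $(i,j)\notin\mu$ this gives $\mu_i=j-1$, whence $m=\mu_i-i+1\in\alpha_0(\mu)$. For the $\lambda$-side I would argue by contradiction: if $(i+1,j+1)\in\lambda$, then all four of $(i,j),(i,j+1),(i+1,j),(i+1,j+1)$ would lie in $\lambda/\mu$ and form a forbidden $2\times2$ block, so $(i+1,j+1)\notin\lambda$, giving $\lambda_{i+1}=j$ and $m=\lambda_{i+1}-(i+1)+1\in\alpha_0(\lambda)$. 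Hence $m\in\alpha_0(\lambda)\cap\alpha_0(\mu)$, which is exactly (2); note that the statement only asks for membership, so the forward implication suffices. Conceptually, (1)--(3) together say that passing from $\mu$ to $\lambda$ moves a single particle of the Maya diagram from site $k$ to site $K+1$, and the occupied sites it jumps over are precisely the contents of the boxes with bottom neighbors (the remaining intermediate sites, carrying holes, are the contents of the boxes with left neighbors).

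The main obstacle I anticipate is not conceptual but bookkeeping. Each part reduces to an assertion of the shape ``no row of $\nu$ has its rightmost box on a prescribed diagonal,'' and confirming this requires a case split on the row index using the monotonicity of $\mu$ and of $\lambda$, together with the degenerate cases where the relevant row is empty or the box lies against the first column (so that $t=1$ or $j=1$ and the putative left neighbor does not exist). These cases are routine, but they must be handled carefully so that the ``rightmost box has content $m-1$'' criterion for $\alpha_0$ is applied correctly and so that parts (1) and (3) genuinely pin down the endpoints $k$ and $K+1$ rather than some interior site.
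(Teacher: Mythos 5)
Your proof is correct and is exactly the direct verification that the paper leaves to the reader (its own proof is literally ``By inspection''): translating $m\in\alpha_0(\nu)$ into the condition $\nu_p=m+p-1$ for some $p$ and checking it at the southwestmost box, the northeastmost box, and the boxes with bottom neighbors, with the $2\times 2$-free condition doing the work in part (2). The only blemish is the phrase ``content strictly between $k$ and $K$,'' which should read ``content in $\{k+1,\dots,K\}$'' as your displayed set correctly states (the box of content $K$ may well have a bottom neighbor); this does not affect the argument, since all you use is that such a box is not the southwestmost one.
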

\begin{proof}
	By inspection. 
\end{proof}

\begin{example}
	Let $\lambda = (4,2,2)$ and $\mu = (1,1)$. Here is the skew shape $\lambda/\mu$ together with its contents written in the corresponding boxes:
	\[
	  \ytableausetup{notabloids}
	  \begin{ytableau}
	   \none & 1 & 2 & 3\\
	   \none & 0  \\
		-2 & -1
	  \end{ytableau}
	\]
	Then $\alpha(\lambda) = \{4,1,0,-3,-4,-5,\dots\}$ and $\alpha(\mu) = \{1,0,-2,-3,-5,\dots\}$. Then indeed we have $-2 \in \alpha(\mu) \setminus \alpha(\lambda)$, and $0,1 \in \alpha(\lambda) \cap \alpha(\mu)$, and $4 \in \alpha(\lambda) \setminus \alpha(\mu)$.
\end{example}

\begin{lemma}
	We have $\braket{\mu; c | \mathcal{A}_{a,b}(x/y) | \lambda; c} = 0$ unless $\lambda/\mu$ is a union of ribbons. 
\end{lemma}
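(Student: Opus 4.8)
The plan is to pass from the operator $\mathcal{A}(x,y)$ back to the underlying single-row six vertex model and argue combinatorially. Since $\mathcal{A}(x,y)$ is defined as a stabilized normalized limit of the finite operators $A(x,y;a^{n,m},b^{n,m})$, and the normalizing factors are nonzero, for large $n,m$ the matrix coefficient $\braket{\mu;c\,|\,\mathcal{A}(x,y)\,|\,\lambda;c}$ vanishes if and only if the corresponding finite partition function vanishes. That partition function is the sum over admissible states of a single row whose top vertical boundary is the Maya diagram $\alpha_c(\lambda)$, whose bottom vertical boundary is $\alpha_c(\mu)$, and whose left and right horizontal boundaries are unoccupied (as forced by the definition of $A$). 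So it suffices to show that this model has no admissible state unless the skew diagram determined by $\lambda$ and $\mu$ is a union of ribbons; note the worked example preceding the lemma already exhibits the nesting $\lambda\subseteq\mu$ that makes the relevant skew shape well defined.

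First I would record the conservation law obeyed by all six vertex types: at every vertex the number of occupied left and top edges equals the number of occupied right and bottom edges. Writing $t_k=[k\in\alpha_c(\lambda)]$ and $b_k=[k\in\alpha_c(\mu)]$ for the top and bottom occupations of column $k$, and $h_k$ for the occupation of the horizontal edge entering column $k$ from the left, conservation becomes $h_{k+1}=h_k+t_k-b_k$. Because the boundary stabilizes to empty horizontal edges at the far right, and after normalization the far left carries only $\svb_1$ vertices, we have $h_k\to 0$ at both ends, and only finitely many $j$ satisfy $t_j\neq b_j$; hence the recursion forces $h_k=\sum_{j<k}(t_j-b_j)$ and determines $h$ uniquely. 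Consequently there is \emph{at most one} candidate state, obtained by reading off the vertex type at each column from $(h_k,t_k,b_k,h_{k+1})$. Inspecting the six types shows this candidate is admissible exactly when $h_k\in\{0,1\}$ for every $k$: the only two local data violating admissibility are $(h_k,t_k,b_k)=(0,0,1)$ and $(1,1,0)$, which force $h_{k+1}=-1$ and $h_{k+1}=2$ respectively. Thus the matrix coefficient is either $0$ (no admissible state) or the nonzero weight of this unique state.

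The core of the proof is to translate $0\le h_k\le 1$ into ``union of ribbons.'' The lower bound $h_k\ge 0$ is the ballot condition on the partial sums of $t-b$, which by the standard Maya-diagram dictionary is equivalent to the nesting of $\lambda$ and $\mu$ required for the skew diagram to exist. For the upper bound I would read $h_k$ as the number of lattice paths required to be in transit across the vertical line to the left of column $k$; since each edge carries at most one path, a state exists only when $h_k\le1$ throughout, and then $h_k\le1$ says that the content-intervals swept by distinct paths are pairwise disjoint. Using \Cref{lem:ribbonmaya}, which locates the minimal and maximal contents of each ribbon through membership in $\alpha_0(\lambda)$ and $\alpha_0(\mu)$, each path corresponds to exactly one ribbon, and $h_k=2$ occurs precisely when two boxes of equal content sit in the diagram with one strictly to the northeast of the other. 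This is exactly the presence of a $2\times 2$ block, so $h_k\le 1$ for all $k$ is equivalent to the absence of $2\times 2$ blocks, i.e. to the diagram being a union of ribbons.

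The step I expect to be the main obstacle is this final equivalence between ``$h_k$ leaves $\{0,1\}$'' and ``a $2\times 2$ block appears.'' The cleanest route is to argue directly on contents rather than track individual paths: a $2\times2$ block produces two boxes of some common content $d$ with one northeast of the other, and from the recursion (or from \Cref{lem:ribbonmaya}) this is seen to force $h_{d+1}\ge 2$; conversely, if $h_k\ge 2$ somewhere, then the first column where the profile rises to $2$ exhibits a $2\times2$ block. Everything else---the reduction to the finite model, the uniqueness of the candidate state, and the ballot/nesting equivalence---is routine once the conservation law $h_{k+1}=h_k+t_k-b_k$ is in place.
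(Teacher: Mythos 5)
Your argument is correct and follows the same route as the paper, whose entire proof is the one-line observation that admissible single-row states correspond to unions of ribbons in Maya diagrams; you have simply supplied the details (the conservation law, the height recursion $h_{k+1}=h_k+t_k-b_k$, uniqueness of the candidate state, and the equivalence of $h_k\le 1$ with the absence of $2\times 2$ blocks) that the paper leaves to inspection. The only quibble is the identification of which of $\alpha_c(\lambda),\alpha_c(\mu)$ sits on the top versus the bottom boundary (the paper's own worked example is inconsistent with the lemma's convention here), but since swapping them merely negates $h$, this does not affect the argument.
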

\begin{proof}
	The only admissible states correspond to the union of ``ribbons'' in Maya diagrams which correspond to the ribbons in the charged partitions.
\end{proof}

The values on the diagonal can be given explicitly as a product:
\begin{lemma}[Diagonal values]\label{lem:operatoremptydiagram}
	\[
		\braket{\lambda;c | \mathcal{A}_{a,b}(x/y) | \lambda; c} = \prod_{i=1}^{\infty}\frac{1-b_{i-\mu_i'+c}x}{1-b_{i}x}\frac{1+b_{\mu_i-i+1+c}y}{1+b_{-i+1}y}
	\]
	Note that the product is always finite. 
\end{lemma}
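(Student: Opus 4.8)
The plan is to reduce the whole computation to the weight of a single frozen lattice state and then to account for the normalization built into the definition of $\mathcal{A}$. Since the incoming and outgoing boundaries of the row operator are the \emph{same} charged partition $\ket{\lambda;c}$, the associated skew shape $\lambda/\lambda$ is empty, so by the ribbon description of these matrix coefficients (the vanishing statement preceding \Cref{lem:operatoremptydiagram}, together with \Cref{lem:ribbonmaya}) there are no ribbons to place. I would argue uniqueness directly: in the definition $\braket{v|A(x,y)|w}=\braket{e_0\otimes v|T(x,y;a,b)|w\otimes e_0}$ the two horizontal external edges of the row are unoccupied, so path conservation, combined with the equality of the top and bottom Maya diagrams, forces every column to be frozen. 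Concretely, a column at position $k$ has both vertical edges occupied exactly when $k\in\alpha_c(\lambda)$, in which case the vertex must be of type $\svb_1$, and both unoccupied exactly when $k\in\beta_c(\lambda)$, in which case it must be of type $\sva_1$; any horizontal step would either change the Maya diagram between top and bottom or require a path on an external row edge, both impossible. Hence there is exactly one admissible state.

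Next I would compute the weight of this state in the truncated model with columns indexed by $[-n+1,m]$ and then divide by the normalization. Using $\svb_1(x,y;a_k,b_k)=1+b_k y$ and $\sva_1(x,y;a_k,b_k)=1-b_k x$, the unique state has weight
\[
\prod_{\substack{k\in\alpha_c(\lambda)\\ -n+1\le k\le m}}(1+b_k y)\;\prod_{\substack{k\in\beta_c(\lambda)\\ -n+1\le k\le m}}(1-b_k x).
\]
The normalizing factor from the definition of $\mathcal{A}$ unwinds as $(y;b')^{n}(x;b)^{m}=\prod_{k=-n+1}^{0}(1+b_k y)\prod_{k=1}^{m}(1-b_k x)$, which is precisely the weight of the same frozen state for the reference vacuum $\ket{\emptyset;0}$ on the identical range of columns. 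Dividing therefore expresses the matrix coefficient as the ratio of the occupation products of $(\lambda;c)$ over those of $(\emptyset;0)$. Re-indexing the occupied positions by $\alpha_c(\lambda)=\{\lambda_i-i+1+c\}_{i\ge 1}$ against $\alpha_0(\emptyset)=\{-i+1\}_{i\ge 1}$, and the unoccupied positions by $\beta_c(\lambda)=\{i-\lambda_i'+c\}_{i\ge 1}$ against $\beta_0(\emptyset)=\{i\}_{i\ge 1}$, turns this ratio into the displayed product.

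Finally I would justify the limit and the finiteness claim. Because $\lambda$ is a partition, $\alpha_c(\lambda)$ coincides with $\alpha_0(\emptyset)=\{k\le 0\}$ for all sufficiently negative $k$, and they differ in only finitely many places (occupied positions $\ge 1$ and missing positions $\le 0$); symmetrically for $\beta_c(\lambda)$ against $\{k\ge 1\}$. Thus the symmetric differences $\alpha_c(\lambda)\triangle\alpha_0(\emptyset)$ and $\beta_c(\lambda)\triangle\beta_0(\emptyset)$ are finite, so for large $n,m$ the truncated ratio stabilizes, all but finitely many factors cancel, and the resulting product is finite and independent of $n,m$. The only genuinely delicate point is the bookkeeping in the middle step: one must match the two truncated occupation products against the two normalizing factors and carry the charge shift $c$ through the index sequences $\alpha_c,\beta_c$, so that the cancellations land exactly on the indexed form in the statement rather than on the unindexed ratio of products. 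Establishing uniqueness of the frozen state is routine once the skew shape is recognized as empty, so I expect the index-matching to be the main obstacle.
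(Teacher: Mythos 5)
The paper states this lemma without proof, treating it as immediate from the graphical description of the normalized weights (the same inspection that drives the proof of \Cref{lem:operatorexplicitrow}); your argument is correct and supplies exactly the intended justification: path conservation with empty horizontal boundary edges and equal top and bottom Maya diagrams forces the unique frozen state with $\svb_1$ on $\alpha_c(\lambda)$ and $\sva_1$ on $\beta_c(\lambda)$, and the normalization $(y;b')^n(x;b)^m$ is precisely the weight of the frozen state for $\ket{\emptyset;0}$, so the matrix coefficient is the ratio of occupation products. Two cosmetic points worth recording: the displayed formula in the lemma should read $\lambda_i'$ and $\lambda_i$ rather than $\mu_i'$ and $\mu_i$ (compare the example following it), and for nonzero charge $c$ the termwise pairing of $\alpha_c(\lambda)$ against $\alpha_0(\emptyset)$ produces factors that are not individually $1$ for large $i$ --- the finiteness is that of the telescoped ratio of index sets, which is exactly what your symmetric-difference argument establishes, so your derivation lands on the correct stabilized value.
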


Now we can give the formula for the matrix coefficients of the operator $\mathcal{A}$. 

\begin{proposition}\label{lem:operatorexplicitrow}
	We have $\braket{\mu; c | \mathcal{A}_{a,b}(x/y) | \lambda; c} = 0$ if the skew diagram $\lambda/\mu$ has a $2 \times 2$ block of boxes. If $\lambda/\mu$ has no $2 \times 2$ blocks of squares, that is, if it is a union of ribbons, then
	\begin{align}
		\frac{\braket{\mu; c | \mathcal{A}_{a,b}(x/y) | \lambda; c}}{\braket{\mu; c | \mathcal{A}_{a,b}(x/y) | \mu; c}} &= \prod_{r \in R(\lambda/\mu)}\wt_{\tau^c a, \tau^c b}(r;x,y).
	\end{align}

	where the weight $\wt_{a,b}(r;x,y)$ of a ribbon $r \in R(\lambda/\mu)$ is given by:
	\begin{equation}\label{eq:ribbonweight}
		\wt_{a,b}(r; x,y) = \frac{(1-a_kb_k)(x+y)}{(1+b_{k} y)(1-b_{K+1} x)}\prod_{\alpha \in r}\begin{cases}
				\frac{x-a_{c(\alpha)}}{1-b_{c(\alpha)}x}, &\text{if $\alpha$ has the left neighbor},\\
				\frac{y+a_{c(\alpha)}}{1+b_{c(\alpha)}y}, &\text{if $\alpha$ has the bottom neighbor};
			\end{cases}
	\end{equation}
	where $k = \min_{\alpha \in r}(c(\alpha))$ is the minimal content in $r$, and $K = \max_{\alpha \in r}(c(\alpha))$ is the maximal content in $r$. 
\end{proposition}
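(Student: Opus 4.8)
The plan is to compute the matrix coefficient directly from the combinatorial (lattice) description of the operator $\mathcal{A}_{a,b}(x/y)$ and then organize the resulting product into the claimed ribbon factorization. First I would use the previous lemma to reduce to the case where $\lambda/\mu$ is a union of ribbons, since otherwise a $2\times 2$ block forces a forbidden local configuration and the coefficient vanishes. Given a union of ribbons, the admissible state contributing to $\braket{\mu;c|\mathcal{A}_{a,b}(x/y)|\lambda;c}$ is \emph{unique}: the operator $\mathcal{A}$ has a single horizontal path structure determined by the Maya diagrams of $\lambda$ and $\mu$ at charge $c$, as illustrated by the worked graphical example preceding \Cref{lem:operatorexplicitrow}. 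So the matrix coefficient is a single (normalized) product of vertex weights, one factor per column, and the entire problem is bookkeeping: identifying which vertex type occurs in each column and matching it against \eqref{eq:ffweights}.

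The key step is the column-by-column translation using \Cref{lem:ribbonmaya}. For a fixed ribbon $r$ with minimal content $k$ and maximal content $K$, that lemma tells me exactly where the path enters and exits in the Maya picture: the entering column (where a path turns down from the horizontal strand) sits at content $k$ and corresponds to a $\svc$-type vertex, contributing the normalized factor $\tfrac{1-a_k b_k}{1+b_k y}$ (a $\svc_1$ normalized against $\svb_1$) together with the $\svc_2$ weight $(x+y)$ somewhere along the ribbon; the exiting column at content $K+1$ contributes $\tfrac{1}{1-b_{K+1}x}$ via an $\sva_1$-type normalization. The interior columns carry either $\sva_2$ or $\svb_2$ vertices according to whether the corresponding box has a bottom neighbor or a left neighbor, giving exactly the two cases $\tfrac{y+a_{c(\alpha)}}{1+b_{c(\alpha)}y}$ and $\tfrac{x-a_{c(\alpha)}}{1-b_{c(\alpha)}x}$ in \eqref{eq:ribbonweight}. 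The charge shift $c$ is absorbed by the reindexing $a\mapsto\tau^c a$, $b\mapsto\tau^c b$, which simply translates all contents by $c$; this matches the $\tau^c$ appearing in the statement.

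Next I would account for the normalization convention built into the definition of $\mathcal{A}$ (division by $(y;b')^n(x;b)^m$): the diagonal coefficient $\braket{\mu;c|\mathcal{A}|\mu;c}$ from \Cref{lem:operatoremptydiagram} collects precisely the ``background'' weights of all columns \emph{not} involved in turning a path down, i.e. the $\sva_1=1-b x$ and $\svb_1=1+b y$ factors along the straight strand determined by $\mu$. Dividing the full coefficient by the diagonal coefficient therefore cancels this background and isolates the product of ribbon contributions. Because distinct ribbons occupy disjoint sets of columns (they have disjoint content intervals, as the skew shape has no $2\times 2$ block), the weights multiply independently over $r\in R(\lambda/\mu)$, yielding the stated product formula.

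The main obstacle, and the place that demands genuine care rather than routine checking, is the precise treatment of the two boundary columns of each ribbon (contents $k$ and $K+1$) and how their weights interact with the normalization denominators $(1+b_k y)$ and $(1-b_{K+1}x)$ appearing in \eqref{eq:ribbonweight}. These prefactors are not interior vertex weights but arise from the mismatch between the ribbon's local configuration and the straight-strand background of $\mu$ at its endpoints, so I must verify via \Cref{lem:ribbonmaya} that content $k$ lies in $\alpha_0(\mu)\setminus\alpha_0(\lambda)$ and $K+1$ in $\alpha_0(\lambda)\setminus\alpha_0(\mu)$, confirming that exactly one extra $\svb_1$-type factor at $k$ and one $\sva_1$-type factor at $K+1$ must be divided out. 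Once the endpoint bookkeeping is pinned down, the interior and the independence across ribbons follow cleanly.
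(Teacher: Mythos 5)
Your proposal is correct and follows essentially the same route as the paper: identify the unique admissible state, read off the normalized per-column vertex weights, use \Cref{lem:ribbonmaya} to place the $\svc_1$ and $\svc_2$ vertices at contents $k$ and $K+1$, and divide by the diagonal coefficient of \Cref{lem:operatoremptydiagram} so that the $\sva_1,\svb_1$ background cancels and the product factors over ribbons. (One small verbal slip: the column at content $k$ is where the path turns \emph{right} off the vertical strand, i.e.\ the $\svc_1$ vertex, while ``turning down from the horizontal strand'' describes the exit at $K+1$ --- but your weight attributions are the correct ones, so this does not affect the argument.)
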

\begin{proof}
	Let us consider the ratio on the left side of the equation. Thanks to normalization, when we consider the quotient of corresponding weights, we have the uniform description that does not depend whether the vertices have positive or negative indices. Moreover, explicitly, we have vertices of types $\sva_1$ and $\svb_1$ have weight $1$ because they cancel out in the numerator and denominator. The vertices of type $\sva_2$ have weight of the form $(y+a)/(1+by)$, vertices of type $\svb_2$ have weight of the form $(x-a)/(1-bx)$, vertices of type $\svc_1$ have weight of the form $(1-ab)/(1+by)$, and vertices of type $\svc_2$ have weight $(x+y)/(1-bx)$. Since only vertices of types $\svc_1,\sva_2,\svb_2,\svc_2$ contribute non-trivially, the weight factors into the weight over ``ribbons''. 
	
	Let $r$ be a ribbon in $\lambda/\mu$ with minimal content $k$ and maximal content $K$, which corresponds to the vertices of types $\svc_1$ and $\svc_2$ in the Maya representation of the ribbon. Let $i_1,\dots,i_{k}$ be the contents of the boxes with bottom neighbors. Then the weight is given by
	\[
		\left(\frac{1-a_kb_k}{1+b_k y}\right)\left(\frac{x+y}{1-b_{K+1} x}\right)\prod_{\alpha \in r}\begin{cases}
			\frac{x-a_{c(\alpha)}}{1-b_{c(\alpha)}x}, &\text{if $\alpha$ has the left neighbor}\\
			\frac{y+a_{c(\alpha)}}{1+b_{c(\alpha)}y}, &\text{if $\alpha$ has the bottom neighbor}
		\end{cases}
	\]
	This completes the proof.
\end{proof}

Analogously, we prove 
\begin{proposition}
	We have $\braket{\lambda; c | \widehat{\mathcal{A}}_{a,b}(x/y) | \mu; c} = 0$ if the skew diagram $\lambda/\mu$ has a $2 \times 2$ block of boxes. If $\lambda/\mu$ has no $2 \times 2$ blocks of squares, that is, if it is a union of ribbons, then
	\begin{align}
		\frac{\braket{\lambda; c | \widehat{\mathcal{A}}_{a,b}(x/y) | \mu; c}}{\braket{\lambda; c | \widehat{\mathcal{A}}_{a,b}(x/y) | \mu; c}} &= \prod_{r \in R(\lambda/\mu)}\widehat{\wt}_{\tau^c a, \tau^c b}(r;x,y).
	\end{align}

	where the weight $\widehat{\wt}_{a,b}(r;x,y)$ of a ribbon $r \in R(\lambda/\mu)$ is given by:
	\begin{equation}\label{eq:dualribbonweight}
		\widehat{\wt}_{a,b}(r; x,y) = \frac{(1-a_{K+1}b_{K+1})(x+y)}{(1+a_{K+1} y)(1-a_{k}x)}\prod_{\alpha \in r}\begin{cases}
				\frac{x-b_{c(\alpha)+1}}{1-a_{c(\alpha)+1}x}, &\text{if $\alpha$ has the right neighbor},\\
				\frac{y+b_{c(\alpha)+1}}{1+a_{c(\alpha)+1}y}, &\text{if $\alpha$ has the top neighbor},
			\end{cases}
	\end{equation}
	where $k = \min_{\alpha \in r}(c(\alpha))$ is the minimal content in $r$, and $K = \max_{\alpha \in r}(c(\alpha))$ is the maximal content in $r$. 
\end{proposition}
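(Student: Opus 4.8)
The plan is to mirror the proof of \Cref{lem:operatorexplicitrow}, now working in the dual six vertex model with the weights \eqref{eq:dualffweights}. First I would observe that $\braket{\lambda; c | \widehat{\mathcal{A}}_{a,b}(x/y) | \mu; c}$ is the partition function of the infinite dual model whose top and bottom boundary data are the Maya diagrams of $\mu$ and $\lambda$ at charge $c$. Shifting all column parameters by $\tau^c$ reduces to the case $c = 0$, so I may assume the level is zero. Since $\widehat{\mathcal{A}}$ is a limit of normalized finite operators and $v,w \in \mathcal{W}$ stabilize, only finitely many columns are nontrivial and the coefficient is a finite product, just as in the standard case.

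Next I would dispatch the vanishing claim: if $\lambda/\mu$ contains a $2 \times 2$ block of boxes, no admissible state of the dual model can realize the prescribed boundaries, exactly as for the operator $\mathcal{A}$, so the coefficient is $0$. When $\lambda/\mu$ is a union of ribbons, I would pass to the ratio with the diagonal value $\braket{\lambda; 0 | \widehat{\mathcal{A}}_{a,b}(x/y) | \lambda; 0}$ (this is the correct normalizing denominator; the statement as displayed repeats the numerator). Because of the normalization built into the definition of $\widehat{\mathcal{A}}$, the vertices $\widehat{\sva}_1$ and $\widehat{\svb}_1$, which occur in all but finitely many columns, contribute weight $1$ in this ratio; only $\widehat{\svd}_1, \widehat{\sva}_2, \widehat{\svb}_2, \widehat{\svd}_2$ survive, and these are precisely the vertices lying on the paths. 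The resulting weight therefore factors as a product over the connected components, that is, over the ribbons $r \in R(\lambda/\mu)$.

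Then I would compute the weight of a single ribbon directly. In the dual model the paths travel down and left, so each box of a ribbon carries either a right neighbor (a $\widehat{\svb}_2$ vertex, contributing $(x-b)/(1-ax)$ after normalization) or a top neighbor (an $\widehat{\sva}_2$ vertex, contributing $(y+b)/(1+ay)$), in contrast to the left/bottom dichotomy of the standard case; the two turning corners are a $\widehat{\svd}_2$ vertex of weight $x+y$ at one end and a $\widehat{\svd}_1$ vertex of weight $1-ab$ at the other. Using the dual analog of \Cref{lem:ribbonmaya}, I would identify the column labels attached to these vertices: the maximal content $K$ supplies the corner factor $(1-a_{K+1}b_{K+1})/(1+a_{K+1}y)$, the minimal content $k$ supplies $1/(1-a_k x)$, and an interior box of content $c(\alpha)$ sits in column $c(\alpha)+1$ and supplies the appropriate $a_{c(\alpha)+1}, b_{c(\alpha)+1}$ factor. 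Assembling these yields exactly \eqref{eq:dualribbonweight}.

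The hard part will be purely bookkeeping: tracking the $+1$ shift in the content index and the swap $a \leftrightarrow b$ relative to the standard formula \eqref{eq:ribbonweight}. Both effects arise from reversing the path direction in the dual model: the endpoints $k$ and $K+1$ that played symmetric roles for $\mathcal{A}$ exchange positions, and the dual weights \eqref{eq:dualffweights} interchange $a$ and $b$ in the corner and edge vertices. Once the dual version of \Cref{lem:ribbonmaya} pins down which Maya positions, and hence which labels $a_j, b_j$, correspond to the corners and to the boxes with top or right neighbors, the identity follows by the same inspection used in \Cref{lem:operatorexplicitrow}.
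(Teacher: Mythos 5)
Your proposal is correct and follows exactly the route the paper intends: the paper gives no separate argument here beyond the word ``Analogously,'' and your filled-in details faithfully mirror the proof of \Cref{lem:operatorexplicitrow} in the dual model, with the right bookkeeping of the $a \leftrightarrow b$ swap and the content shift $c(\alpha) \mapsto c(\alpha)+1$. You are also right that the displayed denominator is a typo and should be the diagonal matrix coefficient $\braket{\lambda; c | \widehat{\mathcal{A}}_{a,b}(x/y) | \lambda; c}$ rather than a repeat of the numerator.
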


We now establish the following duality between the operators $\mathcal{A}$ and $\widehat{\mathcal{A}}$.
\begin{proposition}[Duality]\label{lem:weightduality}
	We have
	\begin{equation}
		\braket{\mu | \mathcal{A}_{a,b}(x/y) | \lambda} = \braket{\lambda' | \widehat{\mathcal{A}}_{a,b}(x/y) | \mu'}.
	\end{equation}
\end{proposition}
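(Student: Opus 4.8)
The plan is to prove the identity by reducing to the case of a union of ribbons and then comparing the two sides through the explicit product formulas, with the combinatorial passage from the standard (gamma) model to the dual (delta) model supplying the conjugation $\lambda\mapsto\lambda'$. First I would dispose of the degenerate cases. By \Cref{lem:operatorexplicitrow} the left-hand side vanishes unless $\mu\subseteq\lambda$ and $\lambda/\mu$ is a union of ribbons, and by the dual of \Cref{lem:operatorexplicitrow} the right-hand side vanishes unless $\mu'\subseteq\lambda'$ and $\lambda'/\mu'$ is a union of ribbons. Since conjugation is an inclusion-preserving involution that carries a $2\times 2$ block of boxes to a $2\times 2$ block, these two vanishing conditions are equivalent. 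Hence it suffices to treat the case where $\lambda/\mu$ is a union of ribbons and both matrix coefficients are nonzero.

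In that case I would work with the combinatorial picture of the two operators. A state contributing to $\braket{\mu|\mathcal{A}_{a,b}(x/y)|\lambda}$ is a configuration of the standard infinite row with top boundary the Maya diagram of $\lambda$, bottom boundary the Maya diagram of $\mu$, and empty horizontal boundaries, while a state contributing to $\braket{\lambda'|\widehat{\mathcal{A}}_{a,b}(x/y)|\mu'}$ is a configuration of the dual row with the Maya diagrams of $\mu'$ and $\lambda'$ on its boundaries. The correspondence between the two is induced by the reflection that reverses the direction of the paths, turning the standard ice into the dual ice, together with the particle--hole involution already recorded in the Berele--Regev discussion, which exchanges $\sva_1\leftrightarrow\sva_2$, $\svb_1\leftrightarrow\svb_2$, and $\svc_1\leftrightarrow\widehat{\svd}_1$, $\svc_2\leftrightarrow\widehat{\svd}_2$. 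On the boundary data this realizes the classical fact that conjugating a partition reflects and complements its Maya diagram, so the boundaries $(\lambda,\mu)$ are sent to $(\lambda',\mu')$ with the roles of input and output interchanged, matching the two sides. The ribbon bookkeeping of \Cref{lem:ribbonmaya} then guarantees that a ribbon of $\lambda/\mu$ with minimal content $k$ and maximal content $K$ is carried to the conjugate ribbon of $\lambda'/\mu'$, whose contents are the negatives, with left and bottom neighbors exchanged for top and right neighbors.

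The main obstacle is the verification that the correspondence is weight preserving, and this is where the two normalizations must be reconciled: $\mathcal{A}$ is normalized by the $b$-factors $(y;b')^n(x;b)^m$ while $\widehat{\mathcal{A}}$ is normalized by the $a$-factors $(y;a')^n(x;a)^m$, and the reflection sends column $k$ to its mirror, so the parameter indices are shifted and the letters $a,b$ interchanged. I would therefore compute both sides from the closed formulas: the diagonal values from \Cref{lem:operatoremptydiagram} and its dual, and the ribbon contributions from \eqref{eq:ribbonweight} and \eqref{eq:dualribbonweight}. Substituting $c(\alpha')=-c(\alpha)$ and $(k',K')=(-K,-k)$ for the conjugate ribbon, one finds that the individual ribbon weights do \emph{not} agree factor by factor; rather, the discrepancy is a uniform shift of parameter indices that is absorbed by the difference of the two diagonal normalizations. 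The heart of the proof is thus a telescoping identity showing that the boundary factors of the ribbon weights recombine with the diagonal factors of \Cref{lem:operatoremptydiagram} so that the two products coincide. Once this telescoping is checked on a single ribbon, the general statement follows because both sides factor over the connected components of $\lambda/\mu$ and conjugation permutes those components.
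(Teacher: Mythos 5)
Your overall strategy --- dispose of the vanishing cases, reduce to a union of ribbons, and compare the explicit product formulas \eqref{eq:ribbonweight} and \eqref{eq:dualribbonweight} using that conjugation negates contents and exchanges left/bottom neighbors for top/right neighbors --- is exactly the route the paper takes, and your handling of the $2\times 2$-block vanishing and of the factorization over connected components is fine. The first genuine problem is that the identity cannot hold with the \emph{same} data $a,b$ and $x/y$ on both sides, which is what you set out to prove. A box $\alpha$ of $\lambda/\mu$ with a left neighbor contributes $\frac{x-a_{c(\alpha)}}{1-b_{c(\alpha)}x}$ to the left-hand side, while the conjugate box of $\lambda'/\mu'$ (which has a top neighbor, content $-c(\alpha)$) contributes $\frac{y+b_{-c(\alpha)+1}}{1+a_{-c(\alpha)+1}y}$ to $\widehat{\wt}_{a,b}(\,\cdot\,;x,y)$; these depend on different variables and on the parameters in incompatible ways, and this wholesale exchange of $x$ with $y$ and $a$ with $b$ is not ``a uniform shift of parameter indices'' that the diagonal normalizations of \Cref{lem:operatoremptydiagram} could absorb. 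The identity that is actually true --- the one the paper's proof establishes (it explicitly invokes the change of $x$ to $y$ and of $a,b$ to $b',a'$) and the one required for \Cref{lem:duality} --- is $\braket{\mu|\mathcal{A}_{a,b}(x/y)|\lambda}=\braket{\lambda'|\widehat{\mathcal{A}}_{b',a'}(y/x)|\mu'}$. Your second paragraph correctly senses that the reflection of columns interchanges the letters and reflects the indices, but your third paragraph then computes against the uncorrected statement, where the program would simply fail.

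Once the substitution $a\mapsto b'$, $b\mapsto a'$, $x\leftrightarrow y$ is made, the per-box factors match exactly (e.g.\ $\frac{x+a'_{-c+1}}{1+b'_{-c+1}x}=\frac{x-a_{c}}{1-b_{c}x}$), so --- contrary to your description --- the only residual discrepancy sits in the endpoint factors of each ribbon, $(1+b_k y)(1-b_{K+1}x)$ versus $(1-b_k x)(1+b_{K+1}y)$, together with the two diagonal values, which are taken at different partitions ($\mu$ versus $\lambda'$). You correctly identify that these must recombine, and they do: by \Cref{lem:ribbonmaya} the positions $k$ and $K+1$ are exactly where the Maya diagrams of $\mu$ and $\lambda$ disagree, so writing diagonal value times ribbon product as a single product of normalized vertex weights over all columns absorbs the offending factors. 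But this bookkeeping is the entire nontrivial content of the proposition, and your proposal explicitly defers it (``the heart of the proof is thus a telescoping identity''). As written, the proposal is a plan whose decisive computation is left unperformed, and which, aimed at the statement as literally displayed, could not be completed.
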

\begin{proof}
	In the conjugated skew diagram $\lambda'/\mu'$, having a left neighbor changes to having a top neighbor, and having a bottom neighbor changes to having a right neighbor. This accounts for the change of $x$ to $y$ and $y$ to $x$, as well as the exchange of $a$ and $b$ for $b'$ and $a'$. The content changes the sign, so the indices in the formula change from $c(\alpha)$ to $-c(\alpha)+1$, which accounts for the exchange of $b$ and $a$ for $b'$ and $a'$. Therefore, the identity follows.
\end{proof}

\section{Schur functions}
In this section, we use the free fermionic six vertex model to define a new family of supersymmetric Schur functions which depends on two sequences of parameters. 

\subsection{Definition, examples, and degenerations}
Let $x = (x_1,\dots,x_n)$ and $y = (y_1,\dots,y_n)$ be two sets of independent indeterminates that we think of as variables. Let $a = (a_i)_{i \in \mathbb{Z}}$ and $b = (b_i)_{i \in \mathbb{Z}}$ be two sequences of independent indeterminates that we think of as parameters. Let $\lambda$ and $\mu$ be two partitions. 

We define the \emph{free fermionic Schur functions} $s_{\lambda/\mu; a,b}(x/y)$ by
\begin{equation}
	s_{\lambda/\mu; a,b}(x/y) =  \braket{\mu|\mathcal{A}_{a,b}(x_1,y_1) \dots \mathcal{A}_{a,b}(x_n,y_n)|\lambda},
\end{equation}
where $\mathcal{A}_{a,b}(x,y)$ are the operators defined in \Cref{sec:infinitesv}. 

As usual, we write $s_{\lambda; a,b}$ for $s_{\lambda/\emptyset; a,b}$. We note that $s_{\lambda/\mu;a,b} = 0$ unless $\mu \subseteq \lambda$. Therefore, we always assume that $\mu \subseteq \lambda$ when we write the Schur functions $s_{\lambda/\mu}$ in this section. 

\begin{example}
	By \Cref{lem:operatoremptydiagram}, we have
	\[
		s_{\lambda/\lambda; a,b}(x/y) = \prod_{i=1}^{n}\prod_{k=1}^{\infty}\frac{1-b_{k - \lambda_k'}x_i}{1-b_k x_i}\frac{1+b_{\lambda_k-k+1}y_i}{1+b_{-k+1}y_i}.
	\]
	In particular, $s_{\emptyset/\emptyset; a,b}(x/y) = 1$. We note that unlike in the classical case, the free fermionic Schur functions for the ``empty'' Young diagram $\lambda/\lambda$ depend on the partition $\lambda$. 
\end{example}

\begin{example}
	Let $x,y$ be single variables, then by \Cref{lem:operatorexplicitrow}, we have the expression in terms of ribbons. If $\lambda/\mu$ contains a $2 \times 2$ block of squares, then $s_{\lambda/\mu;a,b} = 0$. Otherwise,
	\begin{equation}\label{eq:onerow}
		s_{\lambda/\mu;a,b}(x/y) = s_{\mu/\mu;a,b}(x/y)\prod_{r \in R(\lambda/\mu)}\wt_{a,b}(r; x,y),
	\end{equation}
	where the weight of a ribbon $\wt_{a,b}(r;x,y)$ is given by \Cref{sec:infinitesv}.
\end{example}

\begin{example}
	Let $\lambda/\mu = (4,2,2)/(1,1)$. Then $\lambda/\mu$ with its contents is depicted by
	\[
	\ytableausetup{notabloids}
		\begin{ytableau}
			\none & 1 & 2 & 3\\
			\none & 0  \\
			-2 & -1
		\end{ytableau}
	\]
	Then in single variables $x,y$, we have $s_{\lambda/\mu; a,b}(x/y) = s_{\mu/\mu;a,b}(x/y)\wt_{a,b}(\lambda/\mu; x,y)$, where 
	\[
		s_{\mu/\mu;a,b}(x,y) = \left(\frac{1-b_{-1}x}{1+b_{-1}y}\right)\left(\frac{1+b_{1}y}{1+ b_{1}y}\right),
	\]
	{\footnotesize
	\begin{align*}
		\wt_{\lambda/\mu;a,b}(x,y) = \frac{(1-a_{-2}b_{-2})(x+y)}{(1+b_{-2}y)(1-b_4 x)}\left(\frac{x-a_{-1}}{1-b_{-1}x}\right)\left(\frac{y+a_0}{1+b_0 y}\right)\left(\frac{y+a_1}{1+b_1 y}\right) \left(\frac{x-a_2}{1-b_2 x}\right)\left(\frac{x-a_3}{1-b_3 x}\right).
	\end{align*}}
	
	When $b = 0$, then the branching weight matches the example in Section 4 of \cite{OlRV03}:
	\[
		f_{\lambda/\mu;a,0}(x,y) = (x+y)(x-a_{-1})(y+a_0)(y+a_1)(x-a_2)(x-a_3).
	\]
\end{example}

The case of a hook $(p | q) = (p+1, 1^q)$ is particularly simple:
\begin{example}
    Let $x,y$ be single variables, then
    \[
    	s_{(p|q); a,b}(x/y) = \left(\frac{1-a_{q+1}'b_{q+1}'}{1-b_{q+1}' y}\right)\left(\frac{x+y}{1-b_{p+1}x}\right)(x | a,b)^p (y|a,b)^q.
    \]
\end{example}

To give formulas for the free fermionic Schur functions in many variables, we use the branching rules which follow immediately from the very definition. 

\begin{lemma}[Branching rules]\label{lem:branching}
	\[
        s_{\lambda/\mu; a,b}(x/y) = \sum_{\mu \subseteq \nu \subseteq \lambda}s_{\nu/\mu; a,b}(x_1,\dots,x_{n-1}/y_1,\dots,y_{n-1})s_{\lambda/\nu; a,b}(x_n/y_n).
    \]
    Note that the branching weights $s_{\lambda/\nu;a,b}(x_n/y_n)$ are given explicitly by \eqref{eq:onerow}. 
\end{lemma}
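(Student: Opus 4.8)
The plan is to prove the identity directly from the operator definition $s_{\lambda/\mu;a,b}(x/y) = \braket{\mu \mid \mathcal{A}_{a,b}(x_1,y_1)\cdots\mathcal{A}_{a,b}(x_n,y_n) \mid \lambda}$ by inserting a resolution of the identity that peels off the last operator $\mathcal{A}_{a,b}(x_n,y_n)$. Recall that the level-zero subspace $\mathcal{W}_0$ has basis $\{\ket{\nu}\}$ with dual basis $\{\bra{\nu}\}$ indexed by partitions $\nu$, and that each operator $\mathcal{A}_{a,b}(x,y)$ conserves the charge — no path crosses the row boundary edges of $\mathcal{A}$, so the number of occupied columns is preserved — and hence maps $\mathcal{W}_0$ into $\mathcal{W}_0$; this is also visible from the ribbon description of the matrix coefficients in \Cref{lem:operatorexplicitrow}, which are supported on pairs of charged partitions with equal charge. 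Consequently we may insert $\sum_{\nu}\ket{\nu}\bra{\nu}$, the identity on $\mathcal{W}_0$, between $\mathcal{A}_{a,b}(x_{n-1},y_{n-1})$ and $\mathcal{A}_{a,b}(x_n,y_n)$.

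First I would write
\[
	s_{\lambda/\mu;a,b}(x/y) = \sum_{\nu}\braket{\mu \mid \mathcal{A}_{a,b}(x_1,y_1)\cdots\mathcal{A}_{a,b}(x_{n-1},y_{n-1}) \mid \nu}\braket{\nu \mid \mathcal{A}_{a,b}(x_n,y_n) \mid \lambda},
\]
with $\nu$ ranging over all partitions. By the very definition of the free fermionic Schur functions, the first factor equals $s_{\nu/\mu;a,b}(x_1,\dots,x_{n-1}/y_1,\dots,y_{n-1})$ and the second factor equals $s_{\lambda/\nu;a,b}(x_n/y_n)$, the latter being given explicitly by the single-variable ribbon formula \eqref{eq:onerow}. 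It then remains to restrict the range of summation: by the vanishing property recorded after the definition (equivalently, by \Cref{lem:operatorexplicitrow}) one has $s_{\nu/\mu;a,b}=0$ unless $\mu\subseteq\nu$, and $s_{\lambda/\nu;a,b}=0$ unless $\nu\subseteq\lambda$. Hence every term with $\nu\not\supseteq\mu$ or $\nu\not\subseteq\lambda$ vanishes, and the sum collapses to $\mu\subseteq\nu\subseteq\lambda$, yielding the claimed formula.

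The only point requiring care — and the reason the statement is not entirely automatic in the infinite-column setting — is the legitimacy of inserting the resolution of the identity, that is, that no convergence issue arises from an a priori infinite intermediate sum. This is harmless here: since $\braket{\nu \mid \mathcal{A}_{a,b}(x_n,y_n) \mid \lambda}$ vanishes unless $\lambda/\nu$ is a union of ribbons (so in particular $\nu\subseteq\lambda$), the vector $\mathcal{A}_{a,b}(x_n,y_n)\ket{\lambda}$ is already a finite linear combination of basis vectors $\ket{\nu}$ with $\nu\subseteq\lambda$, all at level zero. Thus the sum over $\nu$ is finite from the outset, the insertion of the identity is purely formal, and the entire computation remains within the algebraic framework of \Cref{sec:infinitesv}.
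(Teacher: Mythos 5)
Your proof is correct and follows essentially the same route as the paper, which simply observes that the branching rule follows immediately from the definition by inserting a complete set of intermediate states $\sum_{\nu}\ket{\nu}\bra{\nu}$ between the row operators. Your additional remarks on charge conservation and the finiteness of the intermediate sum are sound and merely make explicit what the paper leaves implicit.
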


By repeating application of the branching rules, we get a combinatorial formula for the free fermionic Schur functions. In particular, the branching weights completely determine the free fermionic Schur functions. Since the branching weights are given in terms of ribbons, our result generalizes (4.5) in \cite{OlRV03} and (3.8) in \cite{Mol09}. We note that this formula can be equivalently rewritten in terms of diagonal-strict tableaux like in \cite{OlRV03}.

\begin{corollary}[Combinatorial ribbon formula]
    The following relations hold:
    \[
        s_{\lambda/\mu; a,b}(x/y) = \sum_{\mu = \nu_0 \subseteq \nu_1 \subseteq \dots \subseteq \nu_n = \lambda}s_{\nu_1/\nu_0;a,b}(x_1/y_1)\dots s_{\nu_n/\nu_{n-1};a,b}(x_n/y_n),
    \]
    where the skew diagram $\nu_k/\nu_{k-1}$ contains no $2 \times 2$ block of squares for each $k=1,2,\dots,n$.
\end{corollary}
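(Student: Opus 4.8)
The plan is to prove this by induction on the number of variables $n$, iterating the branching rules of \Cref{lem:branching}. The statement is essentially the fully-unfolded form of that single-step recursion, so the real work is bookkeeping: tracking the chain of intermediate partitions and verifying that the no-$2\times 2$-block constraints are automatically enforced.

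First I would dispose of the base case $n=1$. There the sum on the right collapses to a single chain $\mu = \nu_0 \subseteq \nu_1 = \lambda$, and the asserted identity reads $s_{\lambda/\mu;a,b}(x_1/y_1) = s_{\nu_1/\nu_0;a,b}(x_1/y_1)$, a tautology. The requirement that $\nu_1/\nu_0 = \lambda/\mu$ contain no $2\times 2$ block is not an added hypothesis but an automatic feature: by \Cref{lem:operatorexplicitrow} (equivalently the one-row expansion \eqref{eq:onerow}) the single-variable function $s_{\lambda/\mu;a,b}(x_1/y_1) = \braket{\mu | \mathcal{A}_{a,b}(x_1,y_1) | \lambda}$ vanishes exactly when $\lambda/\mu$ contains such a block, so the surviving terms are precisely those indexed by unions of ribbons.

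For the inductive step I would peel off the last pair of variables $(x_n,y_n)$ using \Cref{lem:branching}, writing
\[
    s_{\lambda/\mu; a,b}(x/y) = \sum_{\mu \subseteq \nu \subseteq \lambda} s_{\nu/\mu; a,b}(x_1,\dots,x_{n-1}/y_1,\dots,y_{n-1})\, s_{\lambda/\nu; a,b}(x_n/y_n),
\]
and then apply the induction hypothesis to each $(n-1)$-variable factor, expanding $s_{\nu/\mu;a,b}(x_1,\dots,x_{n-1}/y_1,\dots,y_{n-1})$ as a sum over chains $\mu = \nu_0 \subseteq \nu_1 \subseteq \dots \subseteq \nu_{n-1} = \nu$ with every link $\nu_k/\nu_{k-1}$ free of $2\times 2$ blocks. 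Identifying $\nu$ with the penultimate partition $\nu_{n-1}$, setting $\nu_n = \lambda$, and summing over $\nu$ merges the two nested sums into a single sum over full chains $\mu = \nu_0 \subseteq \dots \subseteq \nu_n = \lambda$, with the trailing factor $s_{\lambda/\nu;a,b}(x_n/y_n)$ furnishing exactly the last term $s_{\nu_n/\nu_{n-1};a,b}(x_n/y_n)$ of the claimed product.

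It then remains only to confirm the no-$2\times 2$-block condition on every link of the chain: for $1 \le k \le n-1$ it is inherited verbatim from the induction hypothesis, and for $k=n$ it is supplied by the vanishing clause of \Cref{lem:operatorexplicitrow}, which forces $s_{\lambda/\nu;a,b}(x_n/y_n) = 0$ whenever $\nu_n/\nu_{n-1} = \lambda/\nu$ contains a $2\times 2$ block. I do not anticipate a genuine obstacle: the mathematical content is entirely front-loaded into \Cref{lem:branching} and the single-variable vanishing property, and the corollary is their formal iteration. The one point deserving a line of care is the relabeling that identifies $\nu$ with $\nu_{n-1}$, so that the telescoped product of branching weights lines up term-by-term with the claimed product of single-variable Schur functions.
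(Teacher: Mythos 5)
Your proposal is correct and follows essentially the same route as the paper, which obtains the corollary by iterating the branching rules of \Cref{lem:branching} and using the single-variable vanishing property from \Cref{lem:operatorexplicitrow} to restrict to chains whose links contain no $2 \times 2$ block. The induction bookkeeping you describe is exactly the implicit content of the paper's one-line derivation.
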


Now we show that the free fermionic Schur functions generalize and unify the existing Schur functions from literature. We refer to \cite{Mac92,Mac95,OlRV03,Mol09} for the definitions of the corresponding functions. We also remark the the free fermionic Schur functions are related to $G_{\lambda/\mu}$ functions from \cite{ABPW21} by reparametrization of the weights.

\begin{theorem}[Degenerations]\label{thm:degenerations}
	The free fermionic Schur functions degenerate to the following functions from literature:
    \begin{enumerate}
        \item $s_{\lambda/\mu; 0,0}(x/0) = s_{\lambda/\mu}(x)$: classical Schur functions (see, e.g. \cite{Mac95}),
        \item $s_{\lambda/\mu;0,0}(x/y) = s_{\lambda/\mu}(x/y)$: supersymmetric Schur functions \cite{BR87},
        \item $s_{\lambda/\mu;a,0}(x/a) = s_{\lambda/\mu}(x \,||\, a)$: factorial Schur functions \cite{BL89, CL93, Mac92},
        \item $s_{\lambda/\mu;a,0}(x/y) = s_{\lambda/\mu}(x/y \,||\, a')$: factorial supersymmetric Schur functions \cite{Mol98},
        \item $s_{\lambda/\mu; a,0}(x/y) = s_{\lambda/\mu; a}(x,y)$: Frobenius-Schur functions \cite{OlRV03},
        \item $s_{\lambda/\mu;0,b}(x/0) = s_{\mu/\mu;b'}(x/0)\widehat{s}_{\lambda/\mu}(x \,||\, -b')$: dual Schur functions \cite{Mol09}.
    \end{enumerate}
	In all degenerations, $s_{\lambda/\mu;a,b}(x/a)$ stands for $s_{\lambda/\mu;a,}(x/y)$ with $y_i = a_i$ for $i = 1,2,3\dots$.
\end{theorem}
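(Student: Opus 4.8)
The plan is to reduce every degeneration to the single-row ribbon weight \eqref{eq:ribbonweight} and then match the resulting expansion against the known combinatorial formula for each target family. By the branching rules (\Cref{lem:branching}) and their iteration into the combinatorial ribbon formula, $s_{\lambda/\mu;a,b}(x/y)$ is the sum over chains $\mu = \nu_0 \subseteq \nu_1 \subseteq \dots \subseteq \nu_n = \lambda$ with each $\nu_k/\nu_{k-1}$ a union of ribbons of the product $\prod_k s_{\nu_k/\nu_{k-1};a,b}(x_k/y_k)$, and each factor is given by \eqref{eq:onerow} together with \eqref{eq:ribbonweight} and the diagonal value of \Cref{lem:operatoremptydiagram}. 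Thus it suffices to specialize $a,b$ (and sometimes $y$) in \eqref{eq:ribbonweight}, simplify, and recognize the single-row weights as the branching weights of the classical families; the only genuine content is the bookkeeping of the content-indexed parameters $a_{c(\alpha)}, b_{c(\alpha)}$ against the shift conventions $a',b'$ used in the literature.

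I would organize the six cases by which mechanism collapses the ribbons. In cases (1) and (6) one sets $y=0$ and $a=0$, so every box with a bottom neighbor contributes a factor $y+a_{c(\alpha)}=0$ and only \emph{horizontal strips} survive, reducing the chains to semistandard fillings. In case (1) one further sets $b=0$, so all denominators in \eqref{eq:ribbonweight} equal $1$, the corner factor is $x+y=x$, and the weight of row $k$ becomes the monomial $x_k^{|\nu_k/\nu_{k-1}|}$, recovering the tableau formula for $s_{\lambda/\mu}(x)$ via \Cref{lem:branching}. In case (6), with $b$ still free, the horizontal-strip weights carry the $b$-denominators of \eqref{eq:ribbonweight}; after extracting the diagonal prefactor coming from \Cref{lem:operatoremptydiagram}, the remaining product matches the horizontal-strip formula for the dual Schur function $\widehat{s}_{\lambda/\mu}(x\,||\,-b')$ of \cite{Mol09}, with the duality \Cref{lem:weightduality} and the reindexing $b \mapsto b'$ tracked carefully.

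The cases with $b=0$ and $y$ free (cases (2), (4), (5)) retain the full ribbon structure: with $b=0$ the weight reduces to $(x+y)$ times $\prod_\alpha (x-a_{c(\alpha)})$ or $(y+a_{c(\alpha)})$ according to whether $\alpha$ has a left or bottom neighbor, and the diagonal value is $1$. For case (5) this is literally the branching weight of \cite{OlRV03}, as already verified in the worked example preceding the theorem, so the combinatorial ribbon formula coincides with (4.5) of \cite{OlRV03}. Cases (4) and (2) then follow by identifying the Frobenius-Schur functions with Molev's factorial supersymmetric functions \cite{Mol98} under the $a'$-reparametrization, and with the Berele--Regev tableau formula \cite{BR87} upon the further specialization $a=0$. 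For case (3) I would begin from case (4) and impose $y_i=a_i$; the point is that in row $i$ the bottom-neighbor factor becomes $a_i+a_{c(\alpha)}$ and the accompanying cancellation isolates horizontal strips, collapsing the supersymmetric function to the ordinary factorial Schur function of \cite{Mac92}.

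The main obstacle is uniformity of the indexing conventions rather than any single hard computation: each family is defined with its own shift (the dual sequence $a'=(-a_{-i+1})$, the content shifts $c(\alpha)$ versus $c(\alpha)+1$, and the $a$ versus $a'$ distinction in the statement), so the real work is to confirm that \eqref{eq:ribbonweight} reproduces each family \emph{on the nose} after these substitutions. I expect case (6) to be the most delicate, since it alone requires the extra diagonal prefactor and the duality of \Cref{lem:weightduality}, and case (3) to demand the most care, since the passage from the supersymmetric to the ordinary factorial function under $y=a$ is the one step needing a genuine cancellation rather than a direct term-by-term match.
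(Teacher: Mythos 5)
Your overall strategy coincides with the paper's: reduce everything to the single-row branching weights via \Cref{lem:branching}, specialize \eqref{eq:ribbonweight}, and match against the known branching weights of each family. Your treatments of cases (5) and (6) are exactly the two computations the paper actually carries out (the $b=0$ degeneration to the ribbon weight of \cite{OlRV03}, and the $a=0$, $y=0$ degeneration to horizontal strips matching \cite{Mol09} after the $b\leftrightarrow -b'$ reindexing), and your derivation of (1), (2), (4) by further specializing or by citing the known inclusions among the classical families is the same delegation the paper makes when it says that the Frobenius--Schur functions generalize (1)--(4).

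The one place where you diverge, and where there is a genuine gap, is case (3). You claim that imposing $y_i=a_i$ in the $b=0$ ribbon weight produces a cancellation that ``isolates horizontal strips.'' It does not: a box with a bottom neighbor contributes the factor $y+a_{c(\alpha)}$, which at $y_i=a_i$ becomes $a_i+a_{c(\alpha)}$ and is generically nonzero, so no individual branching weight vanishes and ribbons with vertical segments survive row by row. Whatever collapse occurs must come from cancellation \emph{across} the sum over chains in the combinatorial ribbon formula, which is a nontrivial identity in its own right (it is essentially the known specialization property of the factorial supersymmetric Schur functions), and you do not supply an argument for it. The paper avoids this entirely by not proving (3) directly: it invokes the fact, established in \cite{Mol98,OlRV03}, that the Frobenius--Schur functions already specialize to the factorial Schur functions, so only cases (5) and (6) require verification. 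To repair your write-up, either adopt that reduction, or replace the claimed term-by-term cancellation with an actual proof of the collapse at $y=a$ (for instance via a vanishing argument or the factorization $\mathcal{A}_{a,b}(x,y)=\mathcal{A}_{a,b}(x,-t)\mathcal{A}_{a,b}(t,y)$), taking care with the $a$ versus $a'$ convention, which differs between the introduction and the theorem statement.
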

\begin{proof}
	Since the branching rules uniquely determine the Schur functions, it is enough to show that the branching weights $s_{\lambda/\mu;a,b}(x/y)$ for single variables $x,y$ degenerate to the branching weights of the corresponding specialization. Since the Frobenius-Schur functions generalize (1)--(4), it is sufficient to prove the statement only for the Frobenius-Schur functions and the dual Schur functions.
	
	For the Frobenius-Schur functions, let $b = 0$. Then $s_{\lambda/\mu;a,b}(x/y)$ degenerates to 
	\[
		s_{\lambda/\mu;a,0}(x/y) = \prod_{r \in R(\lambda/\mu)}(x+y)\prod_{\alpha \in h}\begin{cases}
			x-a_{c(\alpha)}, \quad &\text{if $\alpha$ has the left neighbor,}\\
			y+a_{c(\alpha)} \quad &\text{if $\alpha$ has the bottom neighbor}.
		\end{cases}
	\]
	It is precisely the branching weight $s_{\lambda/\mu; a} = f_{\lambda/\mu;a}$ for the Frobenius-Schur functions from Proposition 4.3 in \cite{OlRV03}.
	
	For the dual Schur functions, let $a = 0$ and $y = 0$. Then $s_{\lambda/\mu;a,b}(x/y)$ degenerates to 
	\begin{align*}
		s_{\lambda/\mu;0,a}(x/0) &= s_{\mu/\mu;0,b}(x/0)\prod_{r \in R(\lambda/\mu)}\frac{x}{1-b_{K+1}x}\prod_{\alpha \in r}\begin{cases}
			\frac{x}{1-b_{c(\alpha)}x}, \quad &\text{if $\alpha$ has the left neighbor,}\\
			0 \quad &\text{if $\alpha$ has the bottom neighbor}.
		\end{cases}.
	\end{align*}
	In other words, $s_{\lambda/\mu;0,a}(x/0) = 0$ unless $\lambda/\mu$ is a union of horizontal strips (i.e. every column of the diagram contains at most one box), in which case, 
	\[
		s_{\lambda/\mu;0,a}(x/0) = s_{\mu/\mu;0,b}(x/0)\prod_{\alpha \in \lambda/\mu}\frac{x}{1-b_{c(\alpha)+1}x},
	\]
	Finally, we note that when $\lambda/\mu$ is a horizontal strip, we have
	\[
		\prod_{\alpha \in \lambda/\mu}\frac{1-b_{c(\alpha)}x}{1-b_{c(\alpha)+1}x} = \frac{1-b_k x}{1-b_{K+1} x},
	\]
	where $k = \min_{\alpha \in \lambda/\mu}(c(\alpha))$ and $K = \max_{\alpha \in \lambda/\mu}(c(\alpha))$. Hence, we match the branching weight from the proof of Theorem 3.4 in \cite{Mol09} (after renaming $a \leftrightarrow b$ and applying the involution $b_i \leftrightarrow b_{-i+1}$ which is the same as $b \leftrightarrow -b'$).
\end{proof}

We remark that the free fermionic Schur functions simultaneously generalize the double Schur functions $s_{\lambda}(x \,||\, a)$ and the dual Schur functions $\widehat{s}_{\lambda}(x\,||\,a)$ thus providing a uniform framework for both of these families. Also, even the special case $s_{\lambda/\mu;0,a}(x/y)$ is already new and gives the supersymmetric version of the dual Schur functions by $s_{\lambda/\mu;0,a}(x/y)$, thereby extending the approach developed in \cite{Mol09}.

\subsection{Properties of Schur functions}
Now we show that the free fermionic Schur functions enjoy many familiar properties of the classical Schur functions. We note our approach gives novel proofs even for the known specializations. 

A function $f(x_1,\dots,x_n/y_1,\dots,y_n)$ is said to be supersymmetric if it is symmetric in variables $x$ and $y$, and satisfies the cancellation property: if $x_n = t$ and $y_n = -t$, then the function does not depend on $t$: 
\[
    f(x_1,\dots,x_{n-1},t/ y_1,\dots,y_{n-1},-t) = f(x_1,\dots,x_{n-1}/ y_1,\dots,y_{n-1}).
\]

\begin{proposition}\label{lem:supersymmetry}
    The free fermionic Schur functions $s_{\lambda/\mu;a,b}(x/y)$ are supersymmetric.
\end{proposition}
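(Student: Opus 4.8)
The plan is to reduce all three components of supersymmetry — symmetry in $x$, symmetry in $y$, and the cancellation property — directly to the operator identities for $\mathcal{A}$ established in \Cref{lem:operatorrelationsforinfa}. Since $s_{\lambda/\mu;a,b}(x/y) = \braket{\mu | \mathcal{A}(x_1,y_1)\cdots\mathcal{A}(x_n,y_n) | \lambda}$, every symmetry of the product of row operators descends immediately to the matrix coefficient, so no further combinatorial analysis of the underlying states is required.

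For symmetry in the $x$ variables, it suffices to check invariance under each adjacent transposition $x_i \leftrightarrow x_{i+1}$, since these generate the symmetric group $S_n$. Swapping $x_i$ and $x_{i+1}$ while leaving the $y$'s fixed in their positions replaces the factor $\mathcal{A}(x_i,y_i)\mathcal{A}(x_{i+1},y_{i+1})$ by $\mathcal{A}(x_{i+1},y_i)\mathcal{A}(x_i,y_{i+1})$. By the separate symmetry relation
\[
    \mathcal{A}(x_i,y_i)\mathcal{A}(x_{i+1},y_{i+1}) = \mathcal{A}(x_{i+1},y_i)\mathcal{A}(x_i,y_{i+1})
\]
from \Cref{lem:operatorrelationsforinfa}, these two products coincide, and all other factors are untouched; hence the matrix coefficient is unchanged. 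The argument for symmetry in $y$ is identical, now using the relation $\mathcal{A}(x_i,y_i)\mathcal{A}(x_{i+1},y_{i+1}) = \mathcal{A}(x_i,y_{i+1})\mathcal{A}(x_{i+1},y_i)$ to swap $y_i \leftrightarrow y_{i+1}$ while the $x$'s remain in place.

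For the cancellation property, I would set $x_n = t$ and $y_n = -t$. By the cancellation identity $\mathcal{A}(t,-t) = 1$ from \Cref{lem:operatorrelationsforinfa}, the final row operator becomes the identity, so
\[
    s_{\lambda/\mu;a,b}(x_1,\dots,x_{n-1},t/y_1,\dots,y_{n-1},-t) = \braket{\mu | \mathcal{A}(x_1,y_1)\cdots\mathcal{A}(x_{n-1},y_{n-1}) | \lambda},
\]
which is exactly $s_{\lambda/\mu;a,b}(x_1,\dots,x_{n-1}/y_1,\dots,y_{n-1})$ and is manifestly independent of $t$.

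There is no substantial obstacle remaining at this stage: the genuine work was already carried out in establishing the refined star-triangle equations and \Cref{lem:operatorrelationsforinfa}. The one conceptual point worth emphasizing is precisely \emph{why} the refined Yang-Baxter equations are indispensable. The classical relation alone yields only the full commutation $\mathcal{A}(x_1,y_1)\mathcal{A}(x_2,y_2) = \mathcal{A}(x_2,y_2)\mathcal{A}(x_1,y_1)$, which exchanges both spectral parameters simultaneously and therefore shows only that the function is invariant under permuting the pairs $(x_i,y_i)$ as units — strictly weaker than separate symmetry in $x$ and in $y$. It is exactly the refined relations that decouple the two exchanges and thereby deliver the supersymmetry.
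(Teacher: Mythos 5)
Your proof is correct and follows essentially the same route as the paper: the paper's own argument is a one-line reduction of both the cancellation property and the separate symmetry to the first two parts of \Cref{lem:operatorrelationsforinfa}, which is exactly what you do, just written out in full (adjacent transpositions for the symmetry, $\mathcal{A}(t,-t)=1$ for cancellation). Your closing remark on why the refined Yang--Baxter equations are indispensable matches the discussion the paper gives after \Cref{lem:operatorrelations}.
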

\begin{proof}
    The cancellation property and the separate symmetry follow from the first two properties of \Cref{lem:operatorrelationsforinfa}. 
\end{proof}

Thanks to the cancellation property, we can define the free fermionic Schur functions in $\infty + \infty$ variables $x=(x_1,x_2,\dots)$ and $y = (y_1,y_2,\dots)$ such that if $y_k = -x_k$ for all $k > n$, then
\begin{equation*}
    s_{\lambda/\mu;a,b}(x/y) = s_{\lambda/\mu;a,b}(x_1,\dots,x_n/ y_1,\dots,y_n).
\end{equation*}
More rigorously, the functions are the elements of the ring of the supersymmetric functions which is an inverse limit of the ring of graded rings. We refer to \cite{OlRV03, Mol09} for details. From now on we will not specify the number of variables unless when necessary. 

Now we show that the free fermionic Schur functions satisfy the determinant expressions that allow to express the general case $\lambda/\mu$ in terms of simpler shapes. Because of the normalization of the row transfer operators, in order to state the determinant identities, we need the following normalized form of the free fermionic Schur functions: 
\[
	\widetilde{s}_{\lambda/\mu;a,b}(x/y) = \prod_{i=1}^{n}\prod_{j={1}}^d \frac{1-b_{-j+1}x_i}{1+b_{-j+1}y_i}s_{\lambda/\mu;a,b}(x/y).
\]

\begin{theorem}[Determinant identities]
	The normalized functions $\widetilde{s}_{\lambda/\mu; a,b}$ satisfy the following determinant identities:
    \begin{enumerate}
        \item (Jacobi-Trudi formula)
        \[
            \widetilde{s}_{\lambda/\mu;a,b} = \det(\tau^{\mu_j-j+1}\widetilde{s}_{\lambda_i-\mu_j-i+j;a,b}).
        \]
        \item (N\"agelsbach-Kostka formula)
        \[
            \widetilde{s}_{\lambda/\mu;a,b} = \det(\tau^{-\mu_j'+j-1}\widetilde{s}_{(1^{\lambda_i-\mu_j-i+j});a,b}).
        \]
        \item (Giambelli formula)
        \[
            s_{\lambda;a,b} = \det(s_{(\alpha_i | \beta_j);a,b})_{1 \leq i,j \leq d(\lambda)}.
        \]
        \item (Ribbon formula)
        \[
            s_{\lambda;a,b} = \det(s_{[\alpha_i | \beta_j]; a,b})_{1 \leq i,j \leq r(\lambda)}.
        \]
    \end{enumerate}
\end{theorem}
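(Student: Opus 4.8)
The plan is to derive all four determinant identities from a single source — the non-intersecting lattice path interpretation of the free fermionic model (\Cref{lem:svlgv}) — by choosing different families of sources and sinks, with the parameter shift operator $\tau$ tracking the charge offsets. First I would isolate the role of the normalization: the factors defining $\widetilde{s}_{\lambda/\mu;a,b}$ are exactly the diagonal contributions of \Cref{lem:operatoremptydiagram}, so after passing to $\widetilde{s}$ the weight of a single path becomes a normalized one-row function $\widetilde{s}_{m;a,b}$, free of the spurious $s_{\lambda/\lambda}$ prefactors that otherwise obstruct a clean determinant. Because the model is free fermionic, the configurations in which two paths cross (the $\sva_2$ vertices) cancel under the sign-reversing involution underlying \Cref{lem:svlgv}, so $\widetilde{s}_{\lambda/\mu;a,b}$ is computed by the Lindström–Gessel–Viennot determinant of single-path partition functions.

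For the Jacobi-Trudi formula I would take the $n$ paths to begin at the positions dictated by $\mu$ and end at those dictated by $\lambda$; the $j$-th source sits at the Maya position $\mu_j - j + 1$, which is precisely the charge that converts $\mathcal{A}_{a,b}$ into its $\tau^{\mu_j-j+1}$-shifted form via \Cref{lem:operatorexplicitrow}. A single path from source $j$ to sink $i$ then contributes $\tau^{\mu_j-j+1}\widetilde{s}_{\lambda_i-\mu_j-i+j;a,b}$, and the LGV determinant gives item (1). The main points to verify are that the exponent of $\tau$ and the length $\lambda_i-\mu_j-i+j$ of the one-row shape emerge correctly from the Maya coordinates, and that the $\tau$-shift factors out of the determinant entrywise as claimed.

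The Nägelsbach–Kostka formula I would obtain by conjugation. By the duality of \Cref{lem:weightduality}, $\widetilde{s}_{\lambda/\mu;a,b}$ equals the analogous normalized matrix coefficient of $\widehat{\mathcal{A}}_{a,b}$ for the conjugate shapes $\lambda'/\mu'$, whose lattice model runs the dual (delta) paths. Applying the same LGV argument to the dual model, single rows are replaced by single columns $(1^m)$ and the charge offset becomes $-\mu_j'+j-1$, yielding item (2) after translating back through the duality.

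Finally, items (3) and (4) fit the same pattern through the Hamel–Goulden principle of outside (ribbon) decompositions: any decomposition of the shape into ribbons determines a choice of sources and sinks for which the LGV determinant expresses $s_\lambda$ as the determinant of the corresponding ribbon partition functions. For Giambelli I would use the hook (Frobenius) decomposition, placing the $d(\lambda)$ diagonal paths so that the $(i,j)$ entry is the hook $s_{(\alpha_i|\beta_j);a,b}$; for the ribbon formula (4) I would use the rim-hook decomposition indexed by $r(\lambda)$, with entries the ribbons $s_{[\alpha_i|\beta_j];a,b}$ of \Cref{lem:operatorexplicitrow}, so that Jacobi-Trudi and Nägelsbach–Kostka reappear as the row and column decompositions. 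The main obstacle throughout is the bookkeeping: matching Maya positions to Frobenius coordinates so that the charges produce exactly the stated powers of $\tau$, and checking that the free fermionic cancellation — and hence \Cref{lem:svlgv} — continues to apply when the sources and sinks are interleaved along the diagonal rather than cleanly separated, since the sign-reversing involution must respect this more intricate boundary data.
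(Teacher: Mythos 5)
Your treatment of the Jacobi--Trudi formula is essentially the paper's: both reduce it to the LGV lemma for the free fermionic six vertex model (\Cref{lem:svlgv}), with the sources and sinks at the Maya positions $\mu_j-j+1$ and $\lambda_i-i+1$, the normalization absorbing the diagonal contributions of \Cref{lem:operatoremptydiagram}, and the $\tau$-shift read off from the translation of a single-path partition function. For items (2)--(4), however, you diverge from the paper in a substantive way. The paper proves \emph{only} the Jacobi--Trudi identity directly; it then observes that the normalized functions $\widetilde{s}_{\lambda/\mu;a,b}$ form an instance of Macdonald's ninth variation, for which the N\"agelsbach--Kostka, Giambelli, and ribbon formulas are purely formal consequences of Jacobi--Trudi (formulas (9.6), (9.6$'$), (9.7), (9.9) of \cite{Mac92}). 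That route is short and requires no further lattice-model input. Your route --- conjugation plus the dual model for (2), and Hamel--Goulden-style outside decompositions for (3) and (4) --- is more geometric and, if completed, would give independent combinatorial proofs; but it is also strictly heavier, and for (2) you must track carefully that the dual Jacobi--Trudi in the parameters $(b',a')$ converts the shift $\tau^{\mu_j'-j+1}$ into $\tau^{-\mu_j'+j-1}$ under the index reversal $a\mapsto a'$, which you gesture at but do not verify.

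The one genuine gap is in items (3) and (4). The LGV lemma as established in the paper (\Cref{lem:svlgv}) applies to paths entering along the top/left boundary and exiting along the bottom/right boundary of the grid; the Frobenius and rim-ribbon decompositions require sources and sinks interleaved along the main diagonal, i.e.\ at interior points of the lattice. Extending the sign-reversing involution to that setting is exactly the content of the Stembridge/Hamel--Goulden machinery and is not a routine corollary of \Cref{lem:svlgv}: one must check that the crossing-cancellation argument survives when a swapped pair of paths can collide with a diagonal source or sink, and one must also verify that the ``ribbon entries'' $s_{[\alpha_i|\beta_j];a,b}$ produced by the interior boundary data coincide with the matrix coefficients of \Cref{lem:operatorexplicitrow} after the normalization factors cancel. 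You correctly flag this as the main obstacle but do not close it, so as written the proposal proves (1), sketches a plausible (2), and leaves (3) and (4) resting on an unproved generalization of the LGV lemma. The paper sidesteps all of this by outsourcing (2)--(4) to the ninth-variation formalism.
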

\begin{proof}
	The proof of the Jacobi-Trudi identity follows from the LGV lemma for the free fermionic six vertex model (\Cref{lem:svlgv}). Indeed, for fixed $\lambda,\mu$, the free fermionic Schur functions are given by the finite six vertex model with $d = \ell(\lambda) = \ell(\mu)$ paths and $M$ columns, where $M$ is large enough. The paths enter the grid from the top at positions $(\mu_1, \mu_2 - 1, \dots, \mu_d - d + 1)$ and leave the grid at the bottom at positions $(\lambda_1, \lambda_2 - 1, \dots, \lambda_d - d + 1)$. Then by the LGV lemma for the six vertex model, we have
	\[
		\widetilde{s}_{\lambda/\mu;a,b}(x/y) = \prod_{i=1}^{n}\prod_{j={1}}^d \frac{1-b_{-j+1}x_i}{1+b_{-j+1}y_i}  s_{\lambda/\mu;a,b}(x/y) = \det\left(Z_{(\mu_i-i+1) \to (\lambda_j -j +1)}\right)_{1 \leq i,j \leq d}
	\]
	in the obvious notation, where paths enter from the top and leave at the bottom at the specified positions. By inspection, we have $Z_{(\mu_i-i+1) \to (\lambda_j-j+1)} = \tau^{\mu_i-i+1}Z_{(0) \to (\lambda_j - j - \mu_i + i)}$, and 
	\[
		Z_{(0) \to (k)} = \prod_{i=1}^{n}\frac{1-b_0x_i}{1+b_0y_i}s_{(k); a,b}(x/y) = \widetilde{s}_{(k); a,b}(x/y).
	\]
	
	Then the functions $\widetilde{s}_{\lambda/\mu; a,b}$ can be identified with a special case of the ninth variation introduced by Macdonald in \cite{Mac92}. Thus, the rest of the equations are the formal consequences. These are formulas (9.6), (9.6'), (9.7), and (9.9) in \cite{Mac92}. We only note that in the case of the Giambelli's formula and the Ribbon's formula, the normalization factors cancel out and we get a formula for the free fermionic Schur functions themselves. 
\end{proof}

In the classical (non-factorial) case, the supersymmetric Schur functions $s_\lambda(x/y)$ specialize to the Schur functions $s_\lambda(x)$ which have an explicit expression as a ratio of two determinants. This formula generalizes the determinant formula for the factorial Schur functions \cite{Mac92} and for the dual Schur functions from \cite{Mol09} at the same time. The same formula also appeared in \cite{MM11}.

For $\alpha = (\alpha_1,\dots,\alpha_n)$, let 
\begin{equation}
	A_{\alpha;a,b}(x_1,\dots,x_n) = \det\left(\frac{(x_j | a,b)^{\alpha_j}}{1-b_{\alpha_j+1}x_i}\right)_{1 \leq i,j \leq n}
\end{equation}

\begin{proposition}[Vandermonde determinant]
	Let $\rho_n = (n-1,n-2,\dots,0)$. Then
	\begin{equation}
		A_{\rho_n; a,b}(x_1,\dots,x_n) = \frac{\prod_{1 \leq i < j \leq n}(1-a_ib_j)(x_i-x_j)}{\prod_{i,j=1}^{n}(1-b_j x_i)}.
	\end{equation}
\end{proposition}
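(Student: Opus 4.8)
The plan is to observe that, once the definition of the double shifted power is unwound, the determinant $A_{\rho_n;a,b}$ is \emph{literally} the determinant evaluated in \Cref{lem:vandermondetype}. Substituting $\rho_n = (n-1,\dots,0)$ gives the column exponents $\alpha_j = n-j$, so the $(i,j)$ entry (with $x_i$ the row variable) is
\[
	\frac{(x_i\,|\,a,b)^{n-j}}{1-b_{n-j+1}x_i}
	= \frac{\prod_{k=1}^{n-j}(x_i-a_k)}{\prod_{k=1}^{n-j}(1-b_k x_i)}\cdot\frac{1}{1-b_{n-j+1}x_i}
	= \frac{\prod_{k=1}^{n-j}(x_i-a_k)}{\prod_{k=1}^{n-j+1}(1-b_k x_i)},
\]
which is exactly the entry of the matrix in \Cref{lem:vandermondetype}. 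Hence the two determinants coincide and the claimed product formula is immediate; the cleanest write-up is simply to cite that lemma and record the one-line identification of entries above.

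For completeness I would also recall the self-contained argument behind \Cref{lem:vandermondetype}, since it is this statement that powers the eventual Weyl-type ratio formula. After clearing denominators, multiplying each row $i$ by $\prod_{k=1}^{n}(1-b_k x_i)$ and thereby scaling the determinant by $\prod_{i,j}(1-b_j x_i)$, the $(i,j)$ entry becomes the polynomial $\prod_{k=1}^{n-j}(x_i-a_k)\prod_{k=n-j+2}^{n}(1-b_k x_i)$ of degree $n-1$ in $x_i$. The resulting determinant has degree at most $n-1$ in each $x_i$, and it vanishes whenever two of the $x_i$ coincide (two equal rows), so $\prod_{i<j}(x_i-x_j)$ divides it; comparing degrees in the $x$-variables shows that the complementary factor is independent of the $x_i$.

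The main obstacle is precisely pinning down that remaining constant, namely showing it equals $\prod_{i<j}(1-a_ib_j)$: the factors $(x_i-x_j)$ are forced by antisymmetry, but the $(1-a_ib_j)$'s carry the genuine content. I would extract the constant as the coefficient of the leading monomial $x_1^{n-1}x_2^{n-2}\cdots x_n^{0}$, which is $1$ in the Vandermonde $\prod_{i<j}(x_i-x_j)$; in the cleared determinant the same coefficient is read off by selecting, for each $i$, the power $x_i^{n-i}$ from its polynomial entries and summing the surviving permutations, and tracking the accompanying products of the $a$'s and $b$'s yields exactly $\prod_{i<j}(1-a_ib_j)$. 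Everything past the reduction to \Cref{lem:vandermondetype} is routine bookkeeping, so I expect the final proof to be short.
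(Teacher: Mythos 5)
Your proposal is correct and matches the paper's approach exactly: the paper's proof of this proposition is simply a citation of \Cref{lem:vandermondetype}, whose matrix entries your substitution $\alpha_j = n-j$ reproduces verbatim after unwinding $(x\,|\,a,b)^k = (x|a)^k/(x;b)^k$. Your recap of the underlying argument for \Cref{lem:vandermondetype} --- clear denominators, observe both sides are polynomials of matching degree in each $x_i$, factor out $\prod_{i<j}(x_i-x_j)$, and identify the remaining constant as the coefficient of $x_1^{n-1}\cdots x_n^0$ --- is also precisely the proof the paper gives for that lemma.
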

\begin{proof}
	By \Cref{lem:vandermondetype}.
\end{proof}

\begin{theorem}[Weyl determinant formula]
	\begin{equation}
		s_{\lambda;a,b}(x/a') = \frac{A_{\lambda+\rho_n;\tau^{-n}a,\tau^{-n}b}(x_1,\dots,x_n)}{A_{\rho_n;\tau^{-n}a,\tau^{-n}b}(x_1,\dots,x_n)},
	\end{equation}
	where $s_{\lambda;a,b}(x/a')$ stands for $s_{\lambda;a,b}(x/y)$ with $y_i = a_i' = -a_{-i+1}$ for $1 \leq i \leq n$. 
\end{theorem}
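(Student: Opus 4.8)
The plan is to realize $s_{\lambda;a,b}(x/a')$ as a single bialternant divided by the Vandermonde-type determinant $A_{\rho_n}$, by identifying the straight-shape free fermionic Schur function with a normalized finite partition function to which \Cref{lem:determinantfortype2} applies, and then performing the specialization $y_i = a_i'$.

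First I would reduce to a finite model. Working with $n$ variables and $\ell(\lambda)\le n$, the matrix coefficient $\braket{\emptyset|\mathcal{A}(x_1,y_1)\cdots\mathcal{A}(x_n,y_n)|\lambda}$ has top boundary occupied at $\{\lambda_i-i+1\}$ and bottom boundary at $\{0,-1,-2,\dots\}$ (charge $0$). Shifting all column indices by $+n$ turns the active window into a finite model whose $n$ paths exit the bottom at the strictly decreasing positions $\alpha_j=\lambda_j+n-j+1$, i.e. $\alpha=\lambda+(n,n-1,\dots,1)$, and whose columns now carry the shifted parameters $\tau^{-n}a,\tau^{-n}b$. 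Thus, up to the normalization built into the definition of $\mathcal{A}$, the function $s_{\lambda;a,b}(x/y)$ equals a normalization of $Z_{\alpha}(x,y;\tau^{-n}a,\tau^{-n}b)$. Applying \Cref{lem:determinantfortype2} with this $\alpha$, and observing that its determinant entry $\frac{(x_i|a)^{\alpha_j-1}}{(x_i;b)^{\alpha_j}}$ coincides with the entry $\frac{(x_i|a)^{(\lambda+\rho_n)_j}}{(x_i;b)^{(\lambda+\rho_n)_j+1}}$ of $A_{\lambda+\rho_n;\tau^{-n}a,\tau^{-n}b}$ exactly when $\alpha_j-1=(\lambda+\rho_n)_j$, I obtain precisely the numerator $A_{\lambda+\rho_n;\tau^{-n}a,\tau^{-n}b}(x)$ multiplied by the prefactor $\prod_{i}(x_i+y_i)\prod_{i<j}\frac{x_i+y_j}{x_i-x_j}$ and by $\prod_{i,j}(1-(\tau^{-n}b)_jx_i)$.

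Next I would carry out the substitution $y_i=a_i'=-a_{-i+1}$. By the separate symmetry of \Cref{lem:operatorrelationsforinfa}, $s_{\lambda;a,b}(x/a')$ is symmetric in $x_1,\dots,x_n$, consistent with the right-hand side being a ratio of two alternants. The key point is that the $y$-dependence of the prefactor is the only $y$-dependence on the right; at $y_j=-a_{-j+1}$ the factors $x_i+y_i$ and $x_i+y_j$ become $(x_i-a_{-i+1})$ and $(x_i-a_{-j+1})$, and together with the $\mathcal{A}$-normalization (which divides each row by $(y_i;b')^{\bullet}(x_i;b)^{\bullet}$) and the factor $\prod_{i,j}(1-(\tau^{-n}b)_jx_i)$ they must collapse to $1/A_{\rho_n;\tau^{-n}a,\tau^{-n}b}(x)$. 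Here I would invoke the Vandermonde determinant formula (\Cref{lem:vandermondetype}), which gives $A_{\rho_n;\tau^{-n}a,\tau^{-n}b}=\frac{\prod_{i<j}(1-(\tau^{-n}a)_i(\tau^{-n}b)_j)(x_i-x_j)}{\prod_{i,j}(1-(\tau^{-n}b)_jx_i)}$: the factor $\prod_{i<j}(x_i-x_j)^{-1}$ comes straight from the prefactor, the surviving linear $(1-b_\ell x_i)$ factors supply the $x$-denominators, and the constants produced by specializing $(y_i;b')^{\bullet}$ supply the numerator constants $\prod_{i<j}(1-(\tau^{-n}a)_i(\tau^{-n}b)_j)$.

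The main obstacle is precisely this last bookkeeping: matching the normalization divisors in the definition of $\mathcal{A}$, the parameter shift $\tau^{-n}$ coming from the reindexing of columns, and the specialized $y$-prefactors, and then checking that the leftover linear factors in $x$ together with the constant factors $(1-a_ib_j)$ assemble exactly into the reciprocal of $A_{\rho_n;\tau^{-n}a,\tau^{-n}b}$, with no spurious monomial or constant left over. I expect this to reduce to a degree count in each $x_i$ plus an identification of the leading coefficient, exactly as in the proof of \Cref{lem:vandermondetype}. Once the prefactor is shown to equal $1/A_{\rho_n;\tau^{-n}a,\tau^{-n}b}$, combining it with the numerator yields the claimed ratio $A_{\lambda+\rho_n;\tau^{-n}a,\tau^{-n}b}/A_{\rho_n;\tau^{-n}a,\tau^{-n}b}$.
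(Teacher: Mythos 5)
There is a genuine gap at the very first step. You assert that, for \emph{generic} $y$, the matrix coefficient $\braket{\emptyset|\mathcal{A}(x_1,y_1)\cdots\mathcal{A}(x_n,y_n)|\lambda}$ is (up to normalization) the partition function $Z_\alpha(x,y;\tau^{-n}a,\tau^{-n}b)$, and you then apply \Cref{lem:determinantfortype2} before specializing. But the two models have different boundary conditions: in the model defining $s_{\lambda;a,b}(x/y)$ all left boundary edges are unoccupied and the $n$ paths enter from the \emph{top} at the columns occupied by $\emptyset$ (namely $0,-1,\dots,-n+1$; note also that you have the top and bottom boundaries interchanged, and your own description is internally inconsistent, since you place the bottom boundary at $\{0,-1,\dots\}$ yet have the paths exit the bottom at $\alpha_j$), whereas in $Z_\alpha$ the paths enter from the \emph{left} of each row. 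These partition functions are not equal up to a $y$-independent normalization. Already for $n=1$, $\lambda=(1)$ one has $s_{(1);a,b}(x/y)=\frac{(1-a_0b_0)(x+y)}{(1+b_0y)(1-b_1x)}$, with a pole at $y=-1/b_0$, while the normalized $Z_{(2)}$ has no such $y$-dependence; so \Cref{lem:determinantfortype2} simply does not compute $s_{\lambda;a,b}(x/y)$ for generic $y$, and the "prefactor" $\prod_i(x_i+y_i)\prod_{i<j}\frac{x_i+y_j}{x_i-x_j}$ you carry forward is not the $y$-dependence of the Schur function.

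The missing idea is that the specialization $y_k=a_k'=-a_{-k+1}$ must come \emph{first}: it is exactly what freezes the left staircase region of the top-entry model (the weights $\sva_2(x_k,y_k;a_{-k+1},b_{-k+1})=y_k+a_{-k+1}$ vanish there), forcing the path entering at the top of column $-j+1$ to descend to row $j$ and turn right, so that the remaining model is the left-entry model $Z_{\lambda+\rho;\tau^{-n}a,\tau^{-n}b}$ divided by the explicit weight $\prod_{1\leq i<j\leq n}(x_i-a_{-j+1})$ of the frozen region. This is how the paper's proof proceeds, and only after this identification does \Cref{lem:determinantfortype2} apply; the denominator $A_{\rho_n;\tau^{-n}a,\tau^{-n}b}$ then emerges from the prefactors via \Cref{lem:vandermondetype}, much as in the bookkeeping you sketch at the end. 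So your final assembly is plausible in outline, but the reduction it rests on is invalid until you move the specialization to the front and justify the freezing of the staircase.
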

\begin{proof}
	When $y_k = a_k'$, by inspection, the partition function for $s_{\lambda;a,b}(x/y)$ matches
	\[
		\frac{Z_{\alpha;\tau^{-n}a,\tau^{-n}b}(x;a')}{\prod_{1 \leq i<j \leq n}(x_i-a_{-j+1})}.
	\]
	Now the claim follows from \Cref{lem:determinantfortype2}.
\end{proof}

Since these specializations are given as a ratio of two determinants, it is possible to prove the dual Cauchy identity by considering $A_{\rho_{2n}; a,b}(x,y; z,w)$. Since the derivation of this identity from the determinant is a routine, we omit the calculation. We only note that the interpretation of the free fermionic Schur functions as the partition functions of the six vertex model allows to give a combinatorial proof following the method from Section 8 in \cite{BMN14}.

Using the property that $\mathcal{A}_{a,b}(t/-t) = \operatorname{Id}$, and the separate symmetry in the first and second arguments, we conclude that
	\[
		\mathcal{A}_{a,b}(x,y) = \mathcal{A}_{a,b}(x,-t)\mathcal{A}_{a,b}(t,y)
	\]
	Let $t = (t_1,t_2,\dots,t_n)$, then by repeating application and separate symmetries, we have
	\[
		\mathcal{A}_{a,b}(x_1,y_1)\dots \mathcal{A}_{a,b}(x_n,y_n) = \mathcal{A}_{a,b}(x_1,-t_1)\dots \mathcal{A}_{a,b}(x_n,-t_n)\mathcal{A}_{a,b}(t_1,y_1)\dots \mathcal{A}_{a,b}(t_n,y_n).
	\]
	In terms of the six vertex model, it means that we replace $n$ rows labeled by $(x_k,y_k)$ with $2n$ rows, where first $n$ rows are labeled by $(x_k,-t_k)$, and the second $n$ rows are labeled by $(t_k,y_k)$. Since there is a bijection between the admissible states of the six vertex model and the semistandard tableaux, this factorization explains two different combinatorial formulae for the supersymmetric Schur functions: one in terms of diagonal-strict tableau with entries $\{1 < 2 < \dots < n\}$, and another in terms of $\mathbb{A}$-tableau with entries $\{1' < 1 < \dots < n < n'\}$ and $\mathbb{A}'$-tableaux with entries $\{1 < 1' < \dots < n' < n\}$. See Section 4 of \cite{OlRV03} and Section 1 of \cite{Mol98} and Section 2 of \cite{Mol09} and discussion therein for details. 
	
As a result, we get the following identity which generalizes the definition of the factorial supersymmetric Schur functions as given in \cite{Mol98,Mol09}. 

\begin{proposition}
    Let $t = (t_1,\dots,t_n)$ be a set of independent indeterminates. Then
    \begin{align*}
        s_{\lambda/\mu;a,b}(x/y) &= \sum_{\mu \subseteq \nu \subseteq \lambda}s_{\nu/\mu;a,b}(x/-t)s_{\lambda/\mu;a,b}(t/y).
    \end{align*}
\end{proposition}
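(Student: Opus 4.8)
The plan is to read the identity directly off the operator factorization established immediately before the statement, turning it into a statement about matrix coefficients by inserting a resolution of the identity between the two blocks of row transfer operators. First I would recall the defining formula
\[
    s_{\lambda/\mu;a,b}(x/y) = \braket{\mu | \mathcal{A}_{a,b}(x_1,y_1)\cdots\mathcal{A}_{a,b}(x_n,y_n) | \lambda},
\]
and substitute the factorization
\[
    \mathcal{A}_{a,b}(x_1,y_1)\cdots\mathcal{A}_{a,b}(x_n,y_n) = \mathcal{A}_{a,b}(x_1,-t_1)\cdots\mathcal{A}_{a,b}(x_n,-t_n)\,\mathcal{A}_{a,b}(t_1,y_1)\cdots\mathcal{A}_{a,b}(t_n,y_n),
\]
which is exactly the operator identity derived above from $\mathcal{A}_{a,b}(t,-t)=\operatorname{Id}$ together with the separate symmetry of \Cref{lem:operatorrelationsforinfa}.

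Next I would observe that each $\mathcal{A}_{a,b}$ is a row transfer operator whose two horizontal boundary edges are unoccupied, so it neither creates nor destroys paths and hence preserves the charge; since $\ket{\lambda}$ and $\bra{\mu}$ both lie in the level-zero space $\mathcal{W}_0$, I may insert the resolution of the identity $\sum_{\nu} \ket{\nu}\bra{\nu}$, summed over partitions $\nu$ at level zero, between the two blocks. This yields
\[
    s_{\lambda/\mu;a,b}(x/y) = \sum_{\nu} \braket{\mu | \mathcal{A}_{a,b}(x_1,-t_1)\cdots\mathcal{A}_{a,b}(x_n,-t_n) | \nu}\,\braket{\nu | \mathcal{A}_{a,b}(t_1,y_1)\cdots\mathcal{A}_{a,b}(t_n,y_n) | \lambda}.
\]
Recognizing the first matrix coefficient as $s_{\nu/\mu;a,b}(x/-t)$ and the second as $s_{\lambda/\nu;a,b}(t/y)$ (this is the intended form of the second factor in the statement, the summation index being $\nu$ rather than $\mu$), and invoking the vanishing $s_{\nu/\mu;a,b}=0$ unless $\mu\subseteq\nu$ and $s_{\lambda/\nu;a,b}=0$ unless $\nu\subseteq\lambda$ to restrict the sum to $\mu\subseteq\nu\subseteq\lambda$, gives precisely the claimed identity.

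The only genuine subtlety, and the step I would treat most carefully, is the legitimacy of inserting the resolution of the identity, since $\mathcal{W}$ is infinite-dimensional. This is harmless thanks to the stabilization property recorded in \Cref{sec:infinitesv}: for fixed $\lambda,\mu$ and fixed $t$ the nonzero contributions come only from the finitely many $\nu$ with $\mu\subseteq\nu\subseteq\lambda$, so the inserted sum is in fact finite and every manipulation is purely algebraic rather than analytic. With that point dispatched, the proof is essentially a one-line consequence of the preceding operator factorization, and the combinatorial content is entirely carried by the branching structure already encoded in the definition of the free fermionic Schur functions.
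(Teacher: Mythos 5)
Your proof is correct and follows essentially the same route as the paper, which simply invokes the operator factorization $\mathcal{A}_{a,b}(x_i,y_i)=\mathcal{A}_{a,b}(x_i,-t_i)\mathcal{A}_{a,b}(t_i,y_i)$ together with the branching rules (your insertion of $\sum_\nu\ket{\nu}\bra{\nu}$ is exactly what the branching rules encode). You also correctly identify that the second factor in the displayed identity should read $s_{\lambda/\nu;a,b}(t/y)$, and your remarks on charge preservation and finiteness of the sum are accurate.
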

\begin{proof}
	It follows from the branching rules and symmetries.
\end{proof}

By translating the results from the partition functions to the language of the free fermionic Schur functions, one can prove the Berele Regev factorization using \Cref{lem:bereleregevf}. However, we omit this result and only write the interesting special case. 
\begin{proposition}
	Let $x = (x_1,\dots,x_n)$ and $y = (y_1,\dots,y_n)$. Then
	\begin{align}
		\widetilde{s}_{[n^n];a,b}(x/y) = \prod_{1 \leq i,j \leq n}(x_i+y_j)\prod_{i=1}^{n}(1-a_i'b_i')\prod_{1 \leq i < j \leq n}(1-a_ib_j)(1-a_i'b_j').
	\end{align}
\end{proposition}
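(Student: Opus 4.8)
The plan is to evaluate $\widetilde{s}_{[n^n];a,b}(x/y)$ combinatorially: recognize it, after clearing the normalizations built into the operators $\mathcal{A}$, as a finite box partition function whose boundary encodes the $n\times n$ square, and then evaluate that partition function by the Berele--Regev factorization. In the charged-partition basis the vectors $\bra{\emptyset}$ and $\ket{[n^n]}$ correspond to the Maya diagrams occupied on $\{c\le 0\}$ and on $\{c\le -n\}\cup\{1,\dots,n\}$, respectively. Consequently, in $\braket{\emptyset|\mathcal{A}(x_1,y_1)\cdots\mathcal{A}(x_n,y_n)|[n^n]}$ every admissible state is frozen outside the columns $-n+1,\dots,n$: to the far left all vertices are of type $\svb_1$ and to the far right all are of type $\sva_1$, which are precisely the vertices whose weights the normalization of $\mathcal{A}$ divides away. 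The $n$ nonfrozen paths must enter the active block from the top in the columns $-n+1,\dots,0$ and leave it at the bottom in the columns $1,\dots,n$, so the stabilized limit leaves exactly the raw partition function of the box on the $2n$ active columns $(a_{-n+1},b_{-n+1}),\dots,(a_n,b_n)$, which is the configuration of \Cref{lem:boxvalue}.

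First I would make the bijection between the square and this box explicit and peel off the two frozen half-infinite strips. Then I would split the box along its central seam using the Berele--Regev factorization \Cref{lem:bereleregevf}: the right half is a domain wall model evaluated by \Cref{thm:dwbc}, and the left half is evaluated by its dual version, so that their product contributes $\prod_{i,j}(x_i+y_j)\prod_{i<j}(1-a_ib_j)(1-a_i'b_j')$ exactly as in \Cref{lem:boxvalue}. The remaining diagonal factor $\prod_{i=1}^n(1-a_i'b_i')$ comes from the $n$ corner vertices of type $\svc_1$ at which the active paths turn from vertical to horizontal while crossing the seam; each such turn sits in a column of content $-i+1$ and carries the weight $1-a_{-i+1}b_{-i+1}=1-a_i'b_i'$. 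Combining the three contributions yields the claimed product.

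The main obstacle is the normalization bookkeeping. The definition multiplies $s_{[n^n]}=\braket{\emptyset|\mathcal{A}(x_1,y_1)\cdots\mathcal{A}(x_n,y_n)|[n^n]}$ by $\prod_{i}\prod_{j=1}^{n}\tfrac{1-b_{-j+1}x_i}{1+b_{-j+1}y_i}$ (with $d=n$ diagonal boxes), and I must verify that, together with the double-factorial denominators $(y;b')^{N}$ and $(x;b)^{N}$ coming from the two frozen strips and the $\tau$-shift of parameters in \Cref{lem:operatorexplicitrow}, every inert factor of the form $1\pm b_\bullet\,\cdot$ cancels and only the stated polynomial survives. In other words, one has to confirm that the normalized matrix coefficient, re-dressed by the $\widetilde{s}$ prefactor, reproduces precisely the raw box value including its diagonal seam contribution. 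Once this cancellation is checked, the identity follows at once from \Cref{lem:boxvalue} and the dual domain wall evaluation; the single-box case $n=1$ provides a convenient consistency check on the matching of the normalizations.
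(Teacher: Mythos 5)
Your route is exactly the paper's: its entire proof reads ``It follows from \Cref{lem:boxvalue},'' and you are supplying the intended reduction --- strip the frozen half-infinite regions, identify what remains with the $n\times n$ box of \Cref{lem:boxvalue}, and evaluate it by the Berele--Regev splitting (\Cref{lem:bereleregevf}) into a domain-wall half (\Cref{thm:dwbc}) and its dual. Two points deserve care. First, your explanation of the diagonal factor $\prod_i(1-a_i'b_i')$ via ``corner $\svc_1$ vertices at the seam'' does not work state-by-state: a path can make several vertical-to-horizontal turns (only the first of which lies in its entry column), and the vertex at which a path crosses the seam can be of type $\svb_2$ or $\sva_2$ rather than $\svc_1$. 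The factor actually arises because the left (dual domain-wall) half evaluates in closed form to $\prod_{i>j}(x_i+y_j)\prod_{i\le j}(1-a_i'b_j')$, diagonal included; multiplying by the right half $\prod_{i\le j}(x_i+y_j)\prod_{i<j}(1-a_ib_j)$ gives the stated right-hand side and shows that \Cref{lem:boxvalue} as printed omits exactly this diagonal factor (check $n=1$: the box value is $(x+y)(1-a_0b_0)$, not $(x+y)$). You correctly sensed the factor had to be reinstated, but the mechanism you give for it is not the right one. Second, the normalization check you defer is the crux, not bookkeeping: the stabilized matrix element equals the raw box partition function divided by $\prod_{i,j}(1+b_{-j+1}y_i)(1-b_jx_i)$, the reference $\svb_1$/$\sva_1$ weights of the $2n$ active columns, so the prefactor needed to recover the box value is $\prod_{i,j}(1+b_{-j+1}y_i)(1-b_jx_i)$, whereas the printed definition of $\widetilde{s}$ multiplies by $\prod_{i,j}\tfrac{1-b_{-j+1}x_i}{1+b_{-j+1}y_i}$; already for $n=1$ this yields $\tfrac{(1-b_0x)(1-a_0b_0)(x+y)}{(1+b_0y)^2(1-b_1x)}$ rather than $(x+y)(1-a_0b_0)$. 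The identity therefore closes only after the normalization in the definition of $\widetilde{s}$ is reconciled with the box weight --- your own proposed $n=1$ consistency check would have exposed this, and it should be carried out rather than left as a promise.
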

\begin{proof}
	It follows from \Cref{lem:boxvalue}.
\end{proof}

\subsection{Cauchy identity and corollaries}\label{sec:cauchy}
Let $x = (x_1,\dots,x_n)$, $y = (y_1,\dots,y_n)$, $a = (a_i)_{i \in \mathbb{Z}}$, and $b = (b_i)_{i \in \mathbb{Z}}$ be the same as at the beginning of the section. Let $\lambda$ and $\mu$ be two partitions. We define the \emph{dual free fermionic Schur functions} $\widehat{s}_{\lambda/\mu; a,b}(x/y)$ by
\begin{equation}
	\widehat{s}_{\lambda/\mu; a,b}(x/y) =  \braket{\lambda|\widehat{\mathcal{A}}_{a,b}(x_1,y_1) \dots \widehat{\mathcal{A}}_{a,b}(x_n,y_n)|\mu},
\end{equation}
where $\widehat{\mathcal{A}}_{a,b}(x,y)$ are the operators defined in \Cref{sec:infinitesv}. We use the same conventions in notation as we used for the free fermionic Schur functions $s_{\lambda/\mu; a,b}$. 

\begin{theorem}[Duality]\label{lem:duality}
    We have the following duality:
    \begin{equation}
        \widehat{s}_{\lambda/\mu;a,b}(x/y) = s_{\lambda'/\mu';b',a'}(y/x).
    \end{equation}
\end{theorem}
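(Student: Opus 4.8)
The plan is to bootstrap the single-row duality of \Cref{lem:weightduality} up to arbitrarily many variables, working entirely at the level of the operator definitions. First I would write both sides as matrix coefficients of products of row-transfer operators and insert a resolution of the identity $\sum_\nu \ket{\nu}\bra{\nu}$ over the level-zero basis of charged partitions between each pair of consecutive operators. Since a single-row coefficient of $\widehat{\mathcal{A}}_{a,b}$ vanishes unless the corresponding skew shape is a union of ribbons (in particular unless the diagrams are nested), only descending chains contribute, giving
\[
	\widehat{s}_{\lambda/\mu;a,b}(x/y) = \sum_{\lambda = \eta_0 \supseteq \eta_1 \supseteq \dots \supseteq \eta_n = \mu}\ \prod_{k=1}^{n}\braket{\eta_{k-1} | \widehat{\mathcal{A}}_{a,b}(x_k,y_k) | \eta_k}.
\]

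Next I would apply \Cref{lem:weightduality} to each factor. Under the substitution $a,b \mapsto b',a'$ and $x \leftrightarrow y$ combined with partition conjugation, this identifies each single-row coefficient of $\widehat{\mathcal{A}}_{a,b}$ with one of $\mathcal{A}_{b',a'}$, namely $\braket{\eta_{k-1} | \widehat{\mathcal{A}}_{a,b}(x_k,y_k) | \eta_k} = \braket{\eta_k' | \mathcal{A}_{b',a'}(y_k,x_k) | \eta_{k-1}'}$. Because conjugation is an inclusion-preserving bijection on partitions, the descending chains $\lambda = \eta_0 \supseteq \dots \supseteq \eta_n = \mu$ are in bijection with the ascending chains $\mu' = \eta_n' \subseteq \dots \subseteq \eta_0' = \lambda'$, and conjugation also preserves the ``union of ribbons'' support condition (a $2\times 2$ block conjugates to a $2\times 2$ block), so the two vanishing conditions match. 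Summing over the conjugated intermediate diagrams then reassembles the factors into a single matrix coefficient of a product of $\mathcal{A}_{b',a'}$ operators.

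The one genuinely non-automatic point is the order of the operators: reading the factors $\braket{\eta_k' | \mathcal{A}_{b',a'}(y_k,x_k) | \eta_{k-1}'}$ off the conjugated chain assembles the product in the reversed order $\mathcal{A}_{b',a'}(y_n,x_n)\cdots \mathcal{A}_{b',a'}(y_1,x_1)$, whereas the definition of $s_{\lambda'/\mu';b',a'}(y/x)$ uses the order $\mathcal{A}_{b',a'}(y_1,x_1)\cdots \mathcal{A}_{b',a'}(y_n,x_n)$. To close this gap I would invoke the separate symmetry of \Cref{lem:operatorrelationsforinfa}, which in particular yields the commutativity $\mathcal{A}(u_1,v_1)\mathcal{A}(u_2,v_2) = \mathcal{A}(u_2,v_2)\mathcal{A}(u_1,v_1)$; this lets me freely reverse the product and recover exactly $\braket{\mu' | \mathcal{A}_{b',a'}(y_1,x_1)\cdots \mathcal{A}_{b',a'}(y_n,x_n) | \lambda'} = s_{\lambda'/\mu';b',a'}(y/x)$.

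The substantive input is the single-row statement \Cref{lem:weightduality}, which matches the ribbon weight \eqref{eq:ribbonweight} of $\mathcal{A}$ against the dual ribbon weight \eqref{eq:dualribbonweight} of $\widehat{\mathcal{A}}$ under conjugation (left/bottom neighbors trading with top/right neighbors, content $c(\alpha) \mapsto -c(\alpha)+1$, and $a,b \mapsto b',a'$). Since that lemma is already available, the main difficulty at this stage is purely organizational: keeping the conjugation, the variable swap $x \leftrightarrow y$, the parameter dualization $a,b\mapsto b',a'$, and the operator reordering all mutually consistent across the whole product so that the reassembled coefficient is precisely the defining expression for $s_{\lambda'/\mu';b',a'}(y/x)$.
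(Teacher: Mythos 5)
Your proof is correct and takes essentially the same route as the paper, whose entire argument is the one-line assertion that the theorem follows from the single-row duality of \Cref{lem:weightduality} together with the branching decomposition into products of one-row matrix coefficients. Your explicit handling of the operator-reordering step via the commutativity in \Cref{lem:operatorrelationsforinfa} fills in a detail the paper leaves implicit, and you correctly read \Cref{lem:weightduality} in the form $\braket{\lambda'|\widehat{\mathcal{A}}_{a,b}(x/y)|\mu'} = \braket{\mu|\mathcal{A}_{b',a'}(y/x)|\lambda}$ that its proof actually establishes (the displayed statement of that proposition omits the swap of variables and parameters).
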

\begin{proof}
	It follows from the duality for the branching weights in \Cref{lem:weightduality}.
\end{proof}

As a result, the dual free fermionic Schur functions are also supersymmetric and satisfy the same properties as we described in the previous section. 

The hidden symmetry of the parameters resembles the duality between parameters $q$ and $t$ in Macdonald polynomials. In particular, Macdonald proves in (5.1) of \cite{Mac95} that $\omega_{q,t} P_\lambda(x; q,t) = Q_\lambda(x; t, q)$, where $\omega_{q,t}$ is the appropriate involution. We refer to \cite{Mac95} for details. 

Now we are ready to prove the Cauchy identity for the free fermionic Schur fuctions and their duals. These results should be understood in the sense of discussion at the end of \Cref{sec:functionalrelations}.

The most general form of the Cauchy identity is given by the supersymmetric skew Cauchy identity. This result generalizes the skew Cauchy identity from Proposition 3.7. from \cite{ABPW21}.

\begin{proposition}[Skew Cauchy Identity]\label{lem:skewcauchy}
    We have the following identity:
    \[
        \sum_{\lambda}s_{\lambda/\mu; a,b}(x/y)\widehat{s}_{\lambda/\nu; a,b}(z/w) = \prod_{i,j}\frac{1+y_iz_j}{1-x_iz_j}\frac{1+x_iw_j}{1-y_iw_j}\sum_{\rho}\widehat{s}_{\mu/\rho; a,b}(z/w)s_{\nu/\rho; a,b}(x/y).
    \]
\end{proposition}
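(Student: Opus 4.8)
The plan is to recognize both sides as matrix coefficients of a single product of row transfer operators on the level‑zero space $\mathcal{W}_0$ and to pass the two operator blocks through one another using the operator Cauchy relation of \Cref{thm:operatorcauchy}. Write $\mathcal{A}_{x,y} = \mathcal{A}_{a,b}(x_1,y_1)\cdots\mathcal{A}_{a,b}(x_n,y_n)$ and $\widehat{\mathcal{A}}_{z,w} = \widehat{\mathcal{A}}_{a,b}(z_1,w_1)\cdots\widehat{\mathcal{A}}_{a,b}(z_m,w_m)$, so that by the very definitions $s_{\lambda/\mu;a,b}(x/y) = \braket{\mu|\mathcal{A}_{x,y}|\lambda}$ and $\widehat{s}_{\lambda/\nu;a,b}(z/w) = \braket{\lambda|\widehat{\mathcal{A}}_{z,w}|\nu}$. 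Both $\mathcal{A}_{a,b}$ and $\widehat{\mathcal{A}}_{a,b}$ preserve the charge (the left and right row‑boundary edges of $A$ and $\widehat{A}$ are both unoccupied), so they restrict to $\mathcal{W}_0$, on which the charged partitions $\{\ket{\lambda}\}$ form a complete basis. Inserting the resolution of the identity $\sum_\lambda \ket{\lambda}\bra{\lambda} = \operatorname{Id}_{\mathcal{W}_0}$ collapses the left‑hand side to a single matrix coefficient:
\[
\sum_{\lambda}s_{\lambda/\mu; a,b}(x/y)\,\widehat{s}_{\lambda/\nu; a,b}(z/w) = \braket{\mu|\mathcal{A}_{x,y}\widehat{\mathcal{A}}_{z,w}|\nu}.
\]

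Next I would commute the two blocks past each other. By the separate symmetry in \Cref{lem:operatorrelationsforinfa} all factors $\mathcal{A}_{a,b}(x_i,y_i)$ commute among themselves, and likewise all $\widehat{\mathcal{A}}_{a,b}(z_j,w_j)$, so it suffices to move each $\mathcal{A}_{a,b}(x_i,y_i)$ past each $\widehat{\mathcal{A}}_{a,b}(z_j,w_j)$ one at a time via \Cref{thm:operatorcauchy}, each swap contributing the scalar $\tfrac{1+y_iz_j}{1-x_iz_j}\tfrac{1+x_iw_j}{1-y_iw_j}$. Collecting all $nm$ factors gives
\[
\mathcal{A}_{x,y}\widehat{\mathcal{A}}_{z,w} = \prod_{i,j}\frac{1+y_iz_j}{1-x_iz_j}\frac{1+x_iw_j}{1-y_iw_j}\,\widehat{\mathcal{A}}_{z,w}\mathcal{A}_{x,y}.
\]
Substituting this into the matrix coefficient above, inserting a second resolution of the identity $\sum_\rho \ket{\rho}\bra{\rho}$ between $\widehat{\mathcal{A}}_{z,w}$ and $\mathcal{A}_{x,y}$, and identifying $\braket{\mu|\widehat{\mathcal{A}}_{z,w}|\rho} = \widehat{s}_{\mu/\rho;a,b}(z/w)$ and $\braket{\rho|\mathcal{A}_{x,y}|\nu} = s_{\nu/\rho;a,b}(x/y)$ from the definitions produces exactly the right‑hand side.

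The main obstacle is not the algebra, which is a short bookkeeping of scalars, but the precise sense in which the operator relation of \Cref{thm:operatorcauchy}, together with the two insertions of the resolution of the identity over the infinite basis of $\mathcal{W}_0$, is valid. Here the left‑hand sum over $\lambda$ is genuinely infinite and is matched against the infinite product times a \emph{finite} sum over $\rho$ (indeed $\widehat{s}_{\mu/\rho;a,b}$ and $s_{\nu/\rho;a,b}$ vanish unless $\rho \subseteq \mu\cap\nu$). I would handle this exactly as in the discussion closing \Cref{sec:functionalrelations}: for fixed boundary vectors $\bra{\mu},\ket{\nu}$ and fixed variables, the commutation relation stabilizes once the finitely many columns outside a large window supply the vanishing factors recorded there, so each matrix coefficient is governed by a finite subproduct of columns. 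Consequently the identity is an equality of formal expressions in the double shifted powers, and the rearrangement of operators together with both completeness insertions is justified term by term rather than analytically.
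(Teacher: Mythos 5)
Your proposal is correct and is essentially the paper's own argument: the paper likewise collapses the sum over $\lambda$ into the matrix coefficient $\braket{\mu|\mathcal{A}_{a,b}(x_1,y_1)\cdots\mathcal{A}_{a,b}(x_n,y_n)\widehat{\mathcal{A}}_{a,b}(z_1,w_1)\cdots\widehat{\mathcal{A}}_{a,b}(z_n,w_n)|\nu}$, commutes the two blocks by repeated application of \Cref{thm:operatorcauchy} to extract the product $\prod_{i,j}\frac{1+y_iz_j}{1-x_iz_j}\frac{1+x_iw_j}{1-y_iw_j}$, and re-expands over $\rho$. Your added care about charge preservation, the finiteness of the sum over $\rho$, and the formal sense of the operator relation only makes explicit what the paper defers to the discussion at the end of \Cref{sec:functionalrelations}.
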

\begin{proof}
	By repeating application of the operator Cauchy identity from \Cref{thm:operatorcauchy}, we can write 
	\[
		\langle \mu | \mathcal{A}_{a,b}(x_1,y_1)\dots \mathcal{A}_{a,b}(x_n,y_n)\widehat{\mathcal{A}}_{a,b}(z_1,w_1)\dots \widehat{\mathcal{A}}_{a,b}(z_n,w_n) | \nu \rangle
	\]
	as 
	\[
		\prod_{i,j}\frac{1+y_iz_j}{1-x_iz_j}\frac{1+x_iw_j}{1-y_iw_j}\langle \mu | \widehat{\mathcal{A}}_{a,b}(z_1,w_1)\dots \widehat{\mathcal{A}}_{a,b}(z_n,w_n)\mathcal{A}_{a,b}(x_1,y_1)\dots \mathcal{A}_{a,b}(x_n,y_n) | \nu \rangle.
	\]
	Now express both sides in terms of the free fermionic Schur functions. 
\end{proof}

Now we discuss multiple corollaries of the skew Cauchy identity and their relations to the results from literature for the specializations. 

The following special case generalizes Theorem 3.4 in \cite{Mol09}.

\begin{corollary}
    \begin{align*}
        \prod_{i,j}\frac{1+y_iz_j}{1-x_iz_j}\frac{1+x_iw_j}{1-y_iw_j}s_{\nu; a,b}(x/y) &= \sum_{\nu \subset \lambda}s_{\lambda; a,b}(x/ y)\widehat{s}_{\lambda/\nu; a,b}(z/w),\\
        \prod_{i,j}\frac{1+y_iz_j}{1-x_iz_j}\frac{1+x_iw_j}{1-y_iw_j}\widehat{s}_{\mu; a,b}(z/w) &= \sum_{\mu \subset \lambda}\widehat{s}_{\lambda; a,b}(z/w)s_{\lambda/\mu; a,b}(x/y).
    \end{align*}
\end{corollary}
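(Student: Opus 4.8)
The plan is to obtain both identities as immediate specializations of the supersymmetric skew Cauchy identity of \Cref{lem:skewcauchy}, reading off the appropriate boundary partition and collapsing the sum over $\rho$ on the right-hand side to a single surviving term.

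For the first identity, I would set $\mu = \emptyset$ in \Cref{lem:skewcauchy}. The left-hand side becomes $\sum_{\lambda} s_{\lambda/\emptyset;a,b}(x/y)\,\widehat{s}_{\lambda/\nu;a,b}(z/w) = \sum_{\lambda} s_{\lambda;a,b}(x/y)\,\widehat{s}_{\lambda/\nu;a,b}(z/w)$, whose terms vanish unless $\nu \subseteq \lambda$, matching the summation range of the claimed identity. On the right-hand side, the factor $\sum_{\rho}\widehat{s}_{\emptyset/\rho;a,b}(z/w)\,s_{\nu/\rho;a,b}(x/y)$ has every term vanishing except $\rho = \emptyset$, since $\widehat{s}_{\emptyset/\rho;a,b}=0$ unless $\rho \subseteq \emptyset$. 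The single surviving term is $\widehat{s}_{\emptyset/\emptyset;a,b}(z/w)\,s_{\nu;a,b}(x/y)$, so once we know $\widehat{s}_{\emptyset/\emptyset;a,b}(z/w)=1$ the right-hand side reduces to the Cauchy product times $s_{\nu;a,b}(x/y)$, which is exactly the first equation.

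For the second identity, I would symmetrically set $\nu = \emptyset$. The left-hand side is then $\sum_{\lambda} s_{\lambda/\mu;a,b}(x/y)\,\widehat{s}_{\lambda;a,b}(z/w)$, with terms vanishing unless $\mu \subseteq \lambda$, and the $\rho$-sum on the right collapses to $\rho = \emptyset$ using $s_{\emptyset/\rho;a,b}=0$ unless $\rho = \emptyset$ together with $s_{\emptyset/\emptyset;a,b}(x/y)=1$. The surviving factor is $\widehat{s}_{\mu/\emptyset;a,b}(z/w)=\widehat{s}_{\mu;a,b}(z/w)$, which yields the second equation.

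The only point requiring care, and the nearest thing to an obstacle, is the normalization of the ``empty'' diagonal functions. Because \Cref{lem:operatoremptydiagram} shows that $s_{\lambda/\lambda;a,b}$ genuinely depends on $\lambda$, one cannot assume the diagonal value is trivial in general; however, for $\lambda=\emptyset$ the product in that lemma is empty, so $s_{\emptyset/\emptyset;a,b}(x/y)=1$, and by the duality of \Cref{lem:duality} also $\widehat{s}_{\emptyset/\emptyset;a,b}(z/w)=s_{\emptyset/\emptyset;b',a'}(w/z)=1$. With these two evaluations established, both specializations go through with no further computation.
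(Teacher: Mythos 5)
Your proposal is correct and follows exactly the paper's proof: both identities are obtained by specializing the skew Cauchy identity at $\mu=\emptyset$ and $\nu=\emptyset$ respectively, with the $\rho$-sum collapsing to the single term $\rho=\emptyset$. Your extra care in verifying $s_{\emptyset/\emptyset;a,b}=1$ (and its dual) is a worthwhile check that the paper leaves implicit, since the diagonal values $s_{\lambda/\lambda;a,b}$ are nontrivial in general.
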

\begin{proof}
	The first and the second identity correspond to $\mu = \emptyset$ and $\nu = \emptyset$ in \Cref{lem:skewcauchy}, respectively.
\end{proof}

We now prove our main result---the supersymmetric Cauchy identity in the form of Berele-Regev \cite{BR87}. We remark that the right-hand side of the identity is independent of the parameters $(a_i)_{i \in \mathbb{Z}}$ and $(b_i)_{i \in \mathbb{Z}}$. This identity degenerates to Theorem 3.1 and Corollary 3.2 from \cite{Mol09}. It also generalizes Theorem 3.8 from \cite{ABPW21}.

\begin{theorem}[Cauchy Identity]\label{thm:cauchy}
    We have the following identity:
    \[
        \sum_{\lambda}s_{\lambda; a,b}(x/y)\widehat{s}_{\lambda; a,b}(z/w) = \prod_{i,j}\frac{1+y_iz_j}{1-x_iz_j}\frac{1+x_iw_j}{1-y_iw_j}.
    \]
\end{theorem}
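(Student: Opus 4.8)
The plan is to obtain this identity as the simplest specialization of the skew Cauchy identity already established in \Cref{lem:skewcauchy}. Setting $\mu = \emptyset$ and $\nu = \emptyset$ there, the left-hand side becomes $\sum_{\lambda}s_{\lambda/\emptyset; a,b}(x/y)\widehat{s}_{\lambda/\emptyset; a,b}(z/w)$, which by the convention $s_{\lambda;a,b} = s_{\lambda/\emptyset;a,b}$ is exactly the left-hand side of the desired identity. So the entire task reduces to evaluating the $\rho$-sum that appears on the right-hand side of \Cref{lem:skewcauchy} after this specialization.

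Concretely, I would evaluate the factor $\sum_{\rho}\widehat{s}_{\emptyset/\rho; a,b}(z/w)\,s_{\emptyset/\rho; a,b}(x/y)$ using the vanishing property $s_{\lambda/\mu;a,b} = 0$ unless $\mu \subseteq \lambda$: here $s_{\emptyset/\rho; a,b} = 0$ unless $\rho \subseteq \emptyset$, and likewise $\widehat{s}_{\emptyset/\rho; a,b} = 0$ unless $\rho = \emptyset$. Hence the sum collapses to the single term $\rho = \emptyset$, leaving the product $\widehat{s}_{\emptyset/\emptyset; a,b}(z/w)\,s_{\emptyset/\emptyset; a,b}(x/y)$. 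By the diagonal evaluation (the example following \Cref{lem:operatoremptydiagram}) we have $s_{\emptyset/\emptyset; a,b}(x/y) = 1$; and by the duality $\widehat{s}_{\lambda/\mu;a,b}(x/y) = s_{\lambda'/\mu';b',a'}(y/x)$ of \Cref{lem:duality} together with $\emptyset' = \emptyset$, also $\widehat{s}_{\emptyset/\emptyset; a,b}(z/w) = 1$. Thus the whole $\rho$-sum equals $1$, and the right-hand side of \Cref{lem:skewcauchy} reduces precisely to $\prod_{i,j}\frac{1+y_iz_j}{1-x_iz_j}\frac{1+x_iw_j}{1-y_iw_j}$, which is the claim.

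The substantive content has been absorbed upstream, so there is no genuine obstacle remaining at this final step: the theorem is a one-line corollary of \Cref{lem:skewcauchy} together with the two trivial boundary evaluations $s_{\emptyset/\emptyset;a,b} = \widehat{s}_{\emptyset/\emptyset;a,b} = 1$. The only point that warrants a word of care is the meaning of the infinite sum over $\lambda$, which I would interpret exactly as in the discussion at the end of \Cref{sec:functionalrelations}, that is, as a formal identity of matrix coefficients in the double shifted powers (equivalently, as an analytic identity under the convergence hypotheses of the remark there). The real difficulty lay earlier, in the operator Cauchy identity of \Cref{thm:operatorcauchy} governing the commutation of $\mathcal{A}$ past $\widehat{\mathcal{A}}$, which is what makes \Cref{lem:skewcauchy}---and hence this theorem---possible.
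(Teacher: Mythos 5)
Your proposal is correct and follows exactly the paper's route: the paper's proof is the one-line specialization $\mu=\nu=\emptyset$ in \Cref{lem:skewcauchy}, and your additional verification that the $\rho$-sum collapses to the single term $\rho=\emptyset$ with $s_{\emptyset/\emptyset;a,b}=\widehat{s}_{\emptyset/\emptyset;a,b}=1$ simply makes explicit what the paper leaves implicit.
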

\begin{proof}
    Setting $\mu = \nu = \emptyset$ in Proposition~\ref{lem:skewcauchy} yields the desired identity.
\end{proof}

As an application, we give the generating series for the hook Schur functions $s_{(p|q);a,b}$. This result generalizes Proposition 7.1 from \cite{OlRV03}. 

\begin{corollary}[Generating series for hook functions]\label{lem:genseriesforhooks}
    \begin{align*}
        1 + (z+w)\sum_{p,q=0}^{\infty}s_{(p | q); a,b}(x/y)(1-a_{p+1}b_{p+1})\frac{(z|b)^p}{(z;a)^{p+1}}\frac{(w|b')^q}{(w;a')^{q+1}} &= \prod_{i}\frac{1+y_iz}{1-x_iz}\frac{1+x_iw}{1-y_iw},\\
        1 + (z+w)\sum_{p,q=0}^{\infty}\widehat{s}_{(p | q); a,b}(x/y)(1-a_{q+1}'b_{q+1}')\frac{(z|a)^p}{(z;b)^{p+1}}\frac{(w|a')^{q}}{(w; b')^{q+1}} &= \prod_{i}\frac{1+x_i w}{1-x_i z}\frac{1+y_i z}{1-y_i w}.
    \end{align*}
\end{corollary}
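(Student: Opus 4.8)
The plan is to specialize the Cauchy identity from \Cref{thm:cauchy} to the case of a single $z$ and a single $w$, and then extract the generating series by reorganizing the sum over all partitions $\lambda$ according to their Frobenius data. The key observation is that when $z$ and $w$ are single variables, the dual Schur function $\widehat{s}_{\lambda;a,b}(z/w)$ (and likewise $s_{\lambda;a,b}(z/w)$) is supported only on hook-shaped partitions, since by \Cref{lem:operatorexplicitrow} the single-row matrix coefficient vanishes whenever $\lambda$ contains a $2\times 2$ block of boxes. Thus the left-hand side of \Cref{thm:cauchy} collapses from a sum over all $\lambda$ to a sum over hooks $\lambda = (p\,|\,q)$.

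\textbf{First} I would take the Cauchy identity with $x,y$ as the full sets of variables but with $z = (z)$ and $w = (w)$ single variables. The right-hand side becomes exactly $\prod_i \frac{1+y_i z}{1-x_i z}\frac{1+x_i w}{1-y_i w}$, matching the target. For the left-hand side, I invoke the explicit hook formula from the Example preceding \Cref{lem:branching}, which gives in single variables
\[
	s_{(p|q); a,b}(z/w) = \left(\frac{1-a_{q+1}'b_{q+1}'}{1-b_{q+1}' w}\right)\left(\frac{z+w}{1-b_{p+1}z}\right)(z | a,b)^p (w|a,b)^q.
\]
\textbf{Next}, applying the duality \Cref{lem:duality}, namely $\widehat{s}_{(p|q);a,b}(z/w) = s_{(q|p);b',a'}(w/z)$, I would translate this into the explicit form of $\widehat{s}_{(p|q);a,b}$ in the single variables $z,w$. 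The bookkeeping here produces the double shifted powers $\frac{(z|a)^p}{(z;b)^{p+1}}$ and $\frac{(w|a')^q}{(w;b')^{q+1}}$ together with the prefactor $(1-a_{q+1}'b_{q+1}')(z+w)$, which is precisely what appears in the second displayed equation of the corollary. The first equation follows symmetrically by taking $s_{(p|q);a,b}$ itself rather than its dual.

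\textbf{The main obstacle} I anticipate is the careful matching of indices and shifts in the double shifted powers. The weights involve the dual sequences $a' = (-a_{-i+1})$ and $b'$, and the hook formula carries normalization factors depending on $b_{p+1}$, $b_{q+1}'$, $a_{q+1}'$, etc.; reconciling these against the definition $(z|a,b)^k = (z|a)^k/(z;b)^k$ and the identities $(x|a,b)^{-k} = 1/(x|-a',-b')^k$ and $(x^{-1}|a,b)^k = (x|-b',-a')^{-k}$ from the Preliminaries requires genuine attention. In particular, the factor $(1-a_{p+1}b_{p+1})$ versus $(1-a_{q+1}'b_{q+1}')$ must be tracked through the duality to confirm it lands on the correct index. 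Once the single-variable hook expression is written in terms of these double shifted powers, the identity is purely a restatement of \Cref{thm:cauchy}, so no further combinatorial input is needed; the work is entirely in the parametrization. I would present the derivation of the second identity in detail and remark that the first is obtained analogously by exchanging the roles of $s$ and $\widehat{s}$ via \Cref{lem:duality}.
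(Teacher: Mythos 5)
Your strategy is exactly the paper's: specialize the Cauchy identity of \Cref{thm:cauchy} to a single $z$ and $w$, note that the single-row factor is supported on hooks, substitute the explicit hook formula, and obtain the companion identity by duality. That is correct in outline, but your narration swaps which computation yields which displayed equation. In the setup you fix (full $x,y$, single $z,w$), the surviving summand is $s_{(p|q);a,b}(x/y)\,\widehat{s}_{(p|q);a,b}(z/w)$, so the coefficient you must compute is the \emph{dual} function at single variables; running $\widehat{s}_{(p|q);a,b}(z/w)=s_{(q|p);b',a'}(w/z)$ through the hook formula produces
\[
(1-a_{p+1}b_{p+1})(z+w)\,\frac{(z|b)^p}{(z;a)^{p+1}}\frac{(w|b')^q}{(w;a')^{q+1}},
\]
which is the coefficient in the \emph{first} identity (consistent with the right-hand side you already matched to the first identity). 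The prefactor $(1-a'_{q+1}b'_{q+1})$ with the powers $\tfrac{(z|a)^p}{(z;b)^{p+1}}\tfrac{(w|a')^q}{(w;b')^{q+1}}$ comes instead from the undualized $s_{(p|q);a,b}(z/w)$ and belongs to the \emph{second} identity. Since you flag the index bookkeeping as the main risk, this is precisely the place where your write-up, taken literally, would put the factors $(1-a_{p+1}b_{p+1})$ and $(1-a'_{q+1}b'_{q+1})$ on the wrong equations; once the two roles are interchanged, the argument coincides with the paper's proof.
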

\begin{proof}
    Let $z,w$ be single variables in the Cauchy identity. Then $\widehat{s}_{\lambda;a,b}(z/w)$ is zero unless $\lambda = (p|q)$ is a hook, in which case, $\widehat{s}_{(p|q);a,b}(z/w)$, which is given explicitly. The second identity follows from the duality.
\end{proof}

The generating series allows us to compute the hook functions iteratively, mimicking the divided difference algorithm for the Newton interpolation problem. 

\begin{example}\label{ex:p0q0}
	Set $z = b_1$ and $w = b_1' = -b_0$ in the generating series. Then we get
	\[
		1+(b_1-b_0)\frac{s_{(0|0);a,b}(x/y)}{1-a_0b_0} = \prod_{i}\frac{1+y_i b_1}{1-x_i b_1}\frac{1-x_i b_0}{1+y_i b_0},
	\]
	or 
	\[
		s_{(1);a,b}(x/y) = s_{(0|0);a,b}(x/y) = \frac{1-a_0b_0}{b_1-b_0}\left(\prod_{i}\frac{1-y_i b_1}{1+x_i b_1}\frac{1-x_i b_0}{1+y_i b_0} - 1\right).
	\]
	We remark that without the second set of parameters $b = (b_i)_{i \in \Z}$, it would not be possible to give the explicit formula for this function. Moreover, the formula makes sense when $b = 0$ as the limit of the right side as $b_0,b_1 \to 0$.
\end{example}

By specializing further, we get generating series for the complete homogeneous functions and the elementary functions (and their duals). 

Let $h_{p+1;a,b} = s_{(p|0);a,b} = s_{(p+1);a,b}$ and $\widehat{h}_{p+1;a,b} = \widehat{s}_{(p|0);a,b} = \widehat{s}_{(p+1);a,b}$ be the complete symmetric functions, and $e_{q+1;a,b} = s_{(0|q);a,b} = s_{(1^{q+1});a,b}$ and $\widehat{e}_{q+1;a,b} = \widehat{s}_{(0|q);a,b} = \widehat{s}_{(1^{q+1});a,b}$ be the elementary symmetric functions. We define $h_{0;a,b} = \widehat{h}_{0;a,b} = e_{0;a,b} = \widehat{e}_{0;a,b} = 1$.

Let $\tau^r$ be an operator that acts on sequences $(a_i)_{i \in \mathbb{Z}}$ by shifting the indices by $\tau^r(a_i) = (a_{i+r})_{i \in \mathbb{Z}}$. We also write $\tau^r s_{\lambda/\mu;a,b}$ for $s_{\lambda/\mu; \tau^r a, \tau^r b}$.

\begin{corollary}[Generating series for complete and elementary functions]\label{lem:genseriesforhande}
    \begin{align}
        1 + \sum_{k=1}^{\infty}h_{k;a,b}(x,y)\frac{1-a_kb_k}{1-a_0b_0}\frac{(z|\tau^{-1} b)^{k}}{(z;a)^{k}} &= \prod_{i}\frac{1+y_iz}{1-x_iz}\frac{1-b_0 x_i}{1 + b_0y_i},\\
        1 + \sum_{k=1}^{\infty}e_{k; a,b}(x,y)\frac{1-a_k b_k}{1-a_1b_1}\frac{(w | (\tau^{-1} b)')^k}{(w ; a')^k} &= \prod_{i}\frac{1+x_iw}{1-y_iw}\frac{1+y_i b_1}{1-x_i b_1},\\
        1 + \sum_{k=0}^{\infty}\widehat{h}_{k; a,b}(z,w)\frac{1-a_{k'}b_{k'}}{1-a_0b_0}\frac{(x | \tau^{-1}a)^k}{(x;b)^{k}} &= \prod_{i}\frac{1-z_ja_0}{1-z_jx}\frac{1+w_j x}{1+a_0w_j},\\
        1 + \sum_{k=0}^{\infty}\widehat{e}_{k;a,b}(z,w)\frac{1-a_{k'}b_{k'}}{1-a_1b_1}\frac{(y|(\tau^{-1}a)')^k}{(y;b')^k} &= \prod_{i}\frac{1+z_jy}{1-w_jy}\frac{1+a_1w_j}{1-a_1z_j}.
    \end{align}
\end{corollary}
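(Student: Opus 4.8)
The plan is to read off the four identities from the two hook generating series of \Cref{lem:genseriesforhooks} by specializing the single variable $z$ or $w$ to a parameter value that annihilates every hook except a single row or a single column, with the duality of \Cref{lem:duality} relating the results in dual pairs. Throughout, $z$ and $w$ are single variables, exactly as in the passage from the Cauchy identity of \Cref{thm:cauchy} to \Cref{lem:genseriesforhooks}.

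First I would treat the complete homogeneous series. In the first identity of \Cref{lem:genseriesforhooks} I set $w = b_1' = -b_0$. Since $(w|b')^q$ carries the factor $w - b_1'$, every term with $q \geq 1$ dies and the double sum collapses onto the single-row hooks $h_{p+1;a,b} = s_{(p|0);a,b}$. The $q=0$ contribution is $1/(w;a')^1 = 1/(1-a_0b_0)$, the prefactor degenerates to $z+w = z - b_0$, and the corner weight $1 - a_{p+1}b_{p+1}$ sits on the \emph{retained} index $p$, so after the reindexing $k = p+1$ it survives as the numerator $1 - a_k b_k$; together these produce the normalization $\tfrac{1-a_kb_k}{1-a_0b_0}$. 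It then remains only to collapse $(z-b_0)(z|b)^{k-1}$ to $(z|\tau^{-1}b)^k$ by the elementary shift identity for $\tau$, and the first series falls out. The elementary series is the mirror specialization $z = b_1$, which instead kills the rows and leaves the single-column hooks $e_{q+1;a,b} = s_{(0|q);a,b}$; the two dual series arise identically from the second (dual) identity of \Cref{lem:genseriesforhooks} upon setting $w = a_1' = -a_0$ and $z = a_1$, respectively, and in each case the specialized right-hand side reduces precisely to the displayed product.

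Equivalently, and as a structural consistency check, the duality $\widehat s_{\lambda/\mu;a,b}(x/y) = s_{\lambda'/\mu';b',a'}(y/x)$ of \Cref{lem:duality} relates the four series in the two dual pairs $\{h,\widehat e\}$ and $\{e,\widehat h\}$: conjugation exchanges the single row $(k)$ with the single column $(1^k)$, so under the simultaneous substitution $(a,b,x,y)\mapsto(b',a',y,x)$ the complete series maps to the dual-elementary series and the elementary series maps to the dual-complete series. This lets one derive each pair from a single direct specialization.

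The only genuine difficulty is bookkeeping rather than anything structural. One must track precisely where the single corner weight $1 - a_{p+1}b_{p+1}$ lies relative to the summation index, since this alone decides whether the surviving normalization constant depends on $k$; and one must manipulate the double shifted powers with care, keeping straight the shift operator $\tau$ and the dual-sequence involution $a \mapsto a'$ (in particular not conflating $\tau^{-1}(b')$ with $(\tau^{-1}b)'$), so that the specialized products such as $(z-b_0)(z|b)^{k-1}$ and their duals collapse to the shifted powers $(z|\tau^{-1}b)^k/(z;a)^k$ exhibited in the statement. Where a closed form is not immediate, the double-factorial geometric-progression identity \eqref{eq:geomprog}, whose summand has exactly the shape $(1-a_{k+1}b_{k+1})(x|a)^k/(x;b)^{k+1}$, can be used to sum the specialized series.
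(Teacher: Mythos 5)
Your proposal is correct and follows essentially the same route as the paper: the paper's proof likewise obtains the first two series by setting $w=-b_0$ and $z=b_1$ in the hook generating series of \Cref{lem:genseriesforhooks}, and the last two by the specializations $y=-a_0$ and $x=a_1$, which coincide with your $w=-a_0$ and $z=a_1$ once the second (dual) hook identity is rewritten in the variable names used in the corollary. Your explicit tracking of the corner factor $1-a_{p+1}b_{p+1}$ and of the distinction between $\tau^{-1}(b')$ and $(\tau^{-1}b)'$ supplies exactly the bookkeeping that the paper's terse proof leaves implicit.
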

\begin{proof}
    We prove the first identity by setting $w = -b_0$ in the generating series for complete homogeneous symmetric functions, as given by the first identity in \Cref{lem:genseriesforhooks}. Using the property $(b_0^{-1}; b')^q = 0$ for $q > 0$, we can simplify the sum to only include $p = 0,1,2,\dots$. 
    
    Similarly, we can obtain the other identities by setting $z = b_1$, $y = -a_0$, and $x = a_1$ in the generating series for elementary homogeneous symmetric functions and complete homogeneous symmetric functions, respectively. Then, we simplify the expressions to obtain the desired identities.
\end{proof}

\appendix

\section{The LGV lemma for the six vertex model}\label{sec:lgvforsv}
We briefly revisit the theory of non-intersecting lattice paths and recall the powerful Lindstr\"om-Gessel-Viennot lemma (the LGV lemma). For a comprehensive treatment of the topic, we refer the reader to \cite{L73,GV85}.

Consider a directed acyclic graph $G$ (a graph with directed edges and with no cycles), in which each directed edge $e \in G$ is assigned a weight $\wt(e)$. For a directed path $P$ between two vertices, we define the weight of the path, $\wt(P)$, as the product of the weights of the edges in the path. For any two vertices $a,b \in G$, we define the sum $e(a,b) = \sum_{P\colon a \to b}\wt(P)$ over all directed paths from $a$ to $b$.

Let $A = (a_1,\dots,a_n)$ and $B = (b_1,\dots,b_n)$ be two $n$-tuples of vertices. We consider an $n$-tuple of non-intersecting paths $(P_1,\dots,P_n)\colon A \to B$, where $P_i\colon a_i \to b_i$. The weight $\wt(P_1,\dots,P_n)$ of the $n$-tuple is defined as the product $\wt(P_1,\dots,P_n) = \prod_{i=1}^{n}\wt(P_i)$ of the weights of the involved paths. Additionally, we impose the restriction that if we fix the starting points $(a_1,\dots,a_n)$, then each path $P_i$ in an $n$-tuple $(P_1,\dots,P_n)$ of non-intersecting paths must end exactly at $b_i$. In other words, there is no $n$-tuple of non-intersecting paths $P_1,\dots,P_n$ such that $P_i\colon a_i \to b_{\sigma(i)}$ for some non-identity permutation $\sigma \in S_n$.

With these conditions in place, we can state the Lindstr\"om-Gessel-Viennot theorem lemma:

\begin{lemma}[The LGV lemma]

The weighted sum of all $n$-tuples $(P_1,\dots,P_n)\colon A \to B$ is equal to a determinant involving only the weights of systems with one path:
\[
    \sum_{(P_1,\dots,P_n)\colon A \to B}\wt(P_1,\dots,P_n) = \det(e(a_i,b_j))_{1\leq i,j \leq n}.
\]
\end{lemma}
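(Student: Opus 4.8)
The plan is to derive this from the Leibniz expansion of the determinant together with a sign-reversing involution that cancels every path system whose paths intersect. First I would expand the right-hand side as
\[
\det(e(a_i,b_j))_{1\le i,j\le n} = \sum_{\sigma\in S_n}\operatorname{sgn}(\sigma)\prod_{i=1}^{n} e(a_i,b_{\sigma(i)}),
\]
and then unfold each factor using $e(a_i,b_{\sigma(i)}) = \sum_{P_i\colon a_i\to b_{\sigma(i)}}\wt(P_i)$. Since the path weight is multiplicative over edges, this rewrites the determinant as the single signed sum
\[
\sum_{\sigma\in S_n}\operatorname{sgn}(\sigma)\sum_{(P_1,\dots,P_n)\colon\, P_i\colon a_i\to b_{\sigma(i)}}\;\prod_{i=1}^{n}\wt(P_i),
\]
where the inner sum now ranges over \emph{all} (possibly intersecting) $n$-tuples of paths realizing the permutation $\sigma$.

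Next I would split this sum according to whether the path system is non-intersecting or not. By the non-permutability hypothesis imposed just before the statement, any non-intersecting system with sources $(a_1,\dots,a_n)$ must realize the identity permutation $\sigma=\mathrm{id}$, so each contributes $+\wt(P_1,\dots,P_n)$; summing these reproduces exactly the left-hand side. It therefore suffices to show that the total contribution of all intersecting systems vanishes.

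The crux is constructing a canonical sign-reversing involution on intersecting systems. Given such a system $(P_1,\dots,P_n)$ in which some pair of paths shares a vertex, I would select the smallest index $i$ such that $P_i$ meets another path, let $v$ be the first vertex along $P_i$ (in its direction) lying on another path, and let $j$ be the smallest index $\neq i$ with $P_j$ passing through $v$. Swapping the tails of $P_i$ and $P_j$ beyond $v$ yields a new system in which $P_i\colon a_i\to b_{\sigma(j)}$ and $P_j\colon a_j\to b_{\sigma(i)}$; this alters $\sigma$ by the transposition $(i\,j)$, flipping $\operatorname{sgn}(\sigma)$, while leaving the total weight unchanged because the multiset of edges used is preserved. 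The main obstacle is verifying that this rule is a genuine involution, i.e. that applying it twice returns the original system, which reduces to checking that the triple $(i,v,j)$ is invariant under the swap. This holds because the initial segment of $P_i$ up to $v$ is unchanged and contains no earlier intersection, so $v$ remains the first intersection point of the new $P_i$ and $i$ remains minimal; acyclicity of $G$ is used here to guarantee that the tails past $v$ are disjoint from these initial segments, so the swap is well-defined and the ``first intersection'' data is intrinsic. With the involution in hand, the intersecting systems cancel in sign-opposite pairs, leaving only the identity-permutation non-intersecting systems, which yields the claimed identity.
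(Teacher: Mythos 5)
Your proof is correct and is the classical Lindstr\"om--Gessel--Viennot argument: Leibniz expansion of the determinant, unfolding each entry as a sum over single paths, and the tail-swapping sign-reversing involution at the first intersection point, with acyclicity invoked exactly where it is needed to make the swap an involution. The paper does not prove this lemma itself---it recalls it and defers to the cited references of Lindstr\"om and of Gessel and Viennot---and your argument is precisely the standard proof given there, including the correct verification that the triple $(i,v,j)$ is preserved by the swap and that the non-permutability hypothesis forces all surviving terms to come from the identity permutation.
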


The LGV lemma is applicable in a vastly more general context, including the possibility of permutations of the paths and more general graphs. However, in this paper, we focus on the most basic and special case.

The six vertex model resembles a model of non-intersecting lattice paths, with the exception that the paths \textit{do} intersect. However, if the weights satisfy certain conditions, it is possible to adjust these intersections to obtain a model of non-intersecting paths without altering the normalized partition function.

We say that the six vertex model is \textit{free fermionic} if the weight functions satisfy the following condition:
\begin{equation}
    \sva_1\sva_2 + \svb_1\svb_2 = \svc_1\svc_2.
\end{equation}

The weights we are working with given by \eqref{eq:ffweights} are free fermionic because 
\begin{equation}
	(1-b x)(y+a) + (1+by)(x-a) = (1-ab)(x+y).
\end{equation}

When computing partition functions, by renormalizing the weights, we can assume that the weights $\sva_1 = 1$ for all vertices. Then the normalized partition function differs from the original partition function only by the product of $\sva_1(v)$ over all vertices $v$ in the model. 

Now, to each six vertex model, we want to associate a system of non-intersecting lattice paths. We do it as follows. We construct the directed graph on top of the grid of the corresponding six vertex model. We place vertices at all positions of vertices of the original grid, and extra vertices in the middle of all edges appearing in the grid for the six vertex model. All original horizontal edges are directed from left to right, and all original vertical edges are directed from top to bottom. Additionally, we add a new diagonal edge connecting the midpoint of the top edge to the midpoint of the right edge and directed from northwest to southeast. See \Cref{fig:svlgvgraph} for the illustration. Then we assign weights to all directed edges in \Cref{fig:nonintersectingweights}, where we give the weight for the highlighted edge. 

\usetikzlibrary{arrows}
\tikzset{edge/.style = {->,> = latex'}}

\begin{figure}
    \centering

    \begin{tikzpicture}[xscale=0.7, yscale=0.7]
    	
        \foreach \x in {1,3,5,7} {
            \draw[edge] (0,\x) -- (16,\x);
            \pgfmathtruncatemacro{\index}{5-(\x+1)/2}
            \node [left] at (0,\x) {$x_\index, y_\index$};
        }
        \foreach \y in {1,3,5,7,9,11,13,15} {
            \draw (\y,0) -- (\y,8);
            \pgfmathtruncatemacro{\index}{(\y+1)/2}
            \node [above] at (\y,8) {$a_\index, b_\index$};
        }
        
        \foreach \x in {1,3,5,7} {
        	\foreach \y in {1,3,5,7,9,11,13,15} {
        		\node[draw, circle, fill=black, inner sep=1pt] at (\y, \x) {};
        		\draw[edge] (\y, \x + 1) -- (\y + 1, \x);
        	}
        }
        
        \foreach \x in {1,3,5,7} {
        	\foreach \y in {1,...,17} {
        		\node[draw, circle, fill=black, inner sep=1pt] at (\y-1, \x) {};
        	}
        }
        
        \foreach \x in {1,3,5,7} {
        	\foreach \y in {1,...,16} {
        		\draw[edge] (\y - 1, \x) -- (\y, \x);
        	}
        }
        
        \foreach \x in {1,...,9} {
        	\foreach \y in {1,3,5,7,9,11,13,15} {
        		\node[draw, circle, fill=black, inner sep=1pt] at (\y, \x-1) {};
        	}
        }
        
        \foreach \x in {1,...,8} {
        	\foreach \y in {1,3,5,7,9,11,13,15} {
        		\draw[edge] (\y, \x) -- (\y, \x - 1);
        	}
        }

        \draw[path] (0,3) -- (1,3) -- (3,3) -- (3,1) -- (7,1) -- (7,0);
        
        \draw[path] (3,8) -- (3,7) -- (5,7) -- (5, 5) -- (7, 5) -- (7, 3) -- (11, 3) -- (11, 0);
        
        \draw[path] (5, 8) -- (6, 7) -- (7, 7) -- (7, 6) -- (8, 5) -- (13, 5) -- (13, 3) -- (15, 3) -- (15, 1) -- (16, 1);
    \end{tikzpicture}
    \caption{A typical state in the acyclic graph that corresponds to the six vertex model. Compare with \Cref{fig:svmodel}.}
    \label{fig:svlgvgraph}
\end{figure}

\begin{figure}[h]
    \centering
    \begin{center}
    \begin{tabular}{ c||c|c|c|c|c } 
    edge & 
    \begin{tikzpicture}[scale=0.8]
        \draw (0,0) to (2,0);
        \draw (1,-1) to (1,1);
        \draw (1,1) to (2, 0);
        \draw[path] (0,0) to (1,0);
    \end{tikzpicture} & 
    \begin{tikzpicture}[scale=0.8]
        \draw (0,0) to (2,0);
        \draw (1,-1) to (1,1);
        \draw (1,1) to (2, 0);
        \draw[path] (1,0) to (1,-1);
    \end{tikzpicture} &
    \begin{tikzpicture}[scale=0.8]
        \draw (0,0) to (2,0);
        \draw (1,-1) to (1,1);
        \draw (1,1) to (2, 0);
        \draw[path] (1,1) to (1,0);
    \end{tikzpicture} &
    \begin{tikzpicture}[scale=0.8]
        \draw (0,0) to (2,0);
        \draw (1,-1) to (1,1);
        \draw (1,1) to (2, 0);
        \draw[path] (1,0) to (2,0);
    \end{tikzpicture} &
    \begin{tikzpicture}[scale=0.8]
        \draw (0,0) to (2,0);
        \draw (1,-1) to (1,1);
        \draw (1,1) to (2, 0);
        \draw[path] (1,1) to (2,0);
    \end{tikzpicture}\\
    \hline
    weight & $\svc_1(v)$ & $1$ & $\svb_1(v)$ & $\svb_2(v)/\svc_1(v)$ & $\sva_2(v)/\svc_1(v)$
    \end{tabular}
    \end{center}
\caption{The weights of edges around the vertex $v$ in the associated graph.}\label{fig:nonintersectingweights}
\end{figure}
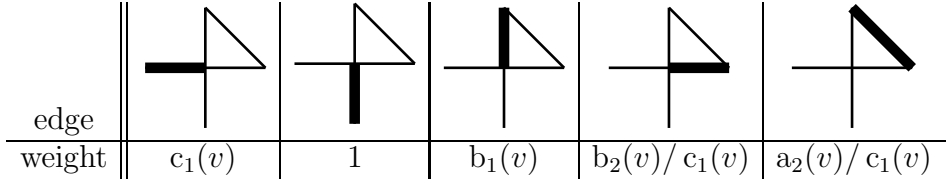

\begin{proposition}[The LGV lemma for the Six Vertex Model]\label{lem:svlgv}
	Consider a free fermionic six vertex model with normalized weights so that $\sva_1(v) = 1$ for all vertices $v$. Let $A_1,A_2,\dots,A_d$ be the positions where paths enter the model counting from the bottom left corner, to the top left corner, and then to the top right corner, and $B_1,B_2,\dots,B_d$ be the positions where paths leave the model, counting from the bottom left corner, to the bottom right corner, and then to the top right corner. The partition function $Z$ of a free fermionic six vertex model is given by the determinant of one-path partition functions:
    \[
        Z = \det\left(Z_{A_i \to B_j}\right),
    \]
    where $Z_{A_i \to B_j}$ is the normalized partition function of the system with one path entering at the position $A_i$ and leaving at the position $B_j$. 
    
    In the case, when the weights are not normalized, we have
	\[
	    \frac{Z}{\prod_{v}\sva_1(v)} = \det\left(\frac{Z_{A_i \to B_j}}{\prod_v \sva_1(v)}\right).
	\]
\end{proposition}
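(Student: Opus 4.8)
The plan is to realize the six vertex partition function $Z$ as a weighted sum over non-intersecting path families in the auxiliary directed graph $G$ of \Cref{fig:svlgvgraph}, and then to invoke the classical LGV lemma stated above. Concretely, it suffices to produce two weight-preserving identifications: single six vertex paths with single paths in $G$, giving $Z_{A_i \to B_j} = e(A_i,B_j)$, and admissible six vertex states with non-intersecting families in $G$, giving $Z = \sum_{(P_1,\dots,P_d)\colon A \to B}\wt(P_1,\dots,P_d)$. The determinant then drops out of the LGV lemma.

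First I would reduce to the normalized case. Dividing the weight of every vertex by $\sva_1$ of its position multiplies each state weight by the state-independent factor $\prod_v \sva_1(v)^{-1}$, since every grid position carries exactly one vertex in every state. Hence it suffices to prove $Z = \det(Z_{A_i \to B_j})$ when $\sva_1(v) = 1$ for all $v$; the unnormalized form is exactly this identity applied to the rescaled weights, because the rescaling divides both $Z$ and every single-path partition function $Z_{A_i \to B_j}$ by the same factor $\prod_v \sva_1(v)$. With this normalization in force the free fermionic condition reads $\sva_2 + \svb_1\svb_2 = \svc_1\svc_2$, and this single relation is the crux of everything that follows.

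Next I would set up the local dictionary between the six vertex types and edge-configurations of $G$ around a vertex $v$, reading the edge weights off \Cref{fig:nonintersectingweights}. The empty vertex $\sva_1$ and the two straight-through vertices $\svb_1,\svb_2$ are immediate, since each admits a unique route through $v$ whose product of graph weights equals the corresponding Boltzmann weight. The two essential cases are the turns and the crossing. At a crossing (type $\sva_2$, all four edges occupied) the two incident paths cannot both pass through the central vertex without meeting there, so in $G$ the configuration is forced to resolve as the left-to-bottom route through the center together with the diagonal top-to-right edge, whose product is exactly $\sva_2$. At a turn, by contrast, a single path may pass through the center or take the diagonal, so $e(A_i,B_j)$ receives a contribution from both routes; summing them and applying $\sva_2 + \svb_1\svb_2 = \svc_1\svc_2$ collapses the two graph routes onto the single turn weight. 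In doing so the roles of $\svc_1$ and $\svc_2$ are interchanged between the two kinds of turn; since the free fermionic condition is symmetric under $\svc_1 \leftrightarrow \svc_2$ and the statement concerns arbitrary free fermionic weights, this relabeling is harmless, and I would record it explicitly rather than fight it. With the dictionary in hand, the diagonal edge is precisely the device that lets every admissible state — crossings included — be represented by a family of paths in $G$ sharing no vertex, so admissible states biject with non-intersecting families $A \to B$ and single paths biject with single paths; the fiber-sum over route choices at each turn then yields the two identities above. Finally, the prescribed ordering of the entry points $A_1,\dots,A_d$ (from the bottom-left up the left side and across the top) and exit points $B_1,\dots,B_d$ (from the bottom-left across the bottom and up the right side) places the two boundary tuples in the planar cyclic order for which every non-intersecting family realizes the identity matching, so the LGV lemma returns the plain determinant $\det(e(A_i,B_j))$ with no permutation sign corrections.

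The main obstacle I anticipate is the turn/crossing bookkeeping: one must check that summing over the two admissible routes at every turn vertex, together with the forced unique resolution at every crossing, reproduces the state weight exactly, and that nothing is double counted when a vertex could a priori be read either as a turn or as part of a crossing. This is exactly where the free fermionic identity is indispensable — for generic weights the two turn-routes would not recombine into a single Boltzmann weight — and it is also where I would take the most care to confirm the global bijection between non-intersecting families in $G$ and admissible states, and to verify the planarity/ordering claim that eliminates the permutation sum in the LGV lemma.
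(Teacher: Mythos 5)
Your proof follows essentially the same route as the paper: build the auxiliary acyclic graph with the extra diagonal edge, check weight preservation vertex type by vertex type (using the free fermionic relation to recombine the two routes at a turn and the forced resolution at a crossing), verify that the boundary ordering forces the identity matching, and invoke the LGV lemma. Your explicit treatment of the $\svc_1\leftrightarrow\svc_2$ relabeling is warranted and correct: the paper's edge-weight table assigns the left-to-bottom turn the weight $\svc_1$ and the two top-to-right routes sum to $\svc_2$, which is exactly the swap of the labels in its own figure of the six types, and your observation that this is harmless (equivalently, one may replace $\svc_1$ by $\svc_2$ in the edge weights) repairs the discrepancy that the paper's own proof glosses over.
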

\begin{proof}

We show that the weighted sum over the non-intersecting lattice paths for the constructed graph is equal to the partition function of the original six vertex model. We demonstrate the equality at the local level of a single vertex. Then the result follows globally. It suffices to show that for each type of vertex in the six-vertex model the associated weights in the graph give the same contribution. The weights of types $\sva_1,\svb_1,\svb_2,\svc_1$ are just mapped to the same weights. Consider a vertex of type $\sva_2$ in the six vertex model which corresponds to the intersection of two paths. In the associated graph, the paths do not intersect:
    \[
        \vcenter{\hbox{\begin{tikzpicture}
            \draw (0,0) -- (2,0);
            \draw (1, -1) -- (1, 1);
            \draw[path] (0,0) -- (2,0);
        	\draw[path] (1,-1) -- (1,1);
        \end{tikzpicture}}} \to
        \vcenter{\hbox{\begin{tikzpicture}[scale=0.8]
            \draw (0,0) to (2,0);
            \draw (1,-1) to (1,1);
            \draw (1,1) to (2, 0);
            \draw[path] (1,1) -- (2, 0);
            \draw[path] (0,0) -- (1, 0) -- (1, -1);
            \draw[edge] (0,0) -- (1,0);
            \draw[edge] (1,0) -- (1, -1);
            \draw[edge] (1,1) -- (2, 0);
        \end{tikzpicture}}}.
    \]
    
    The weight of the vertex of type $\sva_2$ splits into the product of two edge weights: $\svc_1$ and $\sva_2/\svc_1$. Hence, that the weight for vertices of type $\sva_2$ is preserved. Next consider a vertex of type $\svc_2$ in the six vertex model. In the associated graph, there are two possibilities for the path: 
    \[
        \vcenter{\hbox{\begin{tikzpicture}
            \draw (0,0) -- (2,0);
            \draw (1, -1) -- (1, 1);
            \draw[path] (0,0) to (1,0) to (1,-1);
        	\draw (1,1) to (1,0) to (2,0);
        \end{tikzpicture}}} \to
        \vcenter{\hbox{\begin{tikzpicture}[scale=0.8]
            \draw (0,0) to (2,0);
            \draw (1,-1) to (1,1);
            \draw (1,1) to (2, 0);
            \draw[path] (0,0) to (1,0);
            \draw[path] (1,0) to (1,-1);
            \draw[edge] (0,0) -- (1,0);
            \draw[edge] (1,0) -- (1, -1);
            \draw[edge] (1,1) -- (2, 0);
        \end{tikzpicture}}} +  \vcenter{\hbox{\begin{tikzpicture}[scale=0.8]
            \draw (0,0) to (2,0);
            \draw (1,-1) to (1,1);
            \draw (1,1) to (2, 0);
            \draw[path] (1,1) to (2, 0);
            \draw[edge] (0,0) -- (1,0);
            \draw[edge] (1,0) -- (1, -1);
            \draw[edge] (1,1) -- (2, 0);
        \end{tikzpicture}}}.
    \]
    
    Then the six vertex weight $\svc_2$ splits into the sum of $\svb_1 \svb_2/\svc_1$ and $\sva_2/\svc_1$. But by the free-fermionic condition for the normalized weights, we have $c_1c_2 = b_1b_2 + a_2$. Hence, the weight for vertices of type $\svc_2$ is preserved. 	
\end{proof}

\bibliographystyle{alphaurl}
\bibliography{ffschur.bib}

\end{document}